\documentclass[a4paper,12pt,reqno,oneside]{amsart}
\usepackage[utf8x]{inputenc}
\usepackage[T1]{fontenc}
\usepackage{geometry}
\usepackage{lmodern}

\usepackage{amsmath}
\usepackage{amssymb}
\usepackage{amsthm}
\usepackage{enumerate}
\PassOptionsToPackage{hyphens}{url}
\usepackage[bookmarksnumbered,colorlinks,linkcolor=black,citecolor=black,urlcolor=black]{hyperref}

\newcommand{\bA}{\mathbb{A}}
\newcommand{\bC}{\mathbb{C}}

\newcommand{\bG}{\mathbb{G}}
\newcommand{\bN}{\mathbb{N}}
\newcommand{\bP}{\mathbb{P}}
\newcommand{\bQ}{\mathbb{Q}}
\newcommand{\bR}{\mathbb{R}}
\newcommand{\bS}{\mathbb{S}}
\newcommand{\bZ}{\mathbb{Z}}

\newcommand{\Ealg}{\ov{E}}

\newcommand{\Qalg}{\ov\bQ}

\newcommand{\cA}{\mathcal{A}}
\newcommand{\cS}{\mathcal{S}}

\newcommand{\Ag}{\mathcal{A}_g}
\newcommand{\Hg}{\mathcal{H}_g}

\newcommand{\gG}{\mathbf{G}}
\newcommand{\gGL}{\mathbf{GL}}

\newcommand{\gGSp}{\mathbf{GSp}}
\newcommand{\gGSpin}{\mathbf{GSpin}}
\newcommand{\gH}{\mathbf{H}}
\newcommand{\gO}{\mathbf{O}}

\newcommand{\gSO}{\mathbf{SO}}

\newcommand{\gT}{\mathbf{T}}

\newcommand{\rM}{\mathrm{M}}
\newcommand{\rO}{\mathrm{O}}
\newcommand{\rSO}{\mathrm{SO}}
\newcommand{\rOtil}{\widetilde{\rO}}
\newcommand{\rSOtil}{\widetilde{\rSO}}

\DeclareMathOperator{\Aut}{Aut}
\DeclareMathOperator{\Br}{Br}
\DeclareMathOperator{\disc}{disc}
\DeclareMathOperator{\End}{End}
\DeclareMathOperator{\Gal}{Gal}
\DeclareMathOperator{\GL}{GL}
\DeclareMathOperator{\Hom}{Hom}
\DeclareMathOperator{\NS}{NS}
\DeclareMathOperator{\Pic}{Pic}
\DeclareMathOperator{\rk}{rk}
\DeclareMathOperator{\Res}{Res}
\DeclareMathOperator{\Sp}{Sp}
\DeclareMathOperator{\Spec}{Spec}

\newcommand{\abs}[1]{\left\lvert #1 \right\rvert}
\newcommand{\bs}{\backslash}

\newcommand{\defterm}[1]{\textbf{#1}}

\newtheorem{lemma}{Lemma}[section]
\newtheorem{proposition}[lemma]{Proposition}
\newtheorem{theorem}[lemma]{Theorem}
\newtheorem{corollary}[lemma]{Corollary}

\newtheorem{claim}{Claim}

\newtheorem{introtheorem}{Theorem}

\newtheorem{introcorollary}{Corollary}[introtheorem]

\theoremstyle{definition}
\newtheorem*{definition}{Definition}

\newcommand{\Ga}{\Gamma}
\newcommand{\ov}{\overline}
\newcommand{\lra}{\longrightarrow}

\newcommand{\A}{\mathbb{A}}
\newcommand{\C}{\mathbb{C}}
\newcommand{\R}{\mathbb{R}}
\newcommand{\F}{\mathbb{F}}
\newcommand{\Q}{\mathbb{Q}}
\newcommand{\Z}{\mathbb{Z}}

\numberwithin{equation}{section}

\title[Finiteness theorems for K3 surfaces \& abelian varieties of CM type]{Finiteness theorems for K3 surfaces and abelian varieties of CM type}

\author{Martin Orr and Alexei N. Skorobogatov}

\begin{document}

\begin{abstract}
We study abelian varieties and K3 surfaces with complex multiplication
defined over number fields of fixed degree. 
We show that these varieties fall into finitely many isomorphism classes over an algebraic closure of the field of rational numbers.
As an application we confirm finiteness conjectures of Shafarevich and Coleman in the CM case.
In addition we prove the uniform boundedness of the Galois invariant subgroup of the geometric Brauer group for forms of a smooth projective variety satisfying the integral Mumford--Tate conjecture. When applied to K3 surfaces, this affirms a conjecture of Várilly-Alvarado in the CM case.
\end{abstract}

\maketitle

\section{Introduction}

In a recent paper Tsimerman \cite{tsimerman:galois-bound}, 
building on the work of Andreatta, Goren, Howard and Madapusi Pera \cite{AGHM}
and of Yuan and Zhang \cite{YZ} obtained a lower bound for the size of Galois
orbits of CM points, reproduced below as Theorem~\ref{tsimerman-bound}. 
We view this inequality as an upper bound for
the discriminant of the centre of the endomorphism ring of a principally polarised CM abelian variety in terms of the degree of a field of definition. With a little extra work, using 
the classical Masser--W{\"u}stholz bound for the minimal degree of an isogeny between
abelian varieties, and Zarhin's quaternion trick,
we deduce the following result (see Theorem~\ref{g-cm-ab-vars}).

\begin{introtheorem} \label{ThmA}
There are only finitely many \( \Qalg \)-isomorphism classes of abelian varieties 
of CM type of given dimension which can be defined over number fields of given degree.
\end{introtheorem}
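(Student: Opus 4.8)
The plan is to prove the theorem by combining Theorem~\ref{tsimerman-bound} with Zarhin's quaternion trick and the Masser--W\"ustholz isogeny estimate. Let $A$ be an abelian variety of CM type of dimension $g$, defined over a number field $K$ with $[K:\bQ]=d$. The first step is to pass to $B:=(A\times A^\vee)^4$, which is again of CM type, has dimension $8g$, is defined over $K$, and, by Zarhin's trick, carries a principal polarisation over $K$. Thus $B$, with this polarisation, defines a $K$-rational CM point of $\mathcal{A}_{8g}$, so its $\Gal(\Qalg/\bQ)$-orbit has at most $d$ elements; comparing with the lower bound of Theorem~\ref{tsimerman-bound} yields an upper bound, depending only on $g$ and $d$, for $\abs{\disc(R_B)}$, where $R_B$ denotes the centre of the geometric endomorphism ring $\End(B_{\Qalg})$.

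From this one first bounds the isogeny class of $A_{\Qalg}$. The endomorphism algebra $\End(B_{\Qalg})\otimes\bQ$ has centre $Z$, a product of CM fields of total degree at most $2\dim B=16g$, and the maximal order $\mathcal{O}_Z$ of $Z$ contains $R_B$, so $\abs{\disc(\mathcal{O}_Z)}$ divides $\abs{\disc(R_B)}$ and is therefore bounded in terms of $g$ and $d$. By the Hermite--Minkowski theorem there are only finitely many possibilities for $Z$; for each of these, the CM fields of the simple factors of $B_{\Qalg}$, their CM types and their multiplicities range over a finite set, so the isogeny class of $B_{\Qalg}$ — and hence that of $A_{\Qalg}$, which is an isogeny factor of $B_{\Qalg}$ — lies in a finite set determined by $g$ and $d$. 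Fix once and for all a representative $A_0$ of each of these finitely many isogeny classes; each $A_0$ can be chosen over a number field whose degree is bounded in terms of $g$.

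It remains to bound the number of isomorphism classes within a fixed isogeny class, and here the crucial point is that the stable Faltings height $h(A)$ is bounded in terms of $g$ and $d$. The Faltings height of a CM abelian variety is bounded above in terms of its dimension and the absolute discriminant of the centre of its endomorphism ring — this follows from the averaged Colmez conjecture of \cite{AGHM} and \cite{YZ} (which also underlies Theorem~\ref{tsimerman-bound}) together with the classical fact that Faltings heights are bounded below — so $h(B)\ll_{g,d}1$, whence $h(A)\ll_{g,d}1$ because $h(B)=4(h(A)+h(A^\vee))$ and Faltings heights are bounded below. Since $A$ is $\Qalg$-isogenous to the fixed $A_0$, choosing a common field of definition for $A$ and $A_0$ of degree $\ll_{g,d}1$ and applying the Masser--W\"ustholz isogeny theorem produces an isogeny between $A$ and $A_0$ of degree bounded solely in terms of $g$ and $d$.

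Finally, an abelian variety isogenous to the fixed $A_0$ by an isogeny of degree at most a given $N$ is, up to $\Qalg$-isomorphism, the quotient of $A_0$ by one of its finitely many finite subgroups of order at most $N^{2g}$; hence $A_{\Qalg}$ lies in one of finitely many isomorphism classes, and, as $A$ ranged over all abelian varieties of CM type of dimension $g$ definable over number fields of degree $d$, this proves the theorem. (One can instead bypass the Masser--W\"ustholz step, since the bound on $\abs{\disc(R_B)}$ also forces the endomorphism ring of each simple factor of $A_{\Qalg}$ to be an order of bounded conductor in one of finitely many CM fields, and for a fixed CM field, CM type and order there are only finitely many CM abelian varieties up to $\Qalg$-isomorphism.) The essential inputs are Theorem~\ref{tsimerman-bound} and the Faltings-height bound for CM abelian varieties on which it rests; granting these, the main point requiring care is the bookkeeping that transfers a bound on the endomorphism ring of the auxiliary variety $(A\times A^\vee)^4$ back to $A$, while the remaining ingredients — Zarhin's trick, Hermite--Minkowski, Masser--W\"ustholz and the counting of subgroups of $A_0$ — are essentially formal.
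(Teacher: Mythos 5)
Your overall strategy is close to the paper's, and most of the bookkeeping (Zarhin's trick to produce a principally polarised $B=(A\times A^\vee)^4$, applying Theorem~\ref{tsimerman-bound} to $B$, Hermite--Minkowski to pin down the CM fields, and then primitive CM types to control the isogeny class of $A_{\Qalg}$) is sound and parallels Propositions~\ref{finiteness-cm-ab-vars-isogeny} and~\ref{g-cm-pp-ab-vars}. The difference is the order of operations: you apply Zarhin's trick once at the start and then try to finish by running Masser--W\"ustholz directly on $A$; the paper instead proves finiteness of \emph{principally polarised} CM abelian varieties first and only applies Zarhin's trick at the very end.

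This reordering creates a genuine gap at the Masser--W\"ustholz step. The isogeny estimate in \cite{mw:isogeny-avs}, as used in the paper, bounds the degree of an isogeny $A_0\to A$ by a power of $\max\bigl(1, h_F(A_0), [K:\bQ], \delta(A), \delta(A_0)\bigr)$, where $\delta$ is the minimal degree of a polarisation. You control $h_F$ and $[K:\bQ]$, and you can arrange $\delta(A_0)$ to be bounded by choosing $A_0$ from a finite set, but you do not control $\delta(A)$: the abelian variety $A$ need not admit any polarisation of bounded degree, and bounding $\delta(A)$ in advance is essentially the content of the theorem you are trying to prove. The paper avoids this by applying Masser--W\"ustholz only to \emph{principally polarised} abelian varieties (so $\delta=1$), deducing finiteness for the $(A\times A^\vee)^4$'s, and then invoking \cite[Theorem~18.7]{milne:abelian-varieties-old} (a principally polarised abelian variety has only finitely many direct factors up to isomorphism) to recover finiteness for the $A$'s. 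The fix for your argument is exactly that: apply Masser--W\"ustholz to $B$ and to $B_0=(A_0\times A_0^\vee)^4$, both principally polarised with bounded Faltings height, then use Milne's finiteness of direct factors.

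Your parenthetical alternative (bypassing Masser--W\"ustholz by arguing that a bound on $\lvert\disc(R_B)\rvert$ forces each $\End(A_i)$ to be an order of bounded conductor) is also not justified as stated. The centre $R_B$ of $\End(B_{\Qalg})$ is \emph{an} order in $\prod F_i$, but it is not identified with $\prod\End(A_i)$: since $B_{\Qalg}$ is only isogenous, not isomorphic, to $\prod A_i^{m_i}$, the integral structure of $\End(B_{\Qalg})$ and its centre need not reflect the conductors of the individual $\End(A_i)$. Controlling the isogeny class (as you and the paper both do) is straightforward, but controlling the endomorphism ring of each simple factor from $\disc(R_B)$ alone would require a separate argument. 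This is precisely why the paper goes through Masser--W\"ustholz rather than trying to bound the endomorphism rings directly.

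A minor remark: you invoke a Faltings-height bound for CM abelian varieties derived from the averaged Colmez conjecture, but this is not needed. Once the isogeny class of $B$ is confined to a finite set, one can simply fix a representative $B_0$ in each class and use $h_F(B_0)$ in the Masser--W\"ustholz estimate (applied with the roles so that only one height appears), which is exactly what the paper does.
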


Robert Coleman conjectured
that for a given number field $k$ only finitely many rings, up to isomorphism,
can be realised as the ring of $\Qalg$-endomorphisms of an abelian variety
defined over $k$ (see Remark 4 at the end of \cite{shafarevich:k3-conj} and 
conjecture C$(e,g)$ in \cite[p.~384]{BFGR}). For abelian varieties of CM type this 
conjecture follows from Theorem~\ref{ThmA}. 

\begin{introcorollary}
Let $g$ and $d$ be positive integers.
There are only finitely many rings $R$, up to isomorphism, for which
there exists an abelian variety $A$ of CM type over a number field of degree $d$ 
such that $\dim(A)=g$ and $\End(A\times_k\Qalg)\cong R$.
\end{introcorollary}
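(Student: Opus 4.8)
The plan is to deduce the corollary as a purely formal consequence of Theorem~\ref{ThmA}. The crucial — and essentially only — point is that the isomorphism class of the ring $\End(A\times_k\Qalg)$ is an invariant of the $\Qalg$-isomorphism class of the abelian variety $A\times_k\Qalg$: if $\varphi\colon A\times_k\Qalg\to B$ is an isomorphism of abelian varieties over $\Qalg$, then $f\mapsto\varphi\circ f\circ\varphi^{-1}$ is a ring isomorphism $\End(A\times_k\Qalg)\to\End(B)$. In particular this ring does not depend on the chosen model of $A$ over $k$, nor on $k$ itself; recall that since $k$ is a number field we may regard $\Qalg$ as an algebraic closure of $k$, so that $\End(A\times_k\Qalg)$ is the full geometric endomorphism ring.

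Now let $A$ be an abelian variety of CM type over a number field $k$ of degree $d$ with $\dim(A)=g$. Then $A$ is in particular an abelian variety of CM type of dimension $g$ which can be defined over a number field of degree $d$, so Theorem~\ref{ThmA} applies and provides a finite list $B_1,\dots,B_n$ of abelian varieties over $\Qalg$, depending only on $g$ and $d$, such that $A\times_k\Qalg$ is $\Qalg$-isomorphic to one of the $B_i$. By the observation above, $\End(A\times_k\Qalg)$ is then isomorphic as a ring to one of $\End(B_1),\dots,\End(B_n)$. Hence every ring $R$ arising as in the statement is isomorphic to a member of this finite list, which proves the corollary.

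There is no genuine obstacle here beyond Theorem~\ref{ThmA} itself: once the finiteness of $\Qalg$-isomorphism classes is in hand, the passage to endomorphism rings is immediate, since the endomorphism ring is taken after base change to $\Qalg$ and is therefore insensitive both to twisting and to the field of definition. The entire substance of the argument is thus contained in Theorem~\ref{ThmA}.
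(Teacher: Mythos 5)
Your proof is correct and matches the paper's (implicit) reasoning: the paper simply notes that the corollary "follows from Theorem~\ref{ThmA}," precisely because $\End(A\times_k\Qalg)$ is an invariant of the $\Qalg$-isomorphism class, as you observe. Nothing further is needed.
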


Under the generalised Riemann hypothesis 
this had been proved by R.\ Greenberg, see \cite[p.~384]{BFGR}.

Using Theorem~\ref{ThmA} we show that
any Shimura variety of abelian type has only finitely many CM points 
defined over number fields of bounded degree,
see Proposition~\ref{finiteness-cm-in-abelian-type}.
By the work of Rizov and Madapusi Pera,
a double cover of the moduli space of K3 surfaces with a polarisation of fixed degree 
is an open subset of a Shimura variety of abelian type.
Hence we get a finiteness theorem for K3 surfaces of CM type with a polarisation of fixed degree defined over number fields of bounded degree.

Using an idea suggested to us by Fran\c{c}ois Charles we 
develop a lattice-theoretic device that plays the role of Zarhin's trick for K3 surfaces,
related to the method of \cite{charles:k3-zarhin}.
The analogy with Zarhin's trick becomes apparent when stated in terms of Shimura varieties,
see Section~\ref{ssec:comparison-zarhin-trick}.
This allows us to remove the dependence on the degree
of polarisation and prove the following main result of this paper
(Theorem~\ref{finiteness-cm-k3}).

\begin{introtheorem} \label{ThmB}
There are only finitely many \( \Qalg \)-iso\-morphism classes of K3 surfaces of CM type 
which can be defined over number fields of given degree.
\end{introtheorem}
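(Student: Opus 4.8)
The plan is to reduce Theorem~\ref{ThmB} to Theorem~\ref{ThmA} by passing through the Kuga--Satake construction, which attaches to a polarised K3 surface of CM type an abelian variety of CM type whose dimension is controlled, and then to eliminate the dependence on the polarisation degree by the lattice-theoretic substitute for Zarhin's trick alluded to in the introduction. First I would recall that if $X$ is a K3 surface over a number field $k$ of degree $d$, then the transcendental part of $H^2$ carries a polarised Hodge structure of K3 type whose Mumford--Tate group, in the CM case, is a torus; the associated Kuga--Satake abelian variety $A$ is then of CM type, and one checks that $A$ can be defined over a bounded extension of $k$ (hence over a number field of bounded degree) using the descent results of Rizov and Madapusi Pera that realise the relevant Shimura variety as a moduli space. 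The subtlety is that the dimension of the Kuga--Satake variety grows with the rank of the relevant lattice, which in turn depends on the degree of the polarisation: a naive bound only gives finiteness for K3 surfaces of CM type carrying a polarisation of \emph{fixed} degree, which is exactly the intermediate statement Proposition~\ref{finiteness-cm-k3} is said to improve upon.

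Next I would carry out the removal of the polarisation degree. The idea attributed to Charles is to embed the transcendental lattice $T(X)$, or rather the whole second cohomology lattice, into a lattice of bounded rank by an auxiliary construction — tensoring with a fixed hyperbolic or $E_8$-type lattice, or taking an orthogonal direct sum designed so that the resulting Hodge structure is again of K3 type but now lives in an ambient space of rank independent of the degree, analogously to how Zarhin's trick replaces $A$ by $(A\times A^\vee)^4$ to obtain a principal polarisation. Concretely: given $X$ of CM type over $k$ of degree $d$, produce from it a canonically defined point on a single Shimura variety $\Sh_{\gSO(2,n)}$ (for a fixed $n$) that is still a CM point and is still defined over a number field whose degree is bounded in terms of $d$; apply the already-established finiteness of CM points of bounded degree on a fixed Shimura variety of abelian type (Proposition~\ref{finiteness-cm-in-abelian-type}, which rests on Theorem~\ref{ThmA}) to conclude that there are finitely many such points; and finally show that the original K3 surface $X$ can be recovered, up to finitely many choices and up to $\Qalg$-isomorphism, from this point together with its polarisation class inside $\NS(X_{\Qalg})$. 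For the last recovery step one uses the Torelli theorem for K3 surfaces (the Hodge-theoretic statement plus its arithmetic refinement), so that the $\Qalg$-isomorphism class of $X$ is determined by the Hodge structure on $H^2(X_{\Qalg},\Z)$ together with the intersection form and the ample cone, all of which are encoded in finitely much data once the Shimura datum point is fixed.

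The main obstacle, as I see it, is precisely the decoupling of the polarisation degree from the dimension of the period domain while keeping track of fields of definition. Two points need care. The first is that the Kuga--Satake variety and the Shimura-variety point must be defined over an extension of $k$ of degree bounded \emph{independently of $X$ and of the polarisation degree}: this requires a uniform statement about the degree of the field over which the relevant level structure or the Kuga--Satake abelian variety descends, which is where the lattice-theoretic trick has to be engineered so that it is compatible with Galois descent. The second is the converse bookkeeping: from a CM point on the fixed Shimura variety one must bound the number of K3 surfaces mapping to it, i.e.\ control the fibres of the period map including the choice of polarisation inside the Néron--Severi lattice and the action of the orthogonal group of that lattice; since $\NS(X_{\Qalg})$ has rank at most $20$ this is a finite problem, but making it uniform over all $X$ of bounded degree of definition requires knowing that the discriminant of $\NS(X_{\Qalg})$ is bounded, which should follow from Theorem~\ref{ThmA} applied to the Kuga--Satake variety (the endomorphism algebra, hence the CM field, hence the transcendental lattice, is constrained) together with the relation $\NS \oplus T = H^2$ up to finite index. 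Once these two uniformity inputs are in place, the finiteness of $\Qalg$-isomorphism classes of K3 surfaces of CM type over number fields of degree $d$ follows formally.
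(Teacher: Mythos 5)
Your high-level outline matches the paper's: pass to orthogonal Shimura varieties, use Proposition~\ref{finiteness-cm-in-abelian-type} (whose proof does go through Proposition~\ref{g-cm-pp-ab-vars} and hence Tsimerman's bound, though not via Theorem~\ref{ThmA} directly), build a single target Shimura variety $S_\#$ of fixed rank receiving all the degree-$2d$ moduli spaces, and then bound the fibres of the resulting map. But at the two decisive steps your account is either vague or heads in a direction the paper shows is unnecessary and that you do not actually justify.

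First, the construction of the fixed Shimura variety. You propose ``tensoring with a fixed hyperbolic or $E_8$-type lattice, or taking an orthogonal direct sum designed so that the resulting Hodge structure is again of K3 type.'' Tensoring changes the rank and destroys the K3 shape of the Hodge structure, and a naive direct sum $\Lambda_{2d}\oplus M$ does not fix the rank independently of the input unless $\Lambda_{2d}$ itself is embedded. What is actually needed is a \emph{primitive} embedding $\Lambda_{2d}\hookrightarrow\Lambda_\#$ of the degree-$2d$ period lattice (signature $(2,19)$, discriminant group cyclic) into a single even unimodular lattice of signature $(2,26)$, uniformly in $d$. This exists by Nikulin's embedding theorem (Theorem~\ref{nikulin:lattice-embeddings}), and primitivity is essential: it is what makes the map $r_{2d}\colon\gSO(\Lambda_{2d})_\bQ\to\gSO(\Lambda_\#)_\bQ$ well-defined (acting trivially on the orthogonal complement), makes the induced $f_{2d}\colon S_{2d}\to S_\#$ finite, and is used again when you later need to recover the transcendental lattice from the period point in $S_\#$. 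Your sketch does not isolate this, and the variants you suggest would not deliver it.

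Second, the fibre-finiteness step, which is the heart of the argument. A single CM point of $S_\#$ can lie in the image of $f_{2d}$ for infinitely many $d$ (and genuinely does — any CM K3 has polarisations of unbounded degree). The paper resolves this by showing (Lemma~\ref{transcendental-isometry}) that if two polarised K3s map to the same point of $S_\#$, their \emph{transcendental lattices} are Hodge isometric — here primitivity of $\iota_{2d}$ and the rank-one property of the $(-1,1)$-piece are used to identify $T(X)$ intrinsically inside $\Lambda_\#$ — and then invoking Bridgeland–Maciocia (Proposition~\ref{finiteness-fm}): only finitely many K3 surfaces have transcendental lattice in a given Hodge isometry class. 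You do not identify either of these inputs. Instead you propose to bound $\disc(\NS(\ov X))$ by applying Theorem~\ref{ThmA} to the Kuga–Satake variety and reading the transcendental lattice off from its endomorphism ring. This is not established: the endomorphism algebra of the Kuga–Satake variety sees $T(X)_\bQ$ as a rational quadratic space (via the Clifford algebra), not $T(X)$ as an integral lattice, so it does not directly control $\disc(T(X))$ or $\disc(\NS(\ov X))$. Moreover the boundedness of $\disc(\NS(\ov X))$ is precisely Shafarevich's conjecture, which the paper obtains as Corollary~\ref{corB1} \emph{from} Theorem~\ref{ThmB}; using it as an input here risks circularity unless you give an independent proof. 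Without Lemma~\ref{transcendental-isometry} and the Bridgeland–Maciocia finiteness (or a genuine replacement for them), the argument does not close.
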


Recall that a K3 surface has CM type if its Mumford--Tate group is commutative.
K3 surfaces of CM type were introduced by Piatetski-Shapiro and Shafarevich
in \cite{PSS73}. They proved that such a surface is always defined over a number field
\cite[Theorem 4]{PSS73}.
Examples of K3 surfaces of CM type
are diagonal quartic surfaces in $\mathbb P^3$ and, more generally, arbitrary K3 surfaces
of Picard rank 20, as well as the Kummer surfaces attached to 
simple abelian surfaces of CM type. In the latter case the Picard rank is 18.
Taelman proved that there exist K3 surfaces of CM type over $\C$
with arbitrary even Picard rank from 2 to 20 \cite{Ta}.

Let us understand by a lattice a free abelian group of finite rank with
an integral symmetric bilinear form. Shafarevich conjectured that
only finitely many lattices, up to isomorphism,
can be realised as the N\'eron--Severi lattice of a K3 surface defined over 
a number field of fixed degree \cite{shafarevich:k3-conj}. 
Equivalently, the discriminants of the N\'eron--Severi lattices of such K3 surfaces
are bounded.
Theorem~\ref{ThmB} confirms this conjecture in the case of K3 surfaces of CM type.

\begin{introcorollary} \label{corB1}
Only finitely many lattices, up to isomorphism,
can be realised as the N\'eron--Severi lattice $\NS(\ov X)$ where $X$ is a K3 surface of CM type defined over 
a number field of given degree.
\end{introcorollary}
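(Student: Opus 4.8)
The plan is to deduce Corollary~\ref{corB1} from Theorem~\ref{ThmB} together with two standard facts: first, that for a K3 surface $X$ over a number field $k$ with algebraic closure $\Kalg$, the N\'eron--Severi lattice $\NS(\ov X)$ depends only on the $\Qalg$-isomorphism class of $X$ (after choosing an embedding $\Kalg \hookrightarrow \bC$, one has $\NS(\ov X) \cong \NS(X_{\bC})$, and $X_{\bC}$ is determined up to isomorphism by the $\Qalg$-isomorphism class of $X$); and second, that the N\'eron--Severi lattice of a complex K3 surface is a lattice of rank at most $22$, so there are only finitely many possibilities once we fix the $\Qalg$-isomorphism class. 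Thus a bound on the number of $\Qalg$-isomorphism classes immediately yields a bound on the number of N\'eron--Severi lattices.

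\textbf{Proof.} Fix a positive integer $d$, and let $X$ be a K3 surface of CM type defined over a number field $k$ with $[k:\bQ]=d$. Choose an embedding of the algebraic closure $\Kalg$ of $k$ into $\bC$. The N\'eron--Severi group is a birational and base-change-compatible invariant: the natural map $\NS(\ov X) \to \NS(X_{\bC})$ is an isomorphism of lattices, because $\ov X$ and $X_{\bC}$ are smooth projective surfaces with $\ov X_{\bC} = X_{\bC}$ and the cycle class map together with the Lefschetz $(1,1)$-theorem shows no new divisor classes appear over $\bC$. Moreover, if $X$ and $X'$ are K3 surfaces over number fields of degree $d$ that become isomorphic over $\Qalg$, then $X_{\bC}$ and $X'_{\bC}$ are isomorphic over $\bC$, hence $\NS(\ov X) \cong \NS(\ov{X'})$ as lattices. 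Therefore the isomorphism class of the lattice $\NS(\ov X)$ is a function of the $\Qalg$-isomorphism class of $X$ alone.

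By Theorem~\ref{ThmB}, there are only finitely many $\Qalg$-isomorphism classes of K3 surfaces of CM type definable over number fields of degree $d$. Composing the (finite) set of these classes with the map sending a class to the isomorphism class of its N\'eron--Severi lattice, we conclude that only finitely many lattices arise as $\NS(\ov X)$ for such $X$. \qed

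The only genuine input is Theorem~\ref{ThmB}; the rest is the observation that $\NS(\ov X)$ factors through the $\Qalg$-isomorphism class, which in turn rests on the invariance of N\'eron--Severi under extension of algebraically closed base fields. There is no real obstacle here, the statement being a formal corollary; the one point that deserves a word is the compatibility of $\NS$ with base change to $\bC$, which is why the proof mentions the Lefschetz $(1,1)$-theorem explicitly. For the final remark about discriminants, note that a lattice of bounded rank and bounded discriminant lies in finitely many isomorphism classes (Minkowski), and conversely finitely many lattices have bounded rank and discriminant, so the two formulations in the statement are equivalent; since $\NS(\ov X)$ always has rank between $1$ and $20$ for a K3 surface, boundedness of the discriminant is indeed equivalent to finiteness of the set of isomorphism classes.
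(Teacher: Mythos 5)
Your proposal is correct and takes the same (essentially unique) route as the paper: the paper presents Corollary~\ref{corB1} as an immediate consequence of Theorem~\ref{ThmB} with no separate argument, and your observation that $\NS(\ov X)$ is an invariant of the $\Qalg$-isomorphism class of $X$ (via invariance of the N\'eron--Severi lattice under extension of algebraically closed fields) is exactly the point that makes the deduction formal. One small slip: you first say the N\'eron--Severi lattice has rank at most $22$ (that is $b_2$), but you correct this to the actual bound $20$ for a complex K3 surface in the final sentence.
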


In \cite{shafarevich:k3-conj} Shafarevich proved this conjecture for K3 surfaces of 
Picard rank 20. In fact, he proved that K3 surfaces of Picard rank 20
that can be defined over a number field of fixed degree fall into finitely many isomorphism
classes over~$\Qalg$. This paper was born from our reflections on this theorem
of Shafarevich.

In contrast to the above results, the following statement does not 
use the assumption of complex multiplication. The main ingredients of its proof 
are the results of Cadoret and Moonen on the Mumford--Tate conjecture
\cite[Theorems A and B]{CaMo}, which build
on the previous work of many authors, including Serre, Wintenberger, Larsen, Pink, Cadoret and Kret,
and the proof of the Mumford--Tate conjecture for K3 surfaces by Tankeev and Andr\'e.

\begin{introtheorem} \label{ThmC}
Let $k$ be a field finitely generated over $\Q$.
Let $X$ over $k$
be either an abelian variety satisfying the Mumford--Tate conjecture or a K3 surface.
For each positive integer $n$, there exists a constant $C = C_{n,X}$ such that for every
$(\bar k/L)$-form $Y$ of $X$
defined over a field extension~$L$ of degree~$[L:k]\leq n$,
we have
\( \lvert \Br(\ov Y)^{\Gal(\bar k/L)}\rvert < C \).
\end{introtheorem}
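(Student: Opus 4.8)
The plan is to prove Theorem~\ref{ThmC} by reducing the uniform boundedness of $\Br(\ov Y)^{\Gal(\bar k/L)}$ to a statement about the $\ell$-adic realisations and the Galois representation, and then controlling everything in terms of the Mumford--Tate conjecture, which by hypothesis (or by the theorem of Tankeev and André in the K3 case) holds for $X$ and hence for all its forms. Recall that for a smooth projective variety $Y$ over a field $L$ of characteristic $0$, the geometric Brauer group $\Br(\ov Y)$ fits into the exact sequence
\[
0 \lra (\Q/\Z)^{b_2 - \rho(\ov Y)} \lra \Br(\ov Y) \lra \bigoplus_\ell H^3(\ov Y, \Z_\ell(1))_{\mathrm{tors}} \lra 0,
\]
where $b_2$ is the second Betti number and $\rho(\ov Y)$ the geometric Picard number; in the abelian variety and K3 cases $H^3(\ov Y,\Z_\ell(1))$ is torsion-free (for K3 surfaces this is classical; for abelian varieties it follows from the structure of cohomology as an exterior algebra over $\Z_\ell$), so $\Br(\ov Y) \cong (\Q/\Z)^{b_2-\rho(\ov Y)}$ as an abstract group. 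Thus the Galois-invariant subgroup $\Br(\ov Y)^{\Gal(\bar k/L)}$ sits inside $(\Q/\Z)^{b_2-\rho}$ and its finiteness and size are governed by how the Galois group acts on the transcendental part of $H^2_{\text{ét}}(\ov Y,\Z_\ell(1))$.

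The key step is then the following: there is a Galois-equivariant inclusion
\[
\Br(\ov Y)[\ell^\infty]^{\Gal(\bar k/L)} \hookrightarrow H^2_{\text{ét}}(\ov Y,\Z_\ell(1)) \big/ \bigl(\NS(\ov Y)\otimes\Z_\ell + (\text{divisible part})\bigr),
\]
more precisely one shows $|\Br(\ov Y)[\ell^\infty]^{G_L}|$ is bounded by the size of the cokernel of the cycle class map on $\ell$-adic cohomology together with the coinvariants of the Galois action on the transcendental lattice $T_\ell := (\NS(\ov Y)\otimes\Z_\ell)^{\perp} \subset H^2_{\text{ét}}(\ov Y,\Z_\ell(1))$. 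So it suffices to bound, uniformly over all forms $Y/L$ with $[L:k]\le n$ and uniformly over $\ell$, the order of the torsion subgroup of $(T_\ell)_{G_L}$, i.e. of the coinvariants. Here is where the Mumford--Tate conjecture enters: the image of $G_L$ in $\mathrm{GL}(T_\ell)$ is, by MTC for $X$ (applied to the form $Y$, whose Mumford--Tate group agrees with that of $X$ since they become isomorphic over $\bar k$), an open subgroup of $\mathrm{MT}(X)(\Q_\ell)$ intersected with the relevant $\Z_\ell$-structure; and the integral refinement — the key technical point borrowed from Cadoret--Moonen \cite{CaMo}, Theorems A and B — gives that this image contains a subgroup of index bounded independently of $\ell$ and of the form $Y$. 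Combining this with the fact that $\mathrm{MT}(X)$ acts on $T_\ell\otimes\Q_\ell$ without nonzero fixed vectors over the transcendental part (the invariants are exactly the algebraic classes), one concludes that $(T_\ell)_{G_L}$ has torsion of order bounded by a constant depending only on $n$ and $X$, and is torsion-free for all but finitely many $\ell$.

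I would structure the argument as: (1) recall the Brauer group exact sequence and reduce to bounding $|\Br(\ov Y)[\ell^\infty]^{G_L}|$ summed over $\ell$, noting it vanishes for $\ell$ outside a controlled finite set once the Galois image is large; (2) express this order in terms of the Tate module $T_\ell$ and its Galois coinvariants via the Kummer sequence and a comparison of the cycle class map integrally; (3) invoke the (integral) Mumford--Tate conjecture, in the uniform form of Cadoret--Moonen, to bound the index of the Galois image in the $\Z_\ell$-points of the Mumford--Tate group, uniformly in $Y$ and $\ell$ — here one uses that a form $Y$ of $X$ over $L$ corresponds to a $k$-point of a Shimura-type variety lying over that of $X$, or more directly that $\mathrm{MT}(\ov Y)=\mathrm{MT}(\ov X)$ and the relevant constants in \cite{CaMo} depend only on this group and on $[L:k]$; (4) deduce that the coinvariants have bounded torsion and assemble the constant $C_{n,X}$. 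The main obstacle I anticipate is step (2)–(3): making the passage from ``$G_L$ has image of bounded index in $\mathrm{MT}(\Z_\ell)$'' to ``$(T_\ell)_{G_L}$ has torsion of bounded order'' genuinely uniform in $\ell$ — one must rule out, with an $\ell$-independent bound, the accumulation of small torsion in coinvariants caused by the finitely many ``bad'' primes where the image is not hyperspecial, and ensure the index bound from \cite{CaMo} really is independent of the form $Y$ and not just of $L$; this requires carefully tracking that all the constants in the Cadoret--Moonen machinery depend only on the generic fibre data (hence on $X$) and on the degree $n$, not on the particular twist.
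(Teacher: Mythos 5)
Your overall strategy — reduce to bounding $\ell$-primary torsion, control Galois invariants of the transcendental quotient via the Mumford--Tate group, and invoke the integral Mumford--Tate conjecture from Cadoret--Moonen — is broadly in the spirit of the paper's argument, and the pieces you assemble for a \emph{fixed} variety are essentially the paper's Claims 1 and 2 in Proposition~\ref{5.3}. However, there is a genuine gap at precisely the point you yourself flag: you need the index bound coming from the integral Mumford--Tate conjecture to be uniform over all forms $Y$ of $X$, and you do not explain why such uniformity holds. The integral Mumford--Tate conjecture is a statement about a single variety; forms of $X$ over varying $L$ have the same Mumford--Tate group after base change to $\bar k$, but the Galois representations on $H^2_{\text{\'et}}(\ov Y,\Z_\ell(1))$ differ by a twist, and there is no direct result in \cite{CaMo} giving a twist-independent index bound. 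Asserting that ``the relevant constants depend only on the generic fibre data'' is precisely what needs proof.

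The paper closes this gap by a separate reduction step (Proposition~\ref{5.2}) that your outline lacks. The key observation is that the $\Ga_L$-module $T_\ell(\Br(\ov Y))$ is the twist of $T_\ell(\Br(\ov X))$ by a continuous $1$-cocycle $\tilde c \colon \Ga_L \to A(X)$, where $A(X)$ is the image of $\Aut(\ov X)$ acting on $H = H^2(X_\bC,\Z(1))/\text{tors}$. Since $A(X)$ embeds in $\GL_{b_2}(\Z)$, after replacing $L$ by a finite extension of bounded degree the cocycle becomes a homomorphism with finite image, and Minkowski's theorem \cite{minkowski:finite-subgroups} on finite subgroups of $\GL_{b_2}(\Z)$ bounds the order of that image by a constant $m$ depending only on $b_2$, hence only on $X$. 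Thus over an extension $k'/L$ of degree at most $m$, the $\Ga_{k'}$-modules $\Br(\ov Y)_{\rm div}$ and $\Br(\ov X)_{\rm div}$ become isomorphic, and one is reduced to bounding $\lvert \Br(\ov X)^{\Ga'}\rvert$ over subgroups $\Ga' \subset \Ga_k$ of bounded index — a question purely about $X$, to which Cadoret--Moonen applies directly. Without this reduction your step (3) does not close; with it, one never needs to track Mumford--Tate constants across twists at all. A secondary point: the vanishing of $\Br(\ov X)_{\rm div}[\ell]^{\Ga'}$ for all large $\ell$, which you treat as automatic once the Galois image is large, is itself a nontrivial geometric argument in the paper (semicontinuity of stabiliser dimension for the Mumford--Tate group scheme over $\Z$, constructibility of the number of connected components of fibres, and Nori's bound on $\lvert G(\F_\ell)\rvert$), and would need to be supplied.
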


Here and elsewhere in this paper we write
$\bar k$ for an algebraic closure of $k$ and $\ov Y=Y\times_L\bar k$. 
A $(\bar k/L)$-form of $X$ is a variety $Y$ over a field $L$,
where $k\subset L\subset\bar k$, such that $\ov Y\cong \ov X$. 

Although the finiteness of $\Br(\ov Y)^{\Gal(\bar k/L)}$ in 
Theorem~\ref{ThmC} follows from the main results of \cite{SZ1},
they are not used in the present proof, so Theorem~\ref{ThmC} also gives a new
proof of \cite[Thm. 1.2]{SZ1}.

Theorem~\ref{ThmC} is the combination of Corollaries \ref{av} and~\ref{k3}
of a more general Theorem~\ref{brauer-forms-bound}.
For an analogue of Theorem~\ref{ThmC}
concerning torsion of abelian varieties which are $(\bar k/L)$-forms
of a given abelian variety, see the remark at the end of the paper.

When $Y$ is a $(\bar k/L)$-form of a K3 surface $X$,
Theorem~\ref{ThmC} together with a classical observation of Minkowski \cite{minkowski:finite-subgroups}
that there are only finitely many isomorphism classes
of finite subgroups in $\GL_n(\Z)$
gives the boundedness of $\lvert \Br(Y)/\Br_0(Y) \rvert$, where $\Br_0(Y)$ is the image of
the canonical map $\Br(k)\to \Br(Y)$
(see \cite[Lemma~6.4]{VV}).
For example, this can be applied to the diagonal quartic surface $X\subset\mathbb P^3_\Q$
given by $x^4+y^4+z^4+w^4=0$. Any surface $Y\subset \mathbb P^3_L$ given by
\[ ax^4+by^4+cz^4+dw^4=0, \]
where $a,b,c,d\in L^*$, is a $(\Qalg/L)$-form of $X$. In the particular case $L=\Q$ a bound
for $\lvert \Br(Y)/\Br_0(Y) \rvert$ was obtained with an explicit constant in \cite[Cor. 4.6]{ISZ}, see also \cite{I, IS, N} for related work.

Recently, Tony V\'arilly-Alvarado conjectured that for any positive integer $n$ 
and any isomorphism class of a primitive sublattice $\Lambda$ of the K3 lattice $\Lambda_{K3}$,
there is a constant $c(n,\Lambda)$ such that for any K3 surface $X$ defined over a number field
of degree $n$ with $\NS(\ov X)\cong \Lambda$ we have $\lvert\Br(X)/\Br_0(X)\rvert<c(n,\Lambda)$; see
\cite[Conjecture 4.6]{V} and \cite[Question 1.1]{VV}.
Combining Theorems \ref{ThmB} and~\ref{ThmC} we confirm this conjecture in the
case of K3 surfaces of CM type, even without fixing the isomorphism type of the
N\'eron--Severi lattice.

\begin{introcorollary} \label{CorC1}
For any positive integer $n$ there is a constant
$C=C_n$ such that $\lvert \Br(X)/\Br_0(X) \rvert < C$ 
and $\lvert \Br(\ov X)^{\Gal(\Qalg/k)} \rvert < C$
for any K3 surface $X$ of CM type defined
over a number field $k$ of degree $n$.
\end{introcorollary}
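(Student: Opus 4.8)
The plan is to combine Theorems~\ref{ThmB} and~\ref{ThmC} with the standard exact sequence that controls the Brauer group of a surface over a number field. First I would invoke Theorem~\ref{ThmB}: since there are only finitely many $\Qalg$-isomorphism classes of K3 surfaces of CM type that can be defined over a number field of degree $n$, I can fix representatives $X_1,\dots,X_m$ of these classes, and, after enlarging, I may assume all of them are defined over a single number field $k_0$ (take $k_0$ to be the compositum of fields of definition). Now let $X$ be a K3 surface of CM type over a number field $k$ with $[k:\Q]=n$. Then $\ov X\cong\ov{X_i}$ for some $i$. Set $L=k\cdot k_0\subset\Qalg$; then $[L:k_0]\le[k:\Q]=n$, and $X\times_k L$ is a $(\Qalg/L)$-form of $(X_i)_{k_0}$, because $X\times_k\Qalg=\ov X\cong\ov{X_i}=(X_i)_{k_0}\times_{k_0}\Qalg$.

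Applying Theorem~\ref{ThmC} with base field $k_0$, variety $(X_i)_{k_0}$ and the integer $n$ then gives
\[ \lvert\Br(\ov X)^{\Gal(\Qalg/L)}\rvert < C_{n,(X_i)_{k_0}}. \]
Putting $C_1=\max_{1\le i\le m}C_{n,(X_i)_{k_0}}$ and noting that $k\subset L$, so $\Gal(\Qalg/L)\subset\Gal(\Qalg/k)$ and hence $\Br(\ov X)^{\Gal(\Qalg/k)}\subset\Br(\ov X)^{\Gal(\Qalg/L)}$, we get $\lvert\Br(\ov X)^{\Gal(\Qalg/k)}\rvert<C_1$. This already establishes the second inequality of the corollary.

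For $\lvert\Br(X)/\Br_0(X)\rvert$ I would use the Hochschild--Serre spectral sequence for $X\to\Spec k$. Writing $\Br_1(X)=\ker(\Br(X)\to\Br(\ov X))$, one has $\Br_0(X)\subset\Br_1(X)$, an injection $\Br_1(X)/\Br_0(X)\hookrightarrow H^1(k,\Pic(\ov X))$, and an injection $\Br(X)/\Br_1(X)\hookrightarrow\Br(\ov X)^{\Gal(\Qalg/k)}$, whence, using $\Pic(\ov X)=\NS(\ov X)$ for a K3 surface,
\[ \lvert\Br(X)/\Br_0(X)\rvert \le \lvert H^1(k,\NS(\ov X))\rvert\cdot\lvert\Br(\ov X)^{\Gal(\Qalg/k)}\rvert. \]
The second factor is $<C_1$ by the previous paragraph. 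For the first factor, $\NS(\ov X)$ is a lattice of rank $\rho\le 20$ on which $\Gal(\Qalg/k)$ acts through a finite subgroup $G$ of $\GL_\rho(\Z)$; by Minkowski's theorem \cite{minkowski:finite-subgroups} the order of $G$ is bounded by a constant depending only on $\rho$, and one has $\lvert H^1(k,\NS(\ov X))\rvert=\lvert H^1(G,\NS(\ov X))\rvert\le\lvert G\rvert^{\rho}$, so this is bounded by a constant $C_2$ depending only on $n$ (this is the argument of \cite[Lemma~6.4]{VV}). Taking $C=C_1C_2$ finishes the proof.

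I expect the only real subtlety to be the opening reduction: Theorem~\ref{ThmC} applies to the forms of a single variety over a single base field, whereas the K3 surfaces of CM type over degree-$n$ number fields form an a priori infinite family, so it is essential to apply Theorem~\ref{ThmB} first to cut this down to finitely many geometric isomorphism classes and then to pass to a common field of definition $k_0$ before invoking Theorem~\ref{ThmC}; the rest is the routine Brauer-group bookkeeping recalled above.
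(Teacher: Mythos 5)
Your argument is correct and follows the same route the paper indicates: apply Theorem~\ref{ThmB} to reduce to finitely many $\Qalg$-isomorphism classes over a common number field $k_0$, invoke Theorem~\ref{ThmC} (applied to each representative over $k_0$) to bound $\lvert\Br(\ov X)^{\Gal}\rvert$, and then combine the Hochschild--Serre spectral sequence with Minkowski's theorem, as in \cite[Lemma~6.4]{VV}, to bound $\lvert\Br(X)/\Br_0(X)\rvert$. The compositum argument you spell out --- placing $X_L$ as a $(\Qalg/L)$-form of some $X_i$ over $k_0$ with $[L:k_0]\le n$ --- is precisely the bookkeeping the paper leaves implicit, and the rest is sound.
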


Combining Theorems \ref{ThmA} and~\ref{ThmC} with the fact that the Mumford--Tate conjecture
holds for abelian varieties of CM type, we obtain:

\begin{introcorollary} \label{CorC2}
For any positive integers $n$ and $g$ there is a constant
$C=C_{n,g}$ such that $\lvert \Br(\ov X)^{\Gal(\Qalg/k)} \rvert < C$ for any form $X$ of an abelian variety 
of dimension~$g$ of CM type, where $X$ is defined over a number field $k$ of degree $n$.
\end{introcorollary}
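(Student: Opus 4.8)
The plan is to combine the finiteness of $\Qalg$-isomorphism classes from Theorem~\ref{ThmA} with the uniform Brauer bound of Theorem~\ref{ThmC}, using the fact that the Mumford--Tate conjecture is known for abelian varieties of CM type (so that Theorem~\ref{ThmC} applies). First I would fix $n$ and $g$ and let $X/k$ be an arbitrary form of a $g$-dimensional abelian variety of CM type, with $[k:\Q]=n$. Then $X\times_k\Qalg$ is itself a $g$-dimensional abelian variety of CM type (CM type is a geometric condition), and it is defined over $k$, a field of degree $n$; moreover $X$ is a $(\Qalg/k)$-form of $X\times_k\Qalg$. By Theorem~\ref{ThmA}, as $X$ ranges over all such surfaces, the varieties $X\times_k\Qalg$ fall into finitely many $\Qalg$-isomorphism classes; pick representatives $A_1,\dots,A_r$ of these classes, each an abelian variety of dimension $g$ of CM type over $\Qalg$.

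The next step is to descend each $A_i$ to a number field. Each $A_i$ is defined over some number field $k_i\subset\Qalg$; since an abelian variety of CM type over $\Qalg$ can always be defined over a number field, and we only have finitely many of them, we may choose a single number field $k_0$, finite over $\Q$, over which all of $A_1,\dots,A_r$ are defined. Each $A_i$ (viewed over $k_0$) satisfies the Mumford--Tate conjecture, since this is classical for abelian varieties of CM type. Hence Theorem~\ref{ThmC} applies to each $A_i/k_0$: for the integer $n'=n\cdot[k_0:\Q]$ there is a constant $C_i=C_{n',A_i}$ such that every $(\Qalg/L)$-form $Y$ of $A_i$ over a field $L$ with $k_0\subset L\subset\Qalg$ and $[L:k_0]\leq n'$ satisfies $\lvert\Br(\ov Y)^{\Gal(\Qalg/L)}\rvert<C_i$. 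Set $C=C_{n,g}=\max_i C_i$.

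It remains to check that our given $X/k$ is covered by this bound. We have $X\times_k\Qalg\cong A_i$ for some $i$, hence $X$ is a form of $A_i$ over the composite field $L=kk_0\subset\Qalg$. This $L$ contains $k_0$ and satisfies $[L:k_0]\leq[L:\Q]\leq[k:\Q][k_0:\Q]=n'$, so Theorem~\ref{ThmC} gives $\lvert\Br(X\times_k\Qalg)^{\Gal(\Qalg/L)}\rvert<C_i\leq C$. Finally, since $\Gal(\Qalg/L)\subset\Gal(\Qalg/k)$, the Galois-invariant subgroup $\Br(\ov X)^{\Gal(\Qalg/k)}$ is a subgroup of $\Br(\ov X)^{\Gal(\Qalg/L)}$, and therefore $\lvert\Br(\ov X)^{\Gal(\Qalg/k)}\rvert<C$, as required. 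The only mildly delicate point is the bookkeeping of field degrees: one must apply Theorem~\ref{ThmC} relative to the base $k_0$ (over which the $A_i$ are defined) rather than $\Q$, and enlarge the degree bound from $n$ to $n'=n[k_0:\Q]$ accordingly; everything else is formal, because the finitely many geometric isomorphism classes supplied by Theorem~\ref{ThmA} reduce the problem to finitely many applications of Theorem~\ref{ThmC}.
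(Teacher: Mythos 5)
Your proof is correct and follows exactly the approach the paper indicates (combining Theorems~\ref{ThmA} and~\ref{ThmC} with the validity of the Mumford--Tate conjecture for CM abelian varieties); the paper gives no further details, so your careful handling of the base field $k_0$ and the degree bookkeeping is precisely the routine verification the authors leave to the reader. One minor simplification: since $L=kk_0$, one in fact has $[L:k_0]\leq[k:\Q]=n$, so applying Theorem~\ref{ThmC} with degree bound $n$ (rather than $n'=n[k_0:\Q]$) already suffices, though your more generous choice also works.
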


It would be interesting to find explicit expressions for the constants in Corollaries \ref{CorC1} and \ref{CorC2}.

Theorem~\ref{ThmA} is proved in Section~\ref{sec:av}, see Theorem~\ref{g-cm-ab-vars}.
In Section~\ref{sec:sv} we recall the background on Shimura varieties
used to prove Theorem~\ref{ThmB} in Section~\ref{sec:k3}, see Theorem~\ref{finiteness-cm-k3}. 
Finally, in Section~\ref{sec:brauer} we discuss the Brauer groups of $(\bar k/L)$-forms of projective varieties
in relation with the Mumford--Tate conjecture
and prove Theorem~\ref{brauer-forms-bound} and its corollaries, including Theorem~\ref{ThmC}.

The authors are grateful to Fran\c{c}ois Charles for a very useful discussion.
We are extremely grateful to Anna Cadoret for her comments and suggestions
that helped us to improve the results and the presentation of this paper.
We thank Wessel Bindt, Yuri Zarhin and the referee for their comments.
Both authors have been supported by the EPSRC grant EP/M020266/1.

\section{Finiteness theorem for abelian varieties of CM type} \label{sec:av}

In this section, we prove a finiteness theorem for abelian varieties of CM type.
The proof relies on the following recent theorem of Tsimerman.

\begin{theorem}[{\cite[Theorem~5.1]{tsimerman:galois-bound}}] \label{tsimerman-bound}
Let \( g \) be a positive integer.
There exist constants~\( b_g, C_g > 0 \) such that, for every principally polarised abelian variety \( A \) of dimension~\( g \) defined over a number field $k$, if $\ov A$ is of CM type, 
then
\[ \lvert \disc(R) \rvert  <  C_g \, [k:\bQ]^{b_g}, \]
where \( R \) is the centre of $\End(\ov A)$.
\end{theorem}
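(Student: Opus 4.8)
The plan is to bound the cardinality of the orbit of the moduli point $x=[A]\in\Ag(\Qalg)$ under $\Gal(\Qalg/\bQ)$ from both sides. One side is trivial: since $A$ is defined over $k$, the point $x$ is fixed by $\Gal(\Qalg/k)$, so $\#\bigl(\Gal(\Qalg/\bQ)\cdot x\bigr)\le[k:\bQ]$. The whole content of the theorem is therefore a lower bound
\[
\#\bigl(\Gal(\Qalg/\bQ)\cdot x\bigr)\ \ge\ c_g^{-1}\,\abs{\disc(R)}^{\delta_g}
\]
with $c_g,\delta_g>0$ depending only on $g$; combined with the upper bound this gives the statement with $b_g=1/\delta_g$ (the usual loss of $\epsilon$ in the exponent being absorbed into $\delta_g$).

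For the lower bound I would first normalise the endomorphism data. Replacing $A$ by an isogenous abelian variety --- which enlarges the field of definition and changes $\abs{\disc(R)}$ by at most polynomially bounded factors, the degree of the isogeny being controlled by the Masser--W\"ustholz estimate in terms of the Faltings height and $[k:\bQ]$ --- one may assume that $\End(\ov A)$ is the maximal order $\mathcal{O}_F$ of a CM algebra $F$, so that $R=\mathcal{O}_F$; after splitting off isotypic factors one may further assume $\ov A$ is simple with CM field $F$ of degree $2g$, so that $\abs{\disc(R)}=\abs{\disc F}$. For such $x$, Shimura's reciprocity law describes the Galois orbit: it surjects onto the (size $\le 2^g$) orbit of the CM type of $A$, and over the reflex field $F^{*}$ it is a torsor under the image of the reflex norm $\mathrm{Cl}(\mathcal{O}_{F^{*}})\to\mathrm{Cl}(\mathcal{O}_F)$. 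A class field theory computation bounds the index of that image by a quantity attached to the totally real subfield $F^{+}$, so the problem reduces to showing that the relative class number $h_F/h_{F^{+}}$, and separately $\abs{\disc F^{+}}$, are at least fixed powers of $\abs{\disc F}$ up to $\epsilon$.

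The serious input is the lower bound for $h_F/h_{F^{+}}$, for which an appeal to Siegel's theorem is not directly available in a usable shape. It is provided by the averaged Colmez conjecture --- proved by Yuan and Zhang, and independently by Andreatta, Goren, Howard and Madapusi Pera --- which expresses the average over all CM types of the Faltings heights of the abelian varieties with complex multiplication by $\mathcal{O}_F$ in terms of $\log\bigl(\abs{\disc F}/\disc(F^{+})^{2}\bigr)$ and the logarithmic derivative at $s=0$ of the quadratic Hecke $L$-function of $F^{+}$ cutting out $F$. Since Faltings heights are bounded below by a constant depending only on $g$ (Bost), and a CM field has the same unit rank as its totally real subfield so that no relative regulator intervenes, the functional equation converts this identity into the required lower bound for $h_F/h_{F^{+}}$; the analogous bound for $\abs{\disc F^{+}}$ is obtained by a more elementary argument counting CM points of bounded degree on the Hilbert modular variety attached to $F^{+}$. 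I expect the main obstacle to be exactly this synthesis --- running the reciprocity bookkeeping (the index of the reflex norm, proper CM subfields, the polarisation type) uniformly in $g$, and passing rigorously from the \emph{averaged} Faltings-height identity to a lower bound for a \emph{single} Galois orbit --- which is the technical core of Tsimerman's theorem and the point at which the deep equidistribution results behind averaged Colmez become indispensable.
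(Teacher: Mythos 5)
First, a framing point: the paper does not prove this theorem. It is imported verbatim from Tsimerman's paper (his Theorem 5.1) and used as a black box; the Masser--W\"ustholz and Zarhin-trick material in Section~2 sits \emph{after} this statement, to pass from it to Theorem~A. So there is no ``paper's own proof'' to compare against, and your sketch must be judged against Tsimerman's argument.

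Your sketch has the right cast of characters --- the reduction to a lower bound for Galois orbits, Shimura reciprocity and the reflex norm, the relative class number $h_F/h_{F^+}$ of a CM field, the disappearance of the regulator because $\mathcal{O}_F^\times$ and $\mathcal{O}_{F^+}^\times$ have the same rank, the averaged Colmez formula, Bost's lower bound for stable Faltings heights, and Masser--W\"ustholz. But the wiring between them is wrong at the crucial place. You claim that averaged Colmez, together with Bost's lower bound $h_{\mathrm{Fal}}\geq -c_g$ and the functional equation, yields the lower bound for $h_F/h_{F^+}$, and that this replaces an appeal to Siegel's theorem. It does not. Writing the averaged Colmez identity as $\mathrm{avg}_\Phi\, h_{\mathrm{Fal}}(A_\Phi) = -\tfrac12 (L'/L)(0,\chi) - \tfrac14\log(D_F/D_{F^+}^2) + O_g(1)$ and pushing the $L'/L$ term through the functional equation gives $\mathrm{avg}_\Phi\, h_{\mathrm{Fal}}(A_\Phi) = \tfrac14\log D_F + \tfrac12 (L'/L)(1,\chi) + O_g(1)$. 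Feeding in $\mathrm{avg}\geq -c_g$ produces only the inequality $(L'/L)(1,\chi) \geq -\tfrac12\log D_F - O_g(1)$, which is the \emph{trivial} lower bound that every $L$-function satisfies via the Hadamard product. It tells you nothing about a possible Siegel zero, hence nothing about $L(1,\chi)$ or $h_F/h_{F^+}$. The class-number lower bound in Tsimerman's proof is a separate, classical ingredient: Brauer--Siegel applied to $\zeta_F/\zeta_{F^+}$ (or equivalently Siegel's bound $L(1,\chi)\gg_\epsilon D^{-\epsilon}$ for the quadratic Hecke character), combined with the analytic class number formula for the relative class number. Precisely because the regulator cancels --- the point you correctly make --- this is a directly usable lower bound; it is ineffective (as are the constants $b_g,C_g$), but that is fine for the statement as given.

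What averaged Colmez actually buys is the \emph{opposite} inequality: using the trivial upper bound $(L'/L)(1,\chi)\ll\log D_F$ together with Bost (to pass from the $2^g$-term average to each individual summand), one gets $h_{\mathrm{Fal}}(A)\ll_{g,\epsilon}|\disc(R)|^\epsilon$ for every CM abelian variety with CM by the maximal order. It is this polynomial (in fact logarithmic) \emph{upper} bound on the height that feeds into the Masser--W\"ustholz isogeny estimate and the Shimura-reciprocity bookkeeping to force the Galois orbit to be large; without it, one only had the height bound under GRH, which is why the André--Oort result for $\mathcal{A}_g$ was previously conditional. Two smaller inaccuracies: the ``passing from the averaged identity to a single Galois orbit'' step is genuinely easy (a factor $2^g$ loss, since each term is $\geq -c_g$), so it is not the technical core; and the proofs of averaged Colmez by Yuan--Zhang and by Andreatta--Goren--Howard--Madapusi Pera rest on arithmetic intersection theory and modularity of arithmetic generating series, not on equidistribution.
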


The exact form of the bound in Theorem~\ref{tsimerman-bound} is not important for our application -- it only matters that when we fix \( k \) and \( g \), we get a uniform bound for \( \lvert \disc(R) \rvert \).

It is straightforward to deduce a finiteness theorem for \emph{absolutely simple} principally polarised abelian varieties of CM type from Theorem~\ref{tsimerman-bound} and classical results.

\begin{corollary} \label{finiteness-cm-simple-ab-vars}
For all positive integers \( g \) and~\( n \), there are only finitely many \( \Qalg \)-isomorphism classes of absolutely simple principally polarised abelian varieties of dimension~\( g \) of CM type which can be defined over number fields of degree~\( n \).
\end{corollary}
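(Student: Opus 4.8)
The plan is to reduce the statement to Theorem~\ref{tsimerman-bound} via the classical theory of CM abelian varieties. First, recall that an absolutely simple abelian variety $A$ of CM type over $\Qalg$ has $\End(\ov A)\otimes\Q=E$, a CM field of degree $2g$ over $\Q$, and $A$ is determined up to isogeny by the pair $(E,\Phi)$ consisting of this CM field and a CM type $\Phi$; there are only finitely many such pairs once $g$ is fixed and $E$ ranges over CM fields of bounded discriminant. Applying Theorem~\ref{tsimerman-bound} with $R=\O$ the ring of integers of $E$ (or more precisely the centre of $\End(\ov A)$, whose discriminant controls the discriminant of $E$), we get that only finitely many CM fields $E$ arise, hence only finitely many isogeny classes of such $A$ over $\Qalg$.

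The next step is to pass from isogeny classes to isomorphism classes. Within a fixed $\Qalg$-isogeny class of CM abelian varieties with CM by $(E,\Phi)$, the isomorphism classes of abelian varieties (ignoring polarisation) correspond to lattices in $E\otimes\R$ stable under a given order, equivalently to a set controlled by the ideal class group of that order together with finitely many choices of order inside $\O_E$ — and since $E$ is now restricted to a finite set, there are only finitely many isomorphism classes of abelian varieties (forgetting the polarisation) over $\Qalg$ in all the relevant isogeny classes. Finally, for each such abelian variety $A$, the principal polarisations on $A$ up to isomorphism form a finite set: the set of principal polarisations, if nonempty, is a torsor under a finite group (a quotient related to units of $E$ of absolute value $1$ modulo norms, or concretely $(\O_E^\times)^+/N_{E/E_0}(\O_E^\times)$ type data), which is finite because $E$ lies in our finite set. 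Combining these finitenesses yields only finitely many $\Qalg$-isomorphism classes of principally polarised absolutely simple CM abelian varieties of dimension $g$ arising from number fields of degree $n$.

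The main obstacle, and the only place real input beyond bookkeeping is needed, is the very first reduction: translating the degree hypothesis $[k:\Q]\le n$ into a bound on $|\disc(R)|$, which is exactly what Theorem~\ref{tsimerman-bound} supplies — but only for \emph{principally polarised} $A$, which is why the statement is phrased for principally polarised abelian varieties. Everything after the discriminant bound is the standard finiteness of CM fields of bounded discriminant (Hermite--Minkowski) plus the classical structure theory of CM abelian varieties (Shimura--Taniyama), so the proof is essentially a matter of assembling known results once Theorem~\ref{tsimerman-bound} is in hand. I should be careful that $R$ is the centre of $\End(\ov A)$ rather than $E$ itself when $A$ is simple — but for simple $A$ the endomorphism algebra is the field $E$, so $R=\O$, an order in $E$, and $|\disc(E)| \mid |\disc(R)|$ up to a bounded factor, so the discriminant bound transfers to $E$ directly.
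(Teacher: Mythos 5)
Your argument follows essentially the same route as the paper's: apply Tsimerman's theorem to bound $\lvert\disc(R)\rvert$ where $R=\End(\ov A)$ is an order in a CM field of degree $2g$, invoke Hermite--Minkowski to conclude there are finitely many such orders, and then use the classical theory of CM abelian varieties (the paper cites Milne's CM notes, 3.11) to see that for each $R$ the variety is determined up to $\Qalg$-isomorphism by a CM type and an ideal class of $R$. One small cleanup: the phrase ``finitely many choices of order inside $\O_E$'' is not automatic (a number field has infinitely many orders) and should be replaced by the observation you make at the end --- that $R$ itself has bounded discriminant, so Hermite--Minkowski applies directly to $R$ without first fixing $E$; and your extra discussion of the finite torsor of principal polarisations is sound but more than the paper does here (it defers the polarisation count to Proposition~\ref{g-cm-pp-ab-vars}, citing Milne's \emph{Abelian Varieties}, Theorem 18.1).
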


\begin{proof}
If \( A \) is an absolutely simple principally polarised abelian variety of dimension~\( g \) of CM type, then the endomorphism ring \( R = \End(\ov A) \) is an order in a CM field of degree~\( 2g \).
By Theorem~\ref{tsimerman-bound}, the discriminant of this ring is bounded in terms of \( g \) and~\( n \).

It follows from the Hermite--Minkowski theorem that there are only finitely many orders \( R \) in number fields with a given value of \( \disc(R) \) \cite[Chapter~V, Prop.~7, Cor.~2]{weil:basic-nt}.
By the classical theory of abelian varieties of CM type, an absolutely simple abelian variety of CM type with endomorphism ring \( R \) is determined (up to \( \Qalg \)-isomorphism) by a class in the ideal class group of \( R \) and a CM type for \( R \otimes_\bZ \bQ \) \cite[3.11]{milne:cm}.
Hence there are only finitely many \( \Qalg \)-isomorphism classes of absolutely simple abelian varieties of CM type with a given endomorphism ring.
\end{proof}

To generalise Corollary~\ref{finiteness-cm-simple-ab-vars} to non-simple abelian varieties one needs to do a little more work because the endomorphism ring may not be commutative so its centre~\( R \) does not contain enough information to determine the abelian variety up to finite ambiguity.
Our proof of this generalisation is based on Shafarevich's proof for the case of abelian surfaces isogenous to the square of a CM elliptic curve \cite{shafarevich:k3-conj}.

There is an alternative approach which also proves Proposition~\ref{g-cm-pp-ab-vars} from Theorem~\ref{tsimerman-bound}, using Pila and Tsimerman's height bound for the preimages of CM points in a fundamental set of \( \Hg \) \cite[Theorem~3.1]{pila-tsimerman:ao-surfaces}.
The final step, using Zarhin's trick to deduce Theorem~\ref{g-cm-ab-vars} from Proposition~\ref{g-cm-pp-ab-vars}, has an interpretation in terms of Shimura varieties which we describe below in Section~\ref{ssec:comparison-zarhin-trick}.
However we preferred to give a proof here which does not require the machinery of Shimura varieties.

\begin{proposition} \label{finiteness-cm-ab-vars-isogeny}
Let \( g \) and~\( n \) be positive integers.
Consider abelian varieties of dimension \( g \) of CM type which are defined over a number field of degree~\( n \) and have a principal polarisation defined over the same number field.
For each \( g \) and~\( n \), such abelian varieties fall into only finitely many isogeny classes over \( \Qalg \).
\end{proposition}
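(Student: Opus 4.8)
The plan is to reduce the classification of isogeny classes to the classification of the "CM data" attached to an abelian variety of CM type, and then to show that Theorem~\ref{tsimerman-bound} bounds this data. Let $A$ be an abelian variety as in the statement: dimension $g$, CM type, with a principal polarisation over a number field $k$ of degree $n$. First I would decompose $A$ up to isogeny over $\Qalg$ as $\prod_i B_i^{n_i}$ with the $B_i$ pairwise non-isogenous absolutely simple abelian varieties of CM type. The isogeny class of $A$ over $\Qalg$ is determined by the multiset of isogeny classes of the $B_i$ together with the multiplicities $n_i$; since $\sum_i n_i \dim(B_i) = g$, there are only finitely many possibilities for the numerical shape $(\dim B_i, n_i)$. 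So it suffices to bound, in terms of $g$ and $n$, the finitely many absolutely simple factors $B_i$ that can occur — and for a simple abelian variety of CM type the isogeny class is determined by its CM field $E_i$ (a CM field of degree $2\dim B_i$) together with a CM type $\Phi_i$ on $E_i$ (see \cite[3.11]{milne:cm}). Since a CM field has only finitely many CM types, everything comes down to bounding the discriminant of the CM fields $E_i$.

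The key point, following Shafarevich, is to produce from $A$ a \emph{principally polarised} abelian variety of dimension $g$, defined over a number field of degree bounded in terms of $n$ (and $g$), whose centre of endomorphism ring has discriminant controlling the $\disc(E_i)$, so that Theorem~\ref{tsimerman-bound} applies. I would argue as follows. After replacing $k$ by an extension of degree bounded in terms of $g$ (to split the decomposition up to isogeny — a standard fact: all the simple factors and the isogeny $A \sim \prod B_i^{n_i}$ are defined over a bounded extension, since the relevant endomorphisms and subvarieties are defined over such an extension), we may work with the individual factors $B_i$ over a field of bounded degree. Each $B_i$ is isogenous to an abelian variety carrying a principal polarisation over a bounded extension (every abelian variety over a field is isogenous to a principally polarised one after a bounded-degree extension, or alternatively one passes to $B_i^4$ and uses Zarhin's trick, which produces a \emph{canonical} principal polarisation; but here, since we only need the \emph{isogeny} class, it is cleanest to use that $B_i$ itself, being of CM type, admits a polarisation and hence $B_i$ is isogenous over a bounded extension to a principally polarised abelian variety $B_i'$). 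The endomorphism algebra of $B_i'$ is $E_i$, so the centre of $\End(\ov{B_i'})$ is an order in $E_i$, whose discriminant is therefore at least $|\disc(E_i)|$; by Theorem~\ref{tsimerman-bound} applied to $B_i'$, this is bounded in terms of $\dim B_i' = \dim B_i \le g$ and the degree of the field of definition, which is bounded in terms of $n$ and $g$. Hence $|\disc(E_i)|$ is bounded, so by Hermite--Minkowski there are finitely many possibilities for $E_i$, hence finitely many possibilities for $(E_i, \Phi_i)$ and therefore for the isogeny class of $B_i$.

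Assembling: there are finitely many numerical shapes, finitely many simple factors up to isogeny for each shape, and hence finitely many isogeny classes over $\Qalg$ for $A$, which is the claim. The main obstacle is the bookkeeping in the reduction step: one must verify that passing from $A$ to its simple factors $B_i$, then to principally polarised $B_i'$ isogenous to $B_i$, only enlarges the field of definition by a degree bounded purely in terms of $g$ (independently of $A$). For the decomposition this follows because the connected component of $\prod \mathrm{Aut}$ acting on the relevant Hom-groups is a linear algebraic group over $\Q$ whose $\Z$-points have a subgroup of bounded index defined over a bounded extension — equivalently, one invokes that an abelian variety over a number field of degree $n$ has all its geometric endomorphisms and all its abelian subvarieties defined over an extension of degree $\le c(g,n)$ for an explicit $c$ (Silverberg; or Serre--Tate plus Minkowski's bound on torsion in $\GL_{2g}(\Z)$). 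For the polarisation step, a principal polarisation in the isogeny class exists over a bounded extension because a polarisation of $B_i$ of some degree $m$ gives, after passing to a suitable isogenous variety, a principal one, and the isogeny and the polarisation are defined over an extension of degree bounded in terms of $m$ and $\dim B_i$; and $m$ can itself be bounded in terms of $g$ once $E_i$ ranges over a finite set, or circumvented entirely by first applying Zarhin's trick to get the canonical principal polarisation on $B_i^4$ and observing that the isogeny class of $B_i^4$ still determines that of $B_i$. I would organise the write-up to make each of these "bounded-degree" claims explicit, citing Silverberg's bound for fields of definition of endomorphisms and the classical CM theory for the final counting.
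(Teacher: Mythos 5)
Your proposal is correct in substance but takes a much longer route than the paper, and in doing so imports machinery that the proposition was designed to avoid. You correctly decompose $\ov A$ up to isogeny as $\prod_i B_i^{n_i}$ with the $B_i$ pairwise non-isogenous simple CM abelian varieties, and correctly reduce the problem to bounding the discriminants of the CM fields $F_i = \End(B_i)\otimes\bQ$, finishing with the same counting via pairs (CM field, primitive CM type). The divergence is in how you bound $\abs{\disc(F_i)}$: you propose to descend each $B_i$ to a number field of degree bounded in terms of $g$ and $n$ (via Silverberg-type bounds on fields of definition of endomorphisms and abelian subvarieties), pass to a principally polarised variety in the isogeny class of $B_i$ by Zarhin's trick (you write $B_i^4$, but you mean $(B_i\times B_i^\vee)^4$), and then apply Theorem~\ref{tsimerman-bound} to each simple factor separately. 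This can be made to work, but it requires auxiliary descent lemmas and a second invocation of Zarhin's trick, which the paper deliberately postpones to the passage from Proposition~\ref{g-cm-pp-ab-vars} to Theorem~\ref{g-cm-ab-vars}. The paper's proof applies Theorem~\ref{tsimerman-bound} once, directly to $A$ itself, which is already principally polarised over the degree-$n$ field $k$ by hypothesis. The key algebraic observation you miss is that the centre $R$ of $\End(\ov A)$ is an order in the product $\prod_i F_i$, so that
\[ \abs{\disc(R)} \;\geq\; \prod_{i} \abs{\disc(F_i)}, \]
and Tsimerman's bound on $\abs{\disc(R)}$ therefore bounds all the $\abs{\disc(F_i)}$ simultaneously — no isogenies, no extra field extensions, no bounds on fields of definition of the simple factors are needed. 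This is exactly the point of the hypothesis that the principal polarisation of $A$ is defined over $k$. When you write up arguments like this, it is worth first checking whether the discriminant of the centre of the endomorphism ring of the original variety already controls everything, before embarking on a reduction to the simple factors.
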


\begin{proof}
Let \( A \) be a principally polarised abelian variety of dimension~\( g \) of CM type defined over a number field of degree~\( n \).
Then \( \ov A \) is isogenous to
\begin{equation} \label{eqn:isogeny-factors-decomposition}
\prod_{i=1}^t A_i^{n_i}
\end{equation}
for some pairwise non-isogenous simple abelian varieties \( A_1, \dotsc, A_t \) over $\Qalg$
of CM type, where \( n_1, \dotsc, n_t \)
are positive integers.
Then \( F_i = \End(A_i) \otimes \bQ \) is a CM field.
We have
\[ \End(\ov A) \otimes \bQ = \prod_{i=1}^t \rM_{n_i}(F_i). \]
Let \( R \) be the centre of \( \End(\ov A). \)
Then \( R \) is an order in \( \prod_{i=1}^t F_i \).
It follows that
\[ \abs{\disc(R)}  \geq  \prod_{i=1}^t \abs{\disc(F_i)}. \]
By Theorem~\ref{tsimerman-bound},  \( \abs{\disc(R)} \) is bounded by a value depending only on \( g \) and~\( n \).
(This uses the hypothesis that \( A \) has a principal polarisation defined over $k$.)
Hence the discriminants \( \abs{\disc(F_i)} \) are also bounded.
Therefore the Hermite--Minkowski theorem \cite[Chapter~V, Prop.~7, Cor.~2]{weil:basic-nt} implies that there are finitely many possible choices for the~\( F_i \).

For each possible field~\( F_i \) there is a bijection between isogeny classes of simple CM abelian varieties over \( \Qalg \) with endomorphism algebra~\( F_i \) and primitive CM types for~\( F_i \) \cite[Proposition~3.13]{milne:cm}.
Each field has finitely many primitive CM types, so we conclude that the simple CM abelian varieties \( A_i \) which can appear in \eqref{eqn:isogeny-factors-decomposition} lie in finitely many \( \Qalg \)-isogeny classes.
Because the integers \( n_i \) and~\( t \) in \eqref{eqn:isogeny-factors-decomposition} are bounded by~\( g \), this proves that \( A \) itself must lie in one of finitely many \( \Qalg \)-isogeny classes.
\end{proof}

\begin{proposition} \label{g-cm-pp-ab-vars}
For all positive integers \( g \) and~\( n \) there are only
finitely many \( \Qalg \)-isomorphism classes of principally polarised abelian varieties of dimension~\( g \) of CM type defined over number fields of degree~\( n \).
\end{proposition}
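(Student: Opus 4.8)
The plan is to bootstrap from the isogeny finiteness already established in Proposition~\ref{finiteness-cm-ab-vars-isogeny}. By that proposition, a principally polarised abelian variety $A$ of dimension $g$ of CM type defined over a number field of degree $n$, with the principal polarisation also defined over that field, has $\ov A$ isogenous over $\Qalg$ to one of finitely many abelian varieties. So it suffices to fix one abelian variety $B$ over $\Qalg$ of CM type and show that there are only finitely many $\Qalg$-isomorphism classes of principally polarised $A$ (satisfying our hypotheses) with $\ov A$ isogenous to $B$. Writing $B\sim\prod_{i=1}^t A_i^{n_i}$ as in \eqref{eqn:isogeny-factors-decomposition}, the endomorphism algebra $D=\End(B)\otimes\bQ=\prod_i\rM_{n_i}(F_i)$ is fixed, as is its centre $E=\prod_i F_i$, a product of CM fields.

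The key step is to translate the problem into one about lattices with extra structure. Any $A$ with $\ov A\sim B$ corresponds, over $\Qalg$, to a lattice $L$ in the $\bQ$-vector space $V=H_1(B,\bQ)$ which is stable under an order $\O$ in $D$ (and the isomorphism class of the pair is recovered from the $\O$-module $L$ together with the Hodge structure, which is rigid here because we are in the CM situation), while the principal polarisation corresponds to a Riemann form $\psi\colon L\times L\to\bZ$ that is unimodular and compatible with the Rosati involution. I would argue in two stages. First, the discriminant bound of Theorem~\ref{tsimerman-bound}, applied via Proposition~\ref{finiteness-cm-ab-vars-isogeny}, bounds $\lvert\disc(E)\rvert$, hence bounds the order $R=\O\cap E$ to finitely many possibilities by Hermite--Minkowski; since the $n_i$ and $t$ are bounded by $g$, there are only finitely many possibilities for $\O$ up to isomorphism (an order in a fixed semisimple algebra containing a bounded centre sits inside a maximal order, and there are finitely many such containments by the standard theory). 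Second, having fixed $\O$, the principally polarised $A$'s correspond to unimodular Hermitian $\O$-lattices of a fixed rank inside $(V,\psi)$; by the classical finiteness of the class number of such lattices (Borel--Harish-Chandra, or directly Minkowski reduction applied to the arithmetic group $\mathrm{GSp}$-type stabiliser), there are only finitely many of these up to the action of the automorphism group, i.e.\ only finitely many $\Qalg$-isomorphism classes.

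Concretely, I expect the cleanest route is: (i) invoke Proposition~\ref{finiteness-cm-ab-vars-isogeny} to reduce to fixed $B$, $D$, $E$; (ii) observe that $\End(\ov A)$ is an order in $D$ whose centre $R$ has bounded discriminant, and deduce via Hermite--Minkowski and the theory of orders in semisimple algebras that $\End(\ov A)$ lies in finitely many isomorphism classes; (iii) fix the order $\O=\End(\ov A)$ and realise the polarised varieties as unimodular $\O$-lattices with involution-compatible form in a fixed ambient space, and conclude by class-number finiteness for the relevant arithmetic group. The main obstacle is step (iii): one must be careful that the reduction theory is being applied to the right arithmetic group (the unitary/symplectic similitude group attached to $(D,\text{Rosati involution})$ acting on $V$), and that "finitely many lattice classes" really does translate into "finitely many $\Qalg$-isomorphism classes of principally polarised abelian varieties of CM type" — here one uses that in the CM case the complex structure (Hodge structure) on $V$ is determined up to the action of the centralizer, so no moduli of complex structures intervene and the count is genuinely finite. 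A subsidiary point requiring care is passing from $\End(\ov A)$-stable lattices back to $\Qalg$-isomorphism classes rather than isogeny classes, which is exactly where unimodularity of the Riemann form (forced by the principal polarisation) is used to pin things down.
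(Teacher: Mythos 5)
Your proposal takes a genuinely different route from the paper. After the common first step (reduce to a single isogeny class via Proposition~\ref{finiteness-cm-ab-vars-isogeny}), the paper applies the Masser--W\"ustholz isogeny theorem: since $A$ is defined over a field of degree at most $n^2$ after adjoining a field of definition of the representative $B$, and both are principally polarised, there is an isogeny $\ov B \to \ov A$ of degree bounded purely in terms of $g$ and $n$ (and the Faltings height of the finitely many $B$'s). Bounded-degree isogenies have finitely many kernels, so $\ov A \cong \ov B/\ker(f)$ has finitely many possibilities, and then each $\ov A$ carries finitely many principal polarisations by Milne. This is where the bound $[k:\bQ]\leq n$ re-enters after the isogeny reduction, and the argument requires no lattice or order theory beyond what is already in Proposition~\ref{finiteness-cm-ab-vars-isogeny}.

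Your step (ii) contains a genuine error. You claim that bounding $\lvert\disc(R)\rvert$ for the centre $R$ of $\End(\ov A)$, together with ``the standard theory,'' forces $\End(\ov A)$ into finitely many isomorphism classes of orders in $D = \prod_i \rM_{n_i}(F_i)$. This is false as soon as some $n_i > 1$: a semisimple algebra such as $\rM_2(\bQ)$ contains infinitely many pairwise non-isomorphic $\bZ$-orders with centre $\bZ$ (for instance the Eichler orders of every level), so ``finitely many such containments'' does not hold. The bound on $\disc(R)$ controls only the commutative part of the endomorphism ring. In the absolutely simple case, where $\End(\ov A) = R$, your argument is fine; the gap is exactly in the case of repeated isogeny factors, which is also the case where Corollary~\ref{finiteness-cm-simple-ab-vars} is insufficient and the whole point of Proposition~\ref{g-cm-pp-ab-vars} lies.

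That said, the lattice-theoretic strategy can be repaired by bypassing $\End(\ov A)$ entirely. Fix the isogeny class and hence $V = H_1(B,\bQ)$, $E = \prod F_i$, $D = \End(\ov B)\otimes\bQ$; note that $D^\times = \GL_E(V)$. The lattice $L = H_1(\ov A,\bZ)\subset V$ is a faithful module over the order $R\subset E$, and $\ov A_1 \cong \ov A_2$ over $\Qalg$ if and only if $L_1$ and $L_2$ are conjugate by $D^\times$, i.e.\ isomorphic as $R$-lattices. Tsimerman's bound plus Hermite--Minkowski restrict $R$ to finitely many orders, and for each such $R$ the Jordan--Zassenhaus theorem says there are finitely many $R$-lattices of rank $2g$ up to isomorphism. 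One then finishes with Milne's finiteness of principal polarisations as in the paper. So your outline is workable once step (ii) is dropped and replaced by Jordan--Zassenhaus applied directly to the order $R$, rather than to $\End(\ov A)$. The paper's Masser--W\"ustholz route is arguably more economical because it avoids the theory of non-maximal orders and lattice class numbers altogether.
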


\begin{proof}
Let \( (A, \lambda) \) be a principally polarised abelian variety of CM type of dimension~\( g \) defined over a number field $k$ of degree~\( n \).

By Proposition~\ref{finiteness-cm-ab-vars-isogeny} there is a finite set \( \cS \) of principally polarised CM abelian varieties of dimension~\( g \) defined over number fields of degree~\( n \), which contains one representative from each \( \Qalg \)-isogeny class of such abelian varieties.
Let \( B \) be an abelian variety in~\( \cS \) which is \( \Qalg \)-isogenous to~\( A \).
Let \( K \) be a common field of definition of \( A \) and \( B \).
We can choose \( K \) such that $[K:\bQ] \leq n^2$.
The main theorem of \cite{mw:isogeny-avs} tells us that there are constants \( c \) and~\( \kappa \) depending only on~\( g \) such that there exists an isogeny \( f \colon \ov B\to\ov A \) of degree at most
\[ c \max \bigl( 1, \, h_F(B), \, [K : \bQ], \, \delta(A), \, \delta(B) \bigr)^\kappa. \]
Here \( h_F \) denotes the Faltings height and \( \delta \) denotes the minimum degree of a polarisation of an abelian variety.
The value of \( h_F(B) \) is bounded because \( B \) comes from the finite set~\( \cS \).
Since \( A \) and \( B \) are principally polarised, we have \( \delta(A) = \delta(B) = 1 \).
We conclude that there is a bound for the degree of~\( f \) depending only on \( g \) and~\( n \).

The kernel of~\( f \) is a subgroup of~\( B(\Qalg) \) of order \( \deg(f) \).
Since \( \deg(f) \) is bounded, there are finitely many possible subgroups.
If we know \( B \) and \( \ker(f) \), then the \( \Qalg \)-isomorphism class of \( A \) is determined because \( \ov A \cong \ov B/\ker(f)\).
Thus we conclude that there are only finitely many possible \( \Qalg \)-isomorphism classes for the abelian variety~\( A \).

By \cite[Theorem~18.1]{milne:abelian-varieties-old}, each of these abelian varieties has finitely many principal polarisations, up to isomorphisms of polarised abelian varieties.
\end{proof}

\begin{theorem} \label{g-cm-ab-vars}
For all positive integers \( g \) and~\( n \) there are only
finitely many \( \Qalg \)-isomorphism classes of abelian varieties of dimension~\( g \) of CM type defined over number fields of degree~\( n \).
\end{theorem}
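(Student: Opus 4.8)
The plan is to deduce the general statement from the principally polarised case established in Proposition~\ref{g-cm-pp-ab-vars} by means of Zarhin's quaternion trick. Let \( A \) be an abelian variety of dimension \( g \) of CM type defined over a number field \( k \) with \( [k:\bQ] = n \). Choosing a polarisation of \( A \) over~\( k \), we see that the dual \( A^\vee \) is defined over~\( k \) and \( k \)-isogenous to~\( A \); hence \( \ov{A^\vee} \) is isogenous to \( \ov A \) and is again of CM type. As a product of abelian varieties of CM type is of CM type, \( Z := (A \times A^\vee)^4 \) is an abelian variety of dimension~\( 8g \) of CM type defined over~\( k \), and by Zarhin's trick it carries a principal polarisation defined over~\( k \). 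Proposition~\ref{g-cm-pp-ab-vars}, applied with \( 8g \) in place of~\( g \), therefore shows that \( \ov Z \) lies in one of only finitely many \( \Qalg \)-isomorphism classes, depending only on \( g \) and~\( n \).

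It remains to bound the \( \Qalg \)-isomorphism class of \( \ov A \) in terms of that of \( \ov Z \). The composition of the projection \( A \times A^\vee \to A \) with the projection of the fourth power onto one factor realises \( \ov A \) as a direct factor of \( \ov Z = (\ov A \times \ov{A^\vee})^4 \); in particular \( \ov A \) is a quotient of \( \ov Z \) by an abelian subvariety. The theorem therefore follows once we know that a fixed abelian variety over~\( \Qalg \) has only finitely many direct factors up to isomorphism. This is standard. By Poincar\'e reducibility \( \ov Z \) is \( \Qalg \)-isogenous to a product \( \prod_i C_i^{e_i} \) of pairwise non-isogenous simple abelian varieties, so there are only finitely many \( \Qalg \)-isogeny classes of abelian subvarieties of \( \ov Z \). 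Within each such isogeny class, the number of isomorphism classes of abelian subvarieties of the \emph{fixed} variety \( \ov Z \) is finite: such a subvariety is isogenous to a fixed one by an isogeny of bounded degree, hence is obtained from it by quotienting by one of finitely many finite subgroups, and the finiteness of the isomorphism classes of abelian subvarieties of \( \prod_i C_i^{e_i} \) itself follows from the Jordan--Zassenhaus theorem applied to orders in the division algebras \( \End(C_i) \otimes \bQ \). One could alternatively argue directly using the classification of abelian varieties of CM type by ideal classes and CM types exploited in Propositions~\ref{finiteness-cm-ab-vars-isogeny} and~\ref{g-cm-pp-ab-vars}.

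The main obstacle is precisely this passage from \( \ov Z \) back to \( \ov A \). Knowing only that \( \ov A \) is isogenous to a factor of \( \ov Z \) would be insufficient, since an isogeny class of abelian varieties of CM type in general contains infinitely many isomorphism classes: an elliptic curve with complex multiplication is isogenous to curves with complex multiplication by orders of arbitrarily large conductor. What saves the argument is that Zarhin's trick delivers \( \ov A \) not merely up to isogeny but as an honest direct factor of the fixed abelian variety \( \ov Z \); this is the same mechanism by which Zarhin's trick enters the proof of the Shafarevich conjecture for abelian varieties.
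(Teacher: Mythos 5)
Your proof is correct and takes essentially the same approach as the paper: apply Zarhin's quaternion trick to get a principal polarisation on \( (A \times A^\vee)^4 \) over \( k \), invoke Proposition~\ref{g-cm-pp-ab-vars}, and conclude via the finiteness (up to isomorphism) of the direct factors of a fixed abelian variety. The paper simply cites \cite[Theorem~18.7]{milne:abelian-varieties-old} for that last finiteness statement; your sketch of it via idempotents and Jordan--Zassenhaus points at the right proof, though the intermediate claim that two subvarieties of \( \ov Z \) in the same isogeny class are related by an isogeny of \emph{bounded} degree is not justified as stated and is better bypassed in favour of the lattice-theoretic argument you also mention.
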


\begin{proof}
Let \( A \) be an abelian variety of CM type of dimension~\( g \) defined over a number field $k$ of degree~\( n \).
According to \cite[section~5.3]{Zar85}, \( (A \times A^\vee)^4 \) has a principal polarisation over \( k \).
Hence by Proposition~\ref{g-cm-pp-ab-vars}, there are only finitely many possible isomorphism classes for \( (\ov A \times \ov A^\vee)^4 \).
By \cite[Theorem~18.7]{milne:abelian-varieties-old}, \( (\ov A\times \ov A^\vee)^4 \) has finitely many direct factors up to isomorphism, which proves that there are finitely many possibilities for the \( \Qalg \)-isomorphism class of \( A \).
\end{proof}

\section{Shimura varieties} \label{sec:sv}

The purposes of this section are both to assist the reader who is not familiar with the theory of Shimura varieties and to set out the notation and terminology.

\subsection{Definition of Shimura variety components}

For the purposes of this paper, we do not need to worry about multiple connected components of a Shimura variety or about their exact field of definition.
We shall therefore omit the complexities of the adelic definition of Shimura varieties.
We simply define a ``Shimura variety component,'' which is a geometrically connected component of the canonical model of a Shimura variety.
One can describe the complex points of a Shimura variety component as follows.

A \defterm{Shimura datum} is a pair \( (\gG, X) \) where \( \gG \) is a connected reductive \( \bQ \)-algebraic group and \( X \) is a \( \gG(\bR) \)-conjugacy class in \( \Hom(\bS, \gG_\bR) \) satisfying axioms 2.1.1.1--2.1.1.3 of~\cite{deligne:shimura-varieties}.
Here \( \bS \) denotes the Deligne torus \( \Res_{\bC/\bR} \bG_m \).
These axioms imply that \( X \) is a finite disjoint union of Hermitian symmetric domains \cite[Corollaire~1.1.17]{deligne:shimura-varieties}.

Since we only wish to define connected components of Shimura varieties, we choose a connected component \( X^+ \subset X \).
Let \( \gG(\bQ)_+ \) denote the stabiliser of \( X^+ \) in \( \gG(\bQ) \).
In order to define congruence subgroups of \( \gG(\bQ)_+ \), pick a representation of \( \bQ \)-algebraic groups \( \rho \colon \gG \to \gGL(V_\bQ) \) and a lattice \( V_\bZ \subset V_\bQ \).
For each positive integer~\( N \), let
\[ \Gamma(V_\bZ, N) = \ker(\GL(V_\bZ) \to \GL(V_\bZ/NV_\bZ)). \]
A \defterm{congruence subgroup} of \( \gG(\bQ)_+ \) is defined to be a subgroup of \( \gG(\bQ)_+ \) which contains the intersection of $\gG(\bQ)_+$ with $\rho^{-1}(\Gamma(V_\bZ, N))$
as a finite index subgroup for some~\( N \).
By \cite[Proposition~4.1]{milne:shimura-varieties-intro}, this is equivalent to defining a congruence subgroup of \( \gG(\bQ)_+ \) to be the intersection of \( \gG(\bQ)_+ \) with a compact open subgroup of \( \gG(\bA^{\rm f}) \).
Hence the definition of congruence subgroups is independent of the choice of \( \rho \) and \( V_\bZ \).

If \( \Gamma \) is a congruence subgroup of \( \gG(\bQ)_+ \), then the quotient space \( S_\bC = \Gamma \bs X^+ \) has a canonical structure as a quasi-projective variety over~\( \bC \), by \cite{baily-borel:compactification}.
This variety~\( S_\bC \) is a connected component of a Shimura variety \( M_\bC \).

According to Deligne's theory of canonical models (\cite{deligne:shimura-varieties}, completed in \cite{milne:canonical-models} and~\cite{borovoi:canonical-models}),
the Shimura variety~\( M_\bC \) has a canonical model over a number field.
Hence the connected component \( S_\bC \) also has a model over a number field.
The field of definition of the canonical model of the disconnected Shimura variety is the reflex field \( E \) of \( (\gG, X) \).
The field of definition for the model of~\( S_\bC \) is an extension of the reflex field determined by the action of \( \Gal(\Ealg/E) \) on components of \( M_\bC \).

We use the phrase \defterm{Shimura variety component} to mean a variety over a number field whose extension to~\( \bC \) is of the form \( \Gamma \bs X^+ \) and whose structure over a number field comes from the theory of canonical models, as described above.

A \defterm{morphism of Shimura data} \( f \colon (\gG, X) \to (\gH, Y) \) is a homomorphism of algebraic groups \( f \colon \gG \to \gH \) such that composition with~\( f \) induces a map \( f_* \colon X \to Y \).
If \( f_*(X^+) \subset Y^+ \) and we have congruence subgroups \( \Gamma \subset \gG(\bQ)_+ \) and \( \Gamma_\gH \subset \gH(\bQ)_+ \) such that \( f(\Gamma) \subset \Gamma_\gH \), then \( f \) induces a morphism of algebraic varieties
\[ [f] \colon \Gamma \bs X^+ \to \Gamma_\gH \bs Y^+. \]
The morphism \( [f] \) is defined over the compositum of the natural fields of definition of the Shimura variety components \( \Gamma \bs X^+ \) and \( \Gamma_\gH \bs Y^+ \).

\subsection{Shimura varieties of Hodge and of abelian type} \label{ssec:hodge-abelian-type}

A fundamental example of a Shimura variety component is \( \Ag \), the coarse moduli space of principally polarised abelian varieties of dimension~\( g \).
This arises from the Shimura datum \( (\gGSp_{2g}, \Hg^{\pm}) \), where \( \Hg^{\pm} \) denotes a certain conjugacy class in \( \Hom(\bS, \gGSp_{2g,\bR}) \).
Using period matrices, there is a natural identification between \( \Hg^{\pm} \) and the union of the upper and lower Siegel half-spaces.
The complex points of \( \Ag \) are obtained as the quotient of a connected component \( \Hg \subset \Hg^{\pm} \) by the congruence subgroup \( \Sp_{2g}(\bZ) \).
The canonical model of \( \Ag \) is defined over~\( \bQ \), and the model of \( \Ag \) over~\( \bQ \) which comes from the theory of Shimura varieties is the same as the model over~\( \bQ \) which comes from the interpretation as a moduli space (Deligne's definition of canonical models of Shimura varieties was motivated by this case).

A Shimura datum \( (\gG, X) \) is said to be \defterm{of Hodge type} if there exists a morphism of Shimura data
\[ i \colon (\gG, X) \to (\gGSp_{2g}, \Hg^{\pm}) \]
such that the underlying homomorphism of algebraic groups \( \gG \to \gGSp_{2g} \) is injective.
If \( \Gamma \subset \gG(\bQ)_+ \) is a congruence subgroup such that \( i(\Gamma) \subset \Sp_{2g}(\bZ) \), then the induced morphism of Shimura variety components
\[ [i] \colon \Gamma \bs X^+ \to \Ag \]
is finite by \cite[Proposition~3.8(a)]{pink:thesis}.
Shimura varieties of Hodge type can be described as moduli spaces of abelian varieties with prescribed Hodge classes (for example, polarisations and endomorphisms) and a level structure.

A Shimura datum \( (\gH, Y) \) is said to be \defterm{of abelian type} if there exists a Shimura datum \( (\gG, X) \) of Hodge type and a morphism of Shimura data
\[ p \colon (\gG, X) \to (\gH, Y) \]
such that the underlying homomorphism of algebraic groups is surjective and has kernel contained in the centre of~\( \gG \).
If \( \Gamma \subset \gG(\bQ)_+ \) and \( \Gamma_\gH \subset \gH(\bQ)_+ \) are congruence subgroups such that \( p(\Gamma) \subset \Gamma_\gH \), then the resulting morphism of Shimura varieties
\[ [p] \colon \Gamma \bs X^+ \to \Gamma_\gH \bs Y^+ \]
is finite and surjective by \cite[Facts~2.6]{pink:conj}.

\subsection{Orthogonal Shimura varieties} \label{ssec:orthogonal-svs}

Moduli spaces of K3 surfaces are closely related to 
Shimura varieties of abelian type associated with orthogonal groups.

Let \( \Lambda \) be a \defterm{lattice}, that is, a finitely generated free \( \bZ \)-module equipped with a non-degenerate symmetric bilinear form \( \psi \colon \Lambda \times \Lambda \to \bZ \).
For any ring \( R \), we shall write \( \Lambda_R = \Lambda \otimes_\bZ R \).
The \defterm{orthogonal group} \( \rO(\Lambda) \) is the group of automorphisms of \( \Lambda \) which preserve the bilinear form~\( \psi \).
We write \( \gO(\Lambda)_\bQ \) for the \( \bQ \)-algebraic group whose functor of points is given by
\[ \gO(\Lambda)_\bQ(R) = \rO(\Lambda_R). \]
Let \( \rSO(\Lambda)\subset  \rO(\Lambda)\) be the subgroup of automorphisms with determinant \( +1 \).
The \( \bQ \)-algebraic group \( \gSO(\Lambda)_\bQ \) is defined in the obvious fashion.
Observe that \( \gSO(\Lambda)_\bQ \) is geometrically connected and absolutely almost simple as an algebraic group.

Each homomorphism \( h \colon \bS \to \gSO(\Lambda)_\bR \) induces a \( \bZ \)-Hodge structure \( \Lambda_h \) with underlying \( \bZ \)-module~\( \Lambda \).
Therefore it makes sense to define \( \Omega_\Lambda \) to be the set of \( h \in \Hom(\bS, \gSO(\Lambda)_\bR) \) which satisfy:
\begin{enumerate}
\item \( \dim \Lambda_h^{-1,1} = \dim \Lambda_h^{1,-1} = 1 \) and \( \dim \Lambda_h^{0,0} = \rk \Lambda - 2 \);
\item for every non-zero \( v \in \Lambda_h^{1,-1} \) we have \( \psi(v, v) = 0 \) and \( \psi(v, \bar{v}) > 0 \);
\item \( \psi(\Lambda_h^{0,0}, \Lambda_h^{1,-1}) = 0 \).
\end{enumerate}
By \cite[Proposition~6.1.2]{huybrechts:lectures}, the map \( h \mapsto \Lambda_h^{-1,1} \) is a bijection between \( \Omega_\Lambda \) and
\[ \{ [v] \in \bP(\Lambda_\bC) : \psi(v, v) = 0, \, \psi(v, \bar{v}) > 0 \}. \]
Note that we have shifted the labelling of the Hodge structures to have type \( \{ (1,-1), (0,0), (-1,1) \} \) instead of type \( \{ (2,0), (1,1), (0,2) \} \) as in \cite[Proposition~6.1.2]{huybrechts:lectures}.
This is necessary to ensure that the associated homomorphisms \( \bS \to \gGL(\Lambda)_\bR \) factor through \( \gSO(\Lambda)_\bR \).

If \( \Lambda \) has signature \( (2, n) \), then \( (\gSO(\Lambda)_\bQ, \Omega_\Lambda) \) forms a Shimura datum.
One can use the Kuga--Satake construction to show that this is a Shimura datum of abelian type:
\( (\gSO(\Lambda)_\bQ, \Omega_\Lambda) \) can be covered by a Shimura datum associated with the group \( \gGSpin(\Lambda)_\bQ \), and this embeds into the Shimura datum \( (\gGSp_{2g}, \Hg^{\pm}) \) where \( g = 2^n = 2^{\rk \Lambda - 2} \)
(for more details, see \cite[section~5.5]{rizov:kuga-satake}).

\subsection{Finiteness theorem for CM points}

We recall the definition of CM points in a Shimura variety component.
Let \( (\gG, X) \) be a Shimura datum and let \( S \) be a Shimura variety component whose \( \bC \)-points are \( \Gamma \bs X^+ \).
A point \( s \in S(\bC) \) is said to be a \defterm{CM point} if it is the image (under \( X^+ \to \Gamma \bs X^+ \)) of a homomorphism \( h \in X^+ \) which factors through \( \gT_\bR \) for some \( \bQ \)-torus \( \gT \subset \gG \).
This terminology is used because CM points in~\( \Ag \) are precisely the points which correspond to abelian varieties of CM type.
It is part of the definition of canonical models of Shimura varieties that CM points are defined over number fields.

\begin{proposition} \label{finiteness-cm-in-abelian-type}
Let \( S \) be a Shimura variety component of abelian type.
For every positive integer~\( n \) the set of CM points in~\( S \) defined over number fields of degree~\( n \) is finite.
\end{proposition}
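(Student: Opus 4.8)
The plan is to reduce the statement to the Siegel case $\Ag$ and then invoke the finiteness of principally polarised abelian varieties of CM type, Proposition~\ref{g-cm-pp-ab-vars}. First I would use the definition of abelian type to produce a Hodge-type covering of $S$. There is a Shimura datum $(\gG,X)$ of Hodge type together with a morphism of Shimura data $p\colon(\gG,X)\to(\gH,Y)$ whose underlying homomorphism is surjective with central kernel, such that $S$ is a component attached to $(\gH,Y)$ and some congruence subgroup $\Gamma_\gH\subset\gH(\bQ)_+$ (when $S$ is already of Hodge type one takes $p$ to be the identity); and Hodge type furnishes an embedding of Shimura data $i\colon(\gG,X)\hookrightarrow(\gGSp_{2g},\Hg^{\pm})$. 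Choosing a congruence subgroup $\Gamma'\subset\gG(\bQ)_+$ small enough that $p(\Gamma')\subset\Gamma_\gH$ and $i(\Gamma')\subset\Sp_{2g}(\bZ)$, I obtain a Shimura variety component $S'$ with $S'(\bC)=\Gamma'\bs X^+$, a finite morphism $[i]\colon S'\to\Ag$ (by \cite[Prop.~3.8(a)]{pink:thesis}) and a finite surjective morphism $[p]\colon S'\to S$ (by \cite[Facts~2.6]{pink:conj}). The point to verify is that CM points are respected. If $h\in X^+$ factors through a $\bQ$-torus $\gT\subset\gG$, then $i\circ h$ factors through the $\bQ$-torus $i(\gT)$, so $[i]$ carries CM points to CM points. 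Conversely, any preimage $s'=[h']$ under $[p]$ of a CM point of $S$ is again a CM point: then $p\circ h'$ is $\gH(\bQ)$-conjugate to a homomorphism factoring through a $\bQ$-torus, so the image in $\gH$ of the Mumford--Tate group of $h'$ is commutative; its derived group is therefore semisimple and contained in the central kernel of $p$, hence trivial, so the Mumford--Tate group of $h'$ is itself commutative and $s'$ is a CM point.

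Next I would control fields of definition. Fix a CM point $s\in S$ defined over a number field $k$ with $[k:\bQ]=n$. Since $[p]$ is finite there is a constant $N$, depending only on $[p]$, such that $[\kappa(s'):\kappa(s)]\le N$ for every closed point $s'$ of $S'$ lying over a closed point $s$ of $S$; and since $[p]$ is surjective, $s$ has such a preimage $s'$, which is a CM point by the previous paragraph, defined over a number field of degree at most $Nn$. Then $[i](s')\in\Ag$ is a CM point defined over a number field of degree at most $Nn$, so it corresponds to a principally polarised abelian variety $(A,\lambda)$ of dimension $g$ of CM type over $\Qalg$. As $\Ag$ is only a coarse moduli space, $(A,\lambda)$ need not be defined over the residue field of $[i](s')$, but lifting along the finite morphism from the fine moduli scheme of principally polarised abelian varieties with full level-$3$ structure shows that $(A,\lambda)$ is defined over a number field whose degree is bounded in terms of $n$ and $g$ only. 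By Proposition~\ref{g-cm-pp-ab-vars} (applied for each degree up to that bound) the $\Qalg$-isomorphism class of $(A,\lambda)$ ranges over a finite set independent of $s$; equivalently, $[i](s')$ ranges over a finite subset $T\subset\Ag(\Qalg)$ independent of $s$. Since $[i]$ is finite, $[i]^{-1}(T)$ is a finite set of points, so $s'$ ranges over a finite set, and applying $[p]$ the original point $s$ ranges over the finite set $[p]([i]^{-1}(T))$. This proves the proposition.

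The only deep input is Proposition~\ref{g-cm-pp-ab-vars}, which rests on Tsimerman's bound; everything else is formal within the theory of Shimura varieties. The step that needs the most care is the translation between the Shimura-variety and abelian-variety languages: checking that CM points go to CM points under $[i]$ and $[p]$, propagating the degree bounds through the finite morphisms, and --- most delicately --- converting a CM point of the coarse space $\Ag$ defined over a number field of bounded degree into an honest abelian variety of CM type defined over a number field of bounded degree, which is precisely where one must introduce level structures and a fine moduli space. Beyond this bookkeeping there is no genuine geometric obstacle.
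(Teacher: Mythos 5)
Your proof is correct and follows essentially the same route as the paper's: reduce from abelian type to Hodge type via the covering $[p]\colon S'\to S$, then push into $\Ag$ via the finite morphism $[i]$, and invoke Proposition~\ref{g-cm-pp-ab-vars}. Two places where you are more careful than the published argument are worth noting. First, the paper simply asserts that, because $\ker p$ is central, a point of $S'$ is CM if and only if its image in $S$ is CM; you supply the Mumford--Tate group argument for the nontrivial direction (the derived group of $M$ maps into the central kernel, hence is a commutative semisimple group, hence trivial), which is the correct justification. Second, the paper declares that Proposition~\ref{g-cm-pp-ab-vars} ``is precisely'' the statement of Proposition~\ref{finiteness-cm-in-abelian-type} for $\Ag$, silently eliding the coarse-moduli subtlety that a CM point of $\Ag$ rational over a degree-$n$ field need not arise from a principally polarised abelian variety over a degree-$n$ field; your detour through the fine moduli scheme with full level-$3$ structure is a clean way to obtain a ppav over a field of degree bounded in terms of $n$ and $g$, after which Proposition~\ref{g-cm-pp-ab-vars} applies (for all degrees up to that bound). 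So the proposal is not merely correct but fills in two small points that the paper leaves implicit; the overall strategy, key lemmas, and use of finiteness of $[i]$ and $[p]$ are the same as in the paper.
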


\begin{proof}
Proposition~\ref{g-cm-pp-ab-vars} is precisely the statement of this proposition for \( \Ag \).
We will use this to prove the proposition for other cases.

Consider a Shimura variety component \( S \) of abelian type.
Let \( (\gH, Y) \) be the associated Shimura datum.
By the definition of Shimura data of abelian type, there exists a Shimura datum \( (\gG, X) \) of Hodge type and a morphism of Shimura data
\[ p \colon (\gG, X) \to (\gH, Y) \]
such that the underlying homomorphism of algebraic groups is surjective and has kernel contained in the centre of~\( \gG \).
Because \( (\gG, X) \) is of Hodge type, there is an injective morphism of Shimura data
\[ i \colon (\gG, X) \to (\gGSp_{2g}, \Hg^{\pm}) \]
for some positive integer~\( g \).

From the definition of Shimura variety components, \( S(\bC) = \Gamma_\gH \bs Y^+ \) for some connected component \( Y^+ \subset Y \) and some congruence subgroup \( \Gamma_\gH \subset \gH(\bQ)_+ \).
Let \( X^+ \) be a connected component of \( X \) which maps onto \( Y^+ \).

By \cite[Lemma~I.3.1.1(ii)]{margulis:discrete-subgroups} there exist congruence subgroups \( \Gamma_1, \Gamma_2 \subset \gG(\bQ)_+ \) such that
\[ i(\Gamma_1) \subset \Sp_{2g}(\bZ) \text{ and } p(\Gamma_2) \subset \Gamma_\gH. \]
Let \( \Gamma = \Gamma_1 \cap \Gamma_2 \).
Let \( S' \) be the Shimura variety component whose \( \bC \)-points are \( \Gamma \bs X^+ \).
As discussed in Section~\ref{ssec:hodge-abelian-type}, \( p \) induces a finite surjective morphism \( [p] \colon S' \to S \).
Let \( d \) be the degree of $[p]$.
Because the kernel of \( p \) is contained in the centre of \( \gG \), a point \( s \in S'(\bC) \) is a CM point if and only if \( [p](s) \) is a CM point of~\( S \).
Hence every CM point of \( S \) defined over a number field of degree~\( n \) has a preimage in \( S' \) which is a CM point defined over a number field of degree~\( dn \).
Thus it suffices to show that \( S' \) has finitely many CM points defined over number fields of degree~\( dn \).

As discussed in Section~\ref{ssec:hodge-abelian-type}, \( i \) induces a finite morphism \( [i] \colon S' \to \Ag \).
Any morphism of Shimura variety components induced by a morphism of Shimura data maps CM points to CM points.
Because the proposition holds for~\( \Ag \) and because \( [i] \) is finite, \( S' \) has finitely many CM points defined over number fields of degree~\( dn \).
This completes the proof of the proposition.
\end{proof}

\section{Finiteness theorem for K3 surfaces of CM type} \label{sec:k3}

In this section, we prove our finiteness theorem for K3 surfaces of CM type defined over number fields.

\begin{theorem} \label{finiteness-cm-k3}
For each positive integer \( n \) there are only finitely many \( \Qalg \)-iso\-morphism classes of K3 surfaces of CM type defined over number fields of degree~\( n \).
\end{theorem}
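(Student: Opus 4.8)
The plan is to reduce the statement to Proposition~\ref{finiteness-cm-in-abelian-type} applied to orthogonal Shimura varieties, using the Kuga--Satake construction and the description (due to Rizov and Madapusi Pera) of moduli spaces of polarised K3 surfaces as open subsets of Shimura varieties of abelian type. The key difficulty to overcome is that Proposition~\ref{finiteness-cm-in-abelian-type} as stated gives finiteness only for a \emph{fixed} Shimura variety component, whereas K3 surfaces of CM type can carry polarisations of arbitrarily large degree, and the degree of the polarisation determines which orthogonal Shimura variety one lands in. So the main obstacle is to ``uniformise'' over the degree of the polarisation.

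First I would recall the general setup: a K3 surface $X$ over a number field $k$ has a primitive ample class, giving a polarisation whose degree $2d$ can a priori be large; the moduli space $\mathcal{M}_{2d}$ of primitively $2d$-polarised K3 surfaces admits a period map to a Shimura variety component $S_{2d}$ attached to $(\gSO(\Lambda_{2d})_\bQ, \Omega_{\Lambda_{2d}})$, where $\Lambda_{2d}$ has signature $(2,19)$, and this period map is an open immersion onto a union of components after a finite base change. A K3 surface of CM type maps to a CM point of $S_{2d}$, and if $X$ is defined over a number field of degree $n$ then the corresponding CM point is defined over a number field of degree bounded in terms of $n$ (using the Torelli theorem and the fact that the period point, together with its Galois action, is determined by $X$ up to controlled ambiguity). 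For each fixed $d$, Proposition~\ref{finiteness-cm-in-abelian-type} then yields finitely many such CM points, hence finitely many $\Qalg$-isomorphism classes of K3 surfaces of CM type with a polarisation of degree $2d$ definable over a field of degree $n$.

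The crucial step is then to bound $d$. Here I would invoke the lattice-theoretic device promised in the introduction — the analogue of Zarhin's trick for K3 surfaces, developed following a suggestion of Charles and related to \cite{charles:k3-zarhin}. The idea is that one can embed the relevant transcendental or primitive lattice data into a fixed lattice $\Lambda$ of signature $(2, N)$ independent of $d$: given $X$ of CM type, the Hodge structure on the transcendental lattice $T(\ov X)$ has rank at most $20$, and by enlarging (adding hyperbolic planes, or passing to a suitable fixed-rank overlattice into which all $T(\ov X)$ of bounded discriminant embed) one realises all the relevant CM Hodge structures as CM points of a single orthogonal Shimura variety component $S$ of abelian type attached to $\Lambda$. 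One must check that the associated point of $S$ is still defined over a number field of degree bounded in terms of $n$ — this uses that the embedding of lattices can be chosen with controlled arithmetic complexity, or that one works with the isometry class of a fixed lattice and the finitely many embeddings up to the action of $\rO(\Lambda)$. Applying Proposition~\ref{finiteness-cm-in-abelian-type} to this single $S$ gives finitely many CM points, hence finitely many possible CM Hodge structures on $H^2$, hence (by the Torelli theorem for K3 surfaces, which recovers $\ov X$ up to $\Qalg$-isomorphism from its polarised Hodge structure together with the Néron--Severi lattice and the ample cone) finitely many $\Qalg$-isomorphism classes of $X$. The passage from ``finitely many Hodge structures'' to ``finitely many $\Qalg$-isomorphism classes'' requires knowing that $\NS(\ov X)$ takes finitely many values and that each Hodge-theoretic datum corresponds to boundedly many K3 surfaces; the former follows because $\NS(\ov X)$ is the orthogonal complement of $T(\ov X)$ inside the K3 lattice, and the latter from Torelli.

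I expect the main obstacle to be precisely the construction of the single uniform Shimura variety component $S$ and the verification that the CM point attached to an arbitrary CM K3 surface $X/k$ with $[k:\bQ] = n$ lies on $S$ and is defined over a number field of degree $O_n(1)$: one has to ensure both that the lattice embedding exists uniformly (a finiteness statement about primitive embeddings of lattices of rank $\le 20$ into a fixed lattice, for which one needs the relevant discriminants to be controlled — but a priori they are \emph{not} controlled, which is the whole point of the difficulty) and that no arithmetic is lost in the reduction. Resolving this may require running the argument in two stages: first handle polarised K3 surfaces of fixed polarisation degree via $S_{2d}$ and the moduli interpretation, then use the Zarhin-type trick at the level of Shimura data — a morphism relating $S_{2d}$ for all $d$ to a common $S$ — to eliminate the dependence on $d$, mirroring how in the abelian variety case one passes from Proposition~\ref{g-cm-pp-ab-vars} to Theorem~\ref{g-cm-ab-vars} via $(A \times A^\vee)^4$.
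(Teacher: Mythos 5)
Your overall strategy matches the paper's: for each $d$, embed $\tilde{M}_{2d}$ in an orthogonal Shimura variety component $S_{2d}$ of abelian type, use Proposition~\ref{finiteness-cm-in-abelian-type} to control CM points of bounded degree, and then construct a single Shimura variety component $S_\#$ (via lattice embeddings, analogous to Zarhin's trick) receiving finite maps $S_{2d}\to S_\#$ for all $d$. The paper does exactly this: by Nikulin's embedding theorem (Theorem~\ref{nikulin:lattice-embeddings}), each $\Lambda_{2d}$ admits a primitive embedding into the even unimodular lattice $\Lambda_\# = E_8(-1)^{\oplus 3}\oplus U^{\oplus 2}$ of signature $(2,26)$ — the condition is only that $s(A_{\Lambda_{2d}})=1$, which holds for every $d$ regardless of the size of the discriminant, so the worry you raise about uncontrolled discriminants obstructing the embedding is not the real difficulty.

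The genuine gap is in your final step: ``Applying Proposition~\ref{finiteness-cm-in-abelian-type} to this single $S$ gives finitely many CM points, hence finitely many possible CM Hodge structures on $H^2$, hence ... finitely many $\Qalg$-isomorphism classes of $X$.'' The first ``hence'' is false. A CM point of $S_\#$ determines a Hodge structure on $\Lambda_\#$, but not the sub-Hodge structure on a rank-$21$ sublattice $\Lambda_{2d}$, because there are infinitely many possible $d$ and infinitely many possible primitive embeddings. Indeed the paper explicitly observes that a single point of $S_\#$ typically lies in the image of infinitely many $S_{2d}$, and the union over all $d$ of CM points of bounded degree in $S_{2d}$ is infinite (take any CM K3 surface and use integer combinations of two independent polarisations to produce primitive polarisations of arbitrarily large degree over a fixed number field). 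So finiteness of CM points in $S_\#$ does not by itself yield finiteness of Hodge structures on $H^2$, and the unpolarised Torelli theorem cannot be invoked at that stage.

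What closes the gap in the paper is a pair of lattice-theoretic statements you do not mention: Lemma~\ref{transcendental-isometry} shows that the point of $S_\#$ determines the \emph{transcendental} lattice $T(X)$ up to Hodge isometry — because $T(X)$ is recovered as the smallest primitive sub-$\Z$-Hodge structure of $H_s$ whose complexification contains the one-dimensional $(-1,1)$-part, and this description is intrinsic to $H_s$, independent of $d$ or of the chosen embedding $\iota_{2d}$. Then Proposition~\ref{finiteness-fm} (Bridgeland--Maciocia: a K3 surface has only finitely many Fourier--Mukai partners, i.e.\ finitely many K3 surfaces share a given Hodge isometry class of transcendental lattice) converts finiteness of transcendental lattices up to Hodge isometry into finiteness of K3 surfaces. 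Without these two ingredients — or some substitute for them — the passage from finitely many points of $S_\#$ to finitely many K3 surfaces does not go through.
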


We prove Theorem~\ref{finiteness-cm-k3} by using orthogonal Shimura varieties.
Before discussing the proof further, we recall the definition of a polarisation of a K3 surface.
Let \( X \) be a K3 surface over a perfect field \( k \).
A \defterm{polarisation} of \( X \) is
a \( k \)-point of the relative Picard scheme \( \Pic_{X/k} \)
(equivalently, by \cite[Proposition~1.2.4]{huybrechts:lectures}, an element of \( \NS(\ov X)^{\Gal(\bar k/k)} \))
which over \( \bar k \) is the class of a primitive ample line bundle on \( \ov X \).

For each positive integer \( d \) there is a coarse moduli space over \( \bQ \) of polarised K3 surfaces of degree~\( 2d \), which is a quasi-projective variety \( M_{2d} \) defined over \( \Q \) \cite[chapter~5]{huybrechts:lectures}.
There is a degree-\( 2 \) covering \( \tilde{M}_{2d}\to M_{2d} \) such that $\tilde{M}_{2d}$ is a Zariski open subset of an orthogonal Shimura variety component \( S_{2d} \).
The K3 surfaces of CM type are precisely those which correspond to CM points in~\( S_{2d} \).
The Shimura variety component \( S_{2d} \) is of abelian type, and therefore Proposition~\ref{finiteness-cm-in-abelian-type} tells us that each \( S_{2d} \) contains finitely many CM points defined over number fields of degree~\( n \).
This proves Theorem~\ref{finiteness-cm-k3} if we restrict to K3 surfaces with a polarisation of degree~\( 2d \).

However Theorem~\ref{finiteness-cm-k3} does not require an \textit{a priori} restriction on the degree of polarisation of the K3 surfaces involved.
Indeed, Theorem~\ref{finiteness-cm-k3} implies that there is a bound \( d(n) \) such that every CM K3 surface defined over a number field of degree~\( n \) possesses a polarisation (over \( \Qalg \)) of degree at most \( d(n) \).

In order to remove the dependence on the degree of the polarisation, we use Nikulin's results on lattices to construct a Shimura variety component \( S_\# \) associated with an orthogonal group of greater rank,
such that there is a finite map \( S_{2d} \to S_\# \) for every~\( d \).
The idea of constructing such an \( S_\# \) was suggested to the authors by Fran\c{c}ois Charles, who used a similar method in \cite{charles:k3-zarhin}.
Our construction of \( S_\# \) differs from the construction in \cite{charles:k3-zarhin} because we require \( S_{2d} \) to map into \( S_\# \) for every positive integer~\( d \), while \cite{charles:k3-zarhin} requires this only for an infinite set of values of~\( d \).
On the other hand, the Shimura variety constructed in \cite{charles:k3-zarhin} has an interpretation as a moduli space of irreducible holomorphic symplectic varieties whereas our \( S_\# \) does not appear to have a natural interpretation as a moduli space of geometric objects.

The Shimura variety component~\( S_\# \) is again of abelian type, and hence there are finitely many CM points in~\( S_\# \) defined over number fields of given degree.
This is not sufficient to prove Theorem~\ref{finiteness-cm-k3}, because a single point in \( S_\# \) might be in the image of \( S_{2d} \) for infinitely many values of~\( d \).
We shall use some calculations with Hodge structures to show that whenever different K3 surfaces correspond to the same point in \( S_\# \), they must have isometric transcendental lattices.
Finally a result of Bridgeland and Maciocia~\cite{bridgeland-maciocia} allows us to conclude that each point in \( S_\# \) can only come from finitely many K3 surfaces, completing the proof of Theorem~\ref{finiteness-cm-k3}.

\subsection{Comparison with the case of abelian varieties} \label{ssec:comparison-zarhin-trick}

The structure of the proof of Theorem~\ref{finiteness-cm-k3}, for K3 surfaces, can be compared with the proof of Theorem~\ref{g-cm-ab-vars}, for abelian varieties.
In both cases, we can use Proposition~\ref{finiteness-cm-in-abelian-type} to easily deduce that there are finitely many \( \Qalg \)-isomorphism classes of the appropriate object equipped with a polarisation of given degree.

In the abelian varieties case, in order to get a finiteness statement without restricting the degree of a polarisation, we used Zarhin's trick (Theorem~\ref{g-cm-ab-vars}).
This can be described in terms of Shimura varieties as follows.

Define a \defterm{polarisation type} to be a sequence of \( g \) positive integers \( (d_1, \dotsc, d_g) \) such that \( d_i \) divides \( d_{i+1} \) for each~\( i \).
For any polarised abelian variety \( (A, \lambda) \), the elementary divisors of the associated symplectic form on \( H_1(A(\bC), \bZ) \) form a polarisation type.

For each polarisation type~\( D = (d_1, \dotsc, d_g) \), let \( \cA_{g,D} \) denote the moduli space of abelian varieties of dimension~\( g \) with a polarisation of type~\( D \).
This is a Shimura variety component.
Zarhin's trick \cite[section~5.3]{Zar85} can be interpreted as constructing a morphism of Shimura variety components \( f_{g,D} \colon \cA_{g,D} \to \cA_{8g} \).
(Note that Zarhin's trick involves the choice of an integer quaternion of norm \( \prod_{i=1}^g d_g \).
We can make this choice once for each \( D \), thus ensuring that Zarhin's construction of a principal polarisation on \( (A \times A^\vee)^4 \) is sufficiently functorial to give us a morphism~\( f_{g,D} \) for each~\( D \).
Due to this choice, the morphisms \( f_{g,D} \) are not unique.)

Thus our construction of morphisms from \( S_{2d} \) to a single Shimura variety component~\( S_\# \) is analogous to Zarhin's trick.
The proof of Theorem~\ref{g-cm-ab-vars} (from Proposition~\ref{g-cm-pp-ab-vars}) plays the same role for the abelian varieties case as Lemma~\ref{transcendental-isometry} and Proposition~\ref{finiteness-fm} do for the K3 case.

\subsection{Moduli spaces of polarised K3 surfaces}

We define a Shimura variety component~\( S_{2d} \) as follows.
Let \( \Lambda_{2d} \) denote the lattice
\[ \Lambda_{2d} = E_8(-1)^{\oplus 2} \oplus U^{\oplus 2} \oplus \langle -2d \rangle. \]
The significance of this lattice is that if \( X \) is a K3 surface over~\( \bC \) and \( \lambda \in H^2(X, \bZ(1)) \) is a polarisation of degree~\( 2d \), then the orthogonal complement \( \lambda^\perp \subset H^2(X, \bZ(1)) \) is a lattice isomorphic to \( \Lambda_{2d} \) (where \( H^2(X, \bZ(1)) \) is equipped with the intersection pairing) -- see \cite[Example~14.1.11]{huybrechts:lectures}.
The lattice \( \Lambda_{2d} \) has signature \( (2, 19) \), so it gives rise to an orthogonal Shimura datum \( (\gSO(\Lambda_{2d})_\bQ, \Omega_{2d}) \). 

For any lattice~\( \Lambda \), the \defterm{dual lattice} is
\[ \Lambda^\vee = \{ v \in \Lambda_\bQ : \psi(v, \Lambda) \subset \bZ \}. \]
The \defterm{discriminant group} \( A_\Lambda \) is the quotient \( \Lambda^\vee / \Lambda \).
This is a finite abelian group.
The orthogonal group \( \rO(\Lambda) \) acts on \( A_\Lambda \).
We define \( \rOtil(\Lambda) \subset \rO(\Lambda) \) to be the kernel of the action on the discriminant group.

We write \( \Omega_{2d} \) for the period domain \( \Omega_{\Lambda_{2d}} \), as defined in Section~\ref{ssec:orthogonal-svs}.
Pick a connected component \( \Omega_{2d}^+ \subset \Omega_{2d} \).
Let
\[ \rSOtil(\Lambda_{2d})_+ = \rSO(\Lambda_{2d})_+ \cap \rOtil(\Lambda_{2d}). \]
Let \( S_{2d} \) be the Shimura variety component whose complex points are given by \( \rSOtil(\Lambda_{2d})_+ \bs \Omega_{2d}^+ \).
Knowing exactly which congruence subgroup is used to define \( S_{2d} \) will be important at the end of Section~\ref{sec:s-sharp}.
As discussed in \cite[Section~3.1]{madapusi-pera:tate-k3s}, this Shimura variety component is defined over~\( \bQ \).

As described in \cite[Corollary~6.4.3]{huybrechts:lectures}, the moduli space \( M_{2d,\bC} \) can be embedded as a Zariski open subset of the quotient \( \rOtil(\Lambda_{2d}) \bs \Omega_{2d} \).
This is a quotient of a Hermitian symmetric domain by an arithmetic group, and therefore is very similar to a Shimura variety.
However the reductive group used in the definition of a Shimura datum is required to be connected and therefore we must use \( \gSO(\Lambda_{2d})_\bQ \) rather than \( \gO(\Lambda_{2d})_\bQ \).
Hence \( M_{2d,\bC} \) does not itself embed in a Shimura variety component but rather we must use a double cover \( \tilde{M}_{2d} \to M_{2d} \) (corresponding to the fact that \( \rSOtil(\Lambda_{2d}) \) is an index-\( 2 \) subgroup of~\( \rOtil(\Lambda_{2d}) \)).

The double cover \( \tilde{M}_{2d} \) is defined to be the coarse moduli space of triples \( (X, \lambda, u) \) where \( X \) is a K3 surface, \( \lambda \) is a polarisation of \( X \) of degree~\( 2d \) and \( u \) is an isometry
\[ \det(P^2(X, \bZ_2(1))) \to \det(\Lambda_{2d} \otimes_\bZ \bZ_2). \]
Here \( P^2(X, \bZ_2(1)) \) denotes the orthogonal complement of the image of \( \lambda \) in the \( 2 \)-adic cohomology \( H^2(X, \bZ_2(1)) \).

There is an embedding of algebraic varieties \( \tilde{M}_{2d} \to S_{2d} \) which realises \( \tilde{M}_{2d} \) as a Zariski open subset of \( S_{2d} \).
The fact that this embedding is defined over~\( \bQ \) was essentially first proved by Rizov (\cite[Theorem~3.9.1]{rizov:kuga-satake}; see also \cite[Corollary~4.4]{madapusi-pera:tate-k3s}).

\subsection{Construction of \texorpdfstring{\( S_\# \)}{S\#}} \label{sec:s-sharp}

We now construct a Shimura variety component~\( S_\# \) such that there is a finite morphism \( f_{2d} \colon S_{2d} \to S_\# \) for every \( d \in \bN \).
We do this by producing a lattice \( \Lambda_\# \) with a primitive embedding \( \Lambda_{2d} \to \Lambda_\# \) for every~\( d \in \bN \), using the following theorem of Nikulin\footnote{A simpler construction based on Lagrange's theorem
was more recently used by Yiwei She in \cite[Lemma 3.3.1]{She}.}.

\begin{theorem}[\cite{nikulin:lattices} Corollary~1.12.3] \label{nikulin:lattice-embeddings}
Let \( \Lambda \) be an even lattice of signature \( (m_+, m_-) \) with discriminant group~\( A_\Lambda \). Let \( s(A_\Lambda) \) be the minimum size of a generating set for~\( A_\Lambda \).
There exists a primitive embedding of~\( \Lambda \) into an even unimodular lattice of signature \( (n_+, n_-) \) if the following conditions are simultaneously satisfied:
{\rm \begin{enumerate}[(i)]
\item \( n_+ - n_- \equiv 0 \pmod 8 \);
\item \( n_+ \geq m_+\), \( n_- \geq m_- \);
\item \( (n_+ + n_-) - (m_+ + m_-) \geq s(A_\Lambda) \).
\end{enumerate}}
\end{theorem}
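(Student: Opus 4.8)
The plan is to deduce this from Nikulin's theory of discriminant forms, by converting the question of a primitive embedding into a unimodular lattice into the question of existence of a single lattice with a prescribed discriminant form. Recall that every even lattice $\Lambda$ carries a finite quadratic form $q_\Lambda \colon A_\Lambda \to \bQ/2\bZ$ on its discriminant group. The key reduction is this: a primitive embedding of $\Lambda$ into an even unimodular lattice $L$ of signature $(n_+, n_-)$ exists if and only if there is an even lattice $M$ of signature $(n_+ - m_+,\, n_- - m_-)$ whose discriminant form $q_M$ is isomorphic to $-q_\Lambda$. One implication is clear, since the orthogonal complement of $\Lambda$ inside such an $L$ is exactly such an $M$. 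For the other, given $M$ one chooses an anti-isometry $\gamma \colon A_\Lambda \xrightarrow{\ \sim\ } A_M$ of discriminant groups (which exists precisely because $q_M \cong -q_\Lambda$) and lets $L$ be the overlattice of $\Lambda \oplus M$ obtained by adjoining the graph of $\gamma$; since that graph is its own orthogonal complement inside the discriminant form of $\Lambda \oplus M$, the lattice $L$ is even, its discriminant form is trivial so $L$ is unimodular, its signature is $(n_+, n_-)$, and $\Lambda$ sits primitively inside it. This is the gluing construction of \cite{nikulin:lattices}, \S1.5--1.6.

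It then remains to produce an even lattice $M$ of signature $(k_+, k_-) := (n_+ - m_+,\, n_- - m_-)$ with discriminant form $-q_\Lambda$ on a group isomorphic to $A_\Lambda$, and for this I would appeal to Nikulin's existence criterion for even lattices with a given signature and discriminant form (\cite{nikulin:lattices}, Theorem~1.10.1). The three hypotheses of the statement match the global part of that criterion. Condition (ii) is exactly the requirement $k_+ \geq 0$ and $k_- \geq 0$. For the signature: by Milgram's formula the finite quadratic form $q_\Lambda$ has signature congruent to $m_+ - m_-$ modulo $8$, so $-q_\Lambda$ has signature congruent to $-(m_+ - m_-)$; a lattice of signature $(k_+, k_-)$ can carry such a form only when $k_+ - k_- \equiv -(m_+ - m_-) \pmod 8$, and since $k_+ - k_- = (n_+ - n_-) - (m_+ - m_-)$ this congruence is equivalent to $n_+ - n_- \equiv 0 \pmod 8$, which is condition (i). Finally, the discriminant group of a rank-$r$ lattice is generated by $r$ elements, so necessarily $k_+ + k_- \geq s(A_\Lambda)$, which is condition (iii).

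The substantive part of the argument — and the step I expect to be the main obstacle — is the local component of Nikulin's criterion: over and above the global conditions above, one must verify that at every prime $p$ the $p$-part of $-q_\Lambda$ is realised by some $\bZ_p$-lattice of the correct $p$-adic signature, and that these $p$-adic realisations can be glued to a global lattice. When the inequality in (iii) is strict there is spare rank, so the local forms can be absorbed into unimodular $\bZ_p$-summands and the local conditions hold automatically; the delicate situation is the equality case $k_+ + k_- = s(A_\Lambda)$, where a quadratic form requiring the full number of generators must be realised in minimal rank. Carrying this out is precisely the core of Nikulin's analysis (the proof of \cite{nikulin:lattices}, Theorem~1.12.2, resting on the classification of quadratic and bilinear forms over $\bZ_p$ in \S1.7--1.8 of loc.\ cit.), and in the present paper it is reasonable to invoke it as a black box — all the more so because in our applications the inequalities (i)--(iii) hold with room to spare.
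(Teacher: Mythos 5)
The paper does not prove this statement: it is quoted verbatim as Nikulin's Corollary~1.12.3 and used as a black box, so there is no internal proof to compare against. Your reconstruction is, however, a faithful outline of how Nikulin actually proves it. The reduction of the primitive-embedding problem to the existence of a complementary even lattice $M$ of signature $(n_+-m_+,\,n_--m_-)$ with discriminant form $-q_\Lambda$ is Nikulin's Proposition~1.6.1; the gluing construction via the graph of an anti-isometry $\gamma\colon A_\Lambda\to A_M$ is correct, and one should note (as you implicitly use) that primitivity of $\Lambda$ in the resulting overlattice $L$ follows because the gluing subgroup is the graph of an isomorphism, so no element of $A_\Lambda\oplus\{0\}$ other than zero lies in it. Your verification that condition (i) is exactly the Milgram signature constraint on $-q_\Lambda$, that (ii) is the nonnegativity of the complementary signature, and that (iii) is the necessary rank bound $\rk M\ge s(A_\Lambda)$, is all correct arithmetic. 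You are also right to identify the $p$-adic existence conditions in Nikulin's Theorem~1.10.1 as the substantive core, right that these are automatic when (iii) is strict (the extra rank supplies unimodular $\bZ_p$-factors that kill the local obstructions), and right that the borderline equality case is where the real work in Nikulin's \S1.7--1.12 lives. Invoking that analysis as a black box is appropriate here, and matches the level at which the paper itself uses the result (in the application $\Lambda=\Lambda_{2d}$ one has $s(A_\Lambda)=1$ while the corank is $7$, so the delicate equality case never arises).
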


In the case \( \Lambda = \Lambda_{2d} \) the discriminant group is \( \bZ/2d\bZ \) and so \( s(A_{\Lambda_{2d}}) = 1 \).
Thus in order to apply this theorem to obtain embeddings \( \Lambda_{2d} \hookrightarrow \Lambda_\# \), the signature \( (n_+, n_-) \) of \( \Lambda_\# \) must satisfy
\[
n_+ - n_- \equiv 0 \pmod 8; \qquad
n_+ \geq 2, \  n_- \geq 19; \qquad
n_+ + n_-\geq 22.
\]
Furthermore, in order for \( \Lambda_\# \) to give rise to an orthogonal Shimura variety component, we must have \( n_+ = 2 \).
The conditions therefore reduce to
$n_- \equiv 2 \pmod 8$ and $n_- \geq 20$, so
we can choose \( n_- = 26 \) to satisfy them.

According to \cite[Chapter~V, Theorem~5]{serre:cours-d-arithmetique}, there is a unique even unimodular lattice of signature \( (2, 26) \), namely
\[ \Lambda_\# = E_8(-1)^{\oplus 3} \oplus U^{\oplus 2}. \]
Since $\Lambda_\#$ is unimodular, we have $\rSO(\Lambda_\#)=\rSOtil(\Lambda_\#)$.
By Theorem~\ref{nikulin:lattice-embeddings}, there is a primitive embedding \( \iota_{2d} \colon \Lambda_{2d} \hookrightarrow \Lambda_\# \) for every \( d \in \bN \).
These embeddings are not unique -- we simply pick one for each~\( d \).

The embedding \( \iota_{2d} \) induces an injection of special orthogonal groups over~\( \bQ \)
\[ r_{2d} \colon \gSO(\Lambda_{2d})_\bQ \to \gSO(\Lambda_\#)_\bQ, \]
extending isometries of \( \Lambda_{2d,\bQ} \) to \( \Lambda_{\#,\bQ} \) by letting them act trivially on the orthogonal complement of \( \Lambda_{2d,\bQ} \).
Hence we get an injective morphism of Shimura data
\[ (\gSO(\Lambda_{2d})_\bQ, \Omega_{2d}) \to (\gSO(\Lambda_\#)_\bQ, \Omega_\#), \]
where we write \( \Omega_\# = \Omega_{\Lambda_\#} \). 

Let \( \Omega_\#^+ \) be the connected component of \( \Omega_\# \) which contains the image of \( \Omega_{2d}^+ \).
Let \( S_\# \) be the Shimura variety component whose \( \bC \)-points are \( \rSO(\Lambda_\#)_+ \bs \Omega_\#^+ \).

According to \cite[Proposition~14.2.6]{huybrechts:lectures}, \( r_{2d} \) maps the congruence subgroup \( \rSOtil(\Lambda_{2d}) \) into \( \rSO(\Lambda_\#) \).
Hence \( r_{2d} \) induces a morphism of Shimura variety components
\[ f_{2d} \colon S_{2d} \to S_\#. \]
Because \( r_{2d} \) is an injective homomorphism of algebraic groups, \( f_{2d} \) is finite by \cite[Proposition~3.8(a)]{pink:thesis}.

Note that \( r_{2d} \) does not map \( \rSO(\Lambda_{2d}) \) into \( \rSO(\Lambda_\#) \), because the embedding \( \Lambda_{2d} \hookrightarrow \Lambda_\# \) is not split over \( \bZ \).
Thus it is important to us exactly which congruence subgroup is used in defining the Shimura variety component \( S_{2d} \) (namely \( \rSOtil(\Lambda_{2d})_+ \)), and that \( r_{2d} \) maps this subgroup into \( \rSO(\Lambda_\#)_+ \), so that we can deduce that the Shimura variety components \( S_{2d} \) (and not just covers of them) map into~\( S_\# \).

\subsection{The transcendental lattice and \texorpdfstring{\( S_\# \)}{S\#}}

By Proposition~\ref{finiteness-cm-in-abelian-type}, \( S_\# \) has finitely many CM points defined over number fields of given degree~\( n \).
However this is not enough to establish that 
\[ \bigcup_{d \in \bN} \{ x \in S_{2d} : x \text{ is a CM point and } x \text{ is defined over a number field of degree } n \} \]
is finite, because a single point of \( S_\# \) might lie in the image of infinitely many~\( S_{2d} \).

Indeed, the union described above is infinite.
To see this, consider any K3 surface~\( X \) of CM type.
Then \( \NS(\ov X) \) has even rank and in particular has rank at least two.
Pick two linearly independent polarisations \( \lambda_1 \), \( \lambda_2 \) of~\( \ov X \) and choose a number field~\( k \) over which both polarisations are defined.
Now infinitely many integer combinations of these polarisations will be primitive elements of \( \NS(X_k) \).
It follows that \( X_k \) has polarisations of arbitrarily large degree, giving rise to CM points in infinitely many varieties \( S_{2d} \) all defined over the same number field.

If \( X \) is a K3 surface over \( \bC \), its \defterm{transcendental lattice} \( T(X) \) is the orthogonal complement of the Néron--Severi lattice \( \NS(X) \) in \( H^2(X, \bZ(1)) \).

We shall show that if two polarised K3 surfaces give rise to the same point in \( S_\# \), then their transcendental lattices are Hodge isometric -- that is, isomorphic in the category of \( \bZ \)-Hodge structures with a quadratic form.
A result of Bridgeland and Maciocia \cite{bridgeland-maciocia} says that only finitely many K3 surfaces can have transcendental lattices in a given Hodge isometry class.
Thus the points in all the moduli spaces \( \tilde{M}_{2d} \) which map to a single point in~\( S_\# \) can only be associated with finitely many isomorphism classes of complex K3 surfaces (forgetting the polarisations).

\begin{lemma} \label{transcendental-isometry}
Let \( (X, \lambda) \) and \( (X', \lambda') \) be polarised K3 surfaces over \( \bC \) of degrees \( 2d \) and~\( 2d' \) respectively.
Let \( x \in \tilde{M}_{2d}(\bC) \) and \( x' \in \tilde{M}_{2d'}(\bC) \) be points whose images in the moduli spaces \( M_{2d}(\bC) \) and \( M_{2d'}(\bC) \) correspond to \( (X, \lambda) \) and 
\( (X', \lambda') \), respectively.
If \( f_{2d}(x) = f_{2d'}(x') \) in \( S_\#(\bC) \), then the transcendental lattices \( T(X) \) and \( T(X') \) are Hodge isometric.
\end{lemma}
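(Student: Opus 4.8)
The plan is to pin down, purely in terms of the Hodge structure that the point \( f_{2d}(x) \) carries on \( \Lambda_\# \), the sub-Hodge-structure coming from \( T(X) \), and then to observe that an isometry of \( \Lambda_\# \) witnessing \( f_{2d}(x) = f_{2d'}(x') \) is forced to restrict to a Hodge isometry between the two copies of the transcendental lattice.

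First I would set up the dictionary between points and markings. A point \( x \in \tilde M_{2d}(\bC) \) lying over \( (X,\lambda) \) determines a marking, i.e.\ an isometry \( \phi_x \colon P^2(X,\bZ(1)) \xrightarrow{\sim} \Lambda_{2d} \) carrying the Hodge structure on \( P^2(X,\bZ(1)) \) to a point \( h_x \in \Omega_{2d}^+ \); by the construction of \( S_{2d} \) this marking is well defined up to the action of \( \rSOtil(\Lambda_{2d})_+ \). Composing with the primitive embedding \( \iota_{2d} \) gives a primitive embedding \( \psi_x = \iota_{2d}\circ\phi_x \colon P^2(X,\bZ(1)) \hookrightarrow \Lambda_\# \), well defined up to \( \rSO(\Lambda_\#)_+ \) because \( r_{2d} \) maps \( \rSOtil(\Lambda_{2d})_+ \) into \( \rSO(\Lambda_\#)_+ \) (precisely the point emphasised at the end of Section~\ref{sec:s-sharp}). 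Unwinding the definition of \( f_{2d} \) on complex points, the period-domain map \( \Omega_{2d}^+ \to \Omega_\#^+ \) sends \( h_x \) to the Hodge structure \( H_x \) on \( \Lambda_\# \) which restricts along \( \psi_x \) to the given Hodge structure on \( P^2(X,\bZ(1)) \) and is of pure type \( (0,0) \) on the \( \bQ \)-orthogonal complement of \( \psi_x(P^2(X,\bZ(1)))_\bQ \). The same applies to \( x' \), and since \( \Lambda_\# \) is unimodular we have \( \rSO(\Lambda_\#)_+ = \rSOtil(\Lambda_\#)_+ \), so \( f_{2d}(x) = f_{2d'}(x') \) means exactly that there is \( g \in \rSO(\Lambda_\#)_+ \) carrying \( H_{x'} \) to \( H_x \).

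The heart of the argument is that the primitive sublattice \( \psi_x(T(X)) \subset \Lambda_\# \) depends only on the pair \( (\Lambda_\#, H_x) \). The key lattice-theoretic fact is that \( T(X)_\bQ \) is the smallest \( \bQ \)-sub-Hodge-structure \( W \) of \( H^2(X,\bQ(1)) \) whose complexification contains the line \( L = H^2(X,\bC(1))^{1,-1} \): for any such \( W \) the orthogonal complement \( W^\perp \) meets neither \( L \) nor \( \ov L = H^2(X,\bC(1))^{-1,1} \), since a non-zero \( v \in L \cap W^\perp_\bC \) would be orthogonal to \( \ov v \in \ov L \subseteq W_\bC \), contradicting \( \psi(v,\ov v) > 0 \); hence \( W^\perp \) is of pure type \( (0,0) \), so \( W^\perp \subseteq \NS(X)_\bQ \) by the Lefschetz theorem on \( (1,1) \)-classes, whence \( W \supseteq \NS(X)_\bQ^\perp = T(X)_\bQ \); and \( T(X)_\bQ \) itself qualifies, because \( T(X) = \NS(X)^\perp \) lies in \( \lambda^\perp = P^2(X,\bZ(1)) \) and contains \( L \). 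Now \( \psi_x(P^2(X,\bZ(1)))_\bQ \) is a \( \bQ \)-sub-Hodge-structure of \( (\Lambda_{\#,\bQ}, H_x) \) whose complexification contains the line \( (\Lambda_\#)_{H_x}^{1,-1} \), so the same minimality identifies \( \psi_x(T(X))_\bQ \) with the smallest \( \bQ \)-sub-Hodge-structure of \( \Lambda_{\#,\bQ} \) whose complexification contains \( (\Lambda_\#)_{H_x}^{1,-1} \). Since \( T(X) \) is primitive in \( H^2(X,\bZ(1)) \) and \( \iota_{2d} \) is a primitive embedding, \( \psi_x(T(X)) \) is primitive in \( \Lambda_\# \), so \( \psi_x(T(X)) = \psi_x(T(X))_\bQ \cap \Lambda_\# \), which exhibits it as intrinsic to \( (\Lambda_\#, H_x) \); the identical description holds for \( \psi_{x'}(T(X')) \).

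Finally I would conclude: \( g \) carries the line \( (\Lambda_\#)_{H_{x'}}^{1,-1} \) onto \( (\Lambda_\#)_{H_x}^{1,-1} \), hence carries the minimal transcendental sub-Hodge-structure of \( (\Lambda_\#, H_{x'}) \), namely \( \psi_{x'}(T(X')) \), isometrically and Hodge-compatibly onto that of \( (\Lambda_\#, H_x) \), namely \( \psi_x(T(X)) \). Therefore \( \psi_x^{-1}\circ g\circ\psi_{x'} \colon T(X') \xrightarrow{\sim} T(X) \) is a Hodge isometry, which is the assertion of the lemma. The step I expect to be the main obstacle is not the Hodge theory but the bookkeeping in the second paragraph: one must check that the marking attached to a point of \( \tilde M_{2d} \) is canonical up to exactly \( \rSOtil(\Lambda_{2d})_+ \) and no larger group, and that this ambiguity lands inside \( \rSO(\Lambda_\#)_+ \), so that \( H_x \) is genuinely well defined as a point of \( S_\# \); once this dictionary is in place the remaining manipulations with sub-Hodge-structures and primitive sublattices are routine.
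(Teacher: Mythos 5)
Your proof is correct and follows the same overall strategy as the paper's: characterise $\psi_x(T(X))$ intrinsically as the smallest primitive sub-$\bZ$-Hodge-structure of $(\Lambda_\#, H_x)$ whose complexification contains the one-dimensional $(1,-1)$-piece (equivalently the $(-1,1)$-piece, as both generate the same $\bQ$-sub-Hodge-structure), observe that the identical description holds for $\psi_{x'}(T(X'))$, and transport one to the other via the element of $\rSO(\Lambda_\#)_+$ witnessing $f_{2d}(x)=f_{2d'}(x')$. The one place you genuinely diverge is in justifying the minimality of $T(X)_\bQ$: the paper simply cites Zarhin's theorem that $T(X)_\bQ$ is an \emph{irreducible} $\bQ$-Hodge structure, from which minimality is immediate, whereas you give a self-contained argument, showing that for any sub-Hodge-structure $W$ containing the line, the positivity condition $\psi(v,\bar v)>0$ on $\Omega_\Lambda$ forces $W^\perp$ to avoid both $(\pm1,\mp1)$-lines and hence to be of pure type $(0,0)$, after which the Lefschetz $(1,1)$-theorem gives $W^\perp\subseteq\NS(X)_\bQ$ and so $W\supseteq T(X)_\bQ$. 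Your route is more elementary and avoids invoking Zarhin's irreducibility theorem; the paper's is shorter via citation. Both are sound, and the lattice-theoretic step you flag as the ``main obstacle'' (verifying that the marking is well defined up to exactly $\rSOtil(\Lambda_{2d})_+$ and that this maps into $\rSO(\Lambda_\#)_+$) is handled by the paper simply by choosing lifts $\tilde x\in\Omega_{2d}^+$, $\tilde x'\in\Omega_{2d'}^+$ and noting that $f_{2d}(x)=f_{2d'}(x')$ produces a $\gamma\in\rSO(\Lambda_\#)_+$ with $\tilde s'=\gamma\tilde s$, which is the same content without the marking vocabulary.
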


\begin{proof}
Choose points \( \tilde{x} \in \Omega_{2d}^+ \) and \( \tilde{x}' \in \Omega_{2d'}^+ \) which lift \( x \) and~\( x' \) respectively.
Let
\[ \tilde{s} = (r_{2d})_*(\tilde{x}), \; \tilde{s}' = (r_{2d'})_*(\tilde{x}') \; \in \Omega_\#^+. \]
The points \( \tilde{x} \), \( \tilde{x}' \), \( \tilde{s} \) and \( \tilde{s}' \) induce \( \bZ \)-Hodge structures \( H_x \), \( H_{x'} \), \( H_s \) and \( H_{s'} \) which have underlying \( \bZ \)-modules \( \Lambda_{2d} \), \( \Lambda_{2d'} \), \( \Lambda_\# \) and~\( \Lambda_\# \), respectively.

By \cite[Theorem~1.4.1]{Zar83}, \( T(X)_\bQ \) is an irreducible \( \bQ \)-Hodge structure.
By construction, \( T(X) \) is a primitive lattice in \( P^2(X, \bZ(1)) \).
Consequently, if we choose an isometry of $\bZ$-Hodge structures \( P^2(X, \bZ(1)) \cong H_x \), it will identify \( T(X) \) with the smallest primitive sub-\( \Z \)-Hodge structure of \( H_x \) whose complexification contains~\( H_x^{-1,1} \).

Via the primitive embedding \( \iota_{2d} \colon \Lambda_{2d} \to \Lambda_\# \), we can view \( H_x \) as a sub-Hodge structure of~\( H_s \).
Because \( H_x \) and \( H_s \) are Hodge structures coming from orthogonal Shimura data, their \( (-1,1) \)-parts have dimension~\( 1 \), and so \( H_x^{-1,1} = H_s^{-1,1} \).
Hence \( T(X) \) is isometric to the smallest primitive sub-\( \Z \)-Hodge structure of \( H_s \) whose complexification contains~\( H_s^{-1,1} \).

Similarly, \( T(X') \) is isometric to the smallest primitive sub-\( \Z \)-Hodge structure of \( H_{s'} \) whose complexification contains~\( H_{s'}^{-1,1} \).

Because \( f_{2d}(x) = f_{2d'}(x') \), there is a \( \gamma \in \rSO(\Lambda_\#)_+ \) such that \( \tilde{s}' = \gamma \tilde{s} \).
It induces a Hodge isometry \( H_s \to H_{s'} \).
Therefore \( T(X) \) is Hodge isometric to \( T(X') \).
\end{proof}

The following proposition is stated in \cite{bridgeland-maciocia} in the form ``a K3 surface over \( \bC \) has only finitely many Fourier--Mukai partners.''
It was shown by Orlov \cite[Theorem~3.3]{orlov:fourier-mukai} that K3 surfaces are Fourier--Mukai partners if and only if they have Hodge isometric transcendental lattices.
In fact, the proof of the proposition in \cite{bridgeland-maciocia} is entirely in terms of lattices, and therefore we do not really need the notion of Fourier--Mukai partners at all.
Since the proof is short and relies on similar techniques to the manipulations of lattices used elsewhere in this paper, we have reproduced a version of it here.

\begin{proposition}[{\cite[Proposition~5.3]{bridgeland-maciocia}}] \label{finiteness-fm}
Let \( T \) be a \( \bZ \)-Hodge structure with a quadratic form.
There are finitely many isomorphism classes of K3 surfaces \( X \) over \( \bC \) for which the transcendental lattice \( T(X) \) is Hodge isometric to \( T \).
\end{proposition}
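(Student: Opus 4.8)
The plan is to reduce the statement to a finiteness fact about the genus of an indefinite lattice, together with a uniqueness argument for the Hodge structure once the lattice is pinned down. First I would recall that if $X$ is a complex K3 surface with transcendental lattice $T(X)$, then the rank of $T(X)$ and its signature are determined: $T(X)$ has signature $(2, 20 - \rho)$ where $\rho = \rk\NS(\ov X)$, and in particular $2 \leq \rk T(X) \leq 22$. Moreover the discriminant form of $T(X)$ is, up to sign, the discriminant form of $\NS(X)$, since both are primitive orthogonal complements inside the unimodular lattice $H^2(X, \bZ(1))$ (the K3 lattice $\Lambda_{K3}$). So if we fix the Hodge isometry class of $T$, then in particular we fix the isometry class of the underlying lattice of $T$, hence its rank, signature and discriminant form.

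The key step is then: there are only finitely many isometry classes of lattices $\NS$ that can occur as $\NS(\ov X)$ for a K3 surface $X$ with $T(X)$ Hodge isometric to $T$. Indeed such an $\NS$ is an even lattice of signature $(1, \rho - 1)$ whose discriminant form is the fixed one $-q_T$; by Nikulin's theory (\cite{nikulin:lattices}), the genus of an even lattice is determined by its signature and discriminant form, and a genus of lattices contains only finitely many isometry classes. So there are finitely many candidates for $\NS(\ov X)$. For each such candidate lattice $N$, a K3 surface $X$ with $\NS(\ov X) \cong N$ and $T(X)$ Hodge isometric to the fixed $T$ determines, via the marking $H^2(X, \bZ(1)) \cong \Lambda_{K3}$, a primitive embedding $T \hookrightarrow \Lambda_{K3}$ whose orthogonal complement is isometric to $N$. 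Again by Nikulin, there are only finitely many such primitive embeddings up to the action of $\rO(\Lambda_{K3})$ (this uses that the genus of $N$ is finite and that embeddings with fixed complement are controlled by the discriminant forms, of which there are finitely many gluings). Thus the pair $(\Lambda_{K3}, \text{image of } T)$ together with the Hodge structure transported from $T$ determines the full weight-two Hodge structure on $H^2(X, \bZ)$ up to isometry, and by the surjectivity and injectivity of the period map for K3 surfaces (the Torelli theorem), $X$ is determined up to isomorphism by this Hodge-isometry class of $H^2(X, \bZ(1))$.

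Putting this together: the Hodge isometry class of $T$ determines a finite list of possibilities for $\NS(\ov X)$, each of which gives finitely many primitive embeddings $T \hookrightarrow \Lambda_{K3}$ up to $\rO(\Lambda_{K3})$, each of which (with the Hodge structure coming from $T$) yields at most one weight-two K3 Hodge structure up to isometry, hence at most one K3 surface $X$ up to isomorphism by the global Torelli theorem. Therefore only finitely many isomorphism classes of $X$ arise.

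I expect the main obstacle to be the bookkeeping in the finiteness-of-primitive-embeddings step: Nikulin's criterion (Proposition 1.15.1 of \cite{nikulin:lattices}) describes primitive embeddings into an even lattice in terms of tuples $(H_T, H_N, \gamma, \dots)$ of subgroups of the discriminant groups with a glued form, and one must check that only finitely many such data occur and that each, combined with the unimodularity of $\Lambda_{K3}$, reconstructs the complement lattice and the embedding up to isometry of $\Lambda_{K3}$. Once this is in hand, the passage to Hodge structures is immediate because the $(2,0)$-part of $H^2(X,\bC)$ is one-dimensional and lies in $T_\bC$, so the Hodge structure on $H^2(X, \bZ)$ is literally the orthogonal direct sum of the given one on $T$ and the trivial one on $\NS$; the only subtlety there is that the period map is injective only up to isomorphism (not canonically), which is exactly what we want.
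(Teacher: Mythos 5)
Your proof is correct and follows the same overall strategy as the paper: (1) finitely many possibilities for the lattice $\NS(\ov X)$ once $T$ is fixed, since its discriminant form is $-q_T$ and its rank and signature are determined; (2) finitely many ways of reassembling the full lattice $H^2(X,\bZ(1))$ from $T$ and $\NS(\ov X)$; (3) the Hodge structure on $H^2(X,\bZ(1))$ is determined by that on $T$ because $\NS$ is of type $(0,0)$; (4) the global Torelli theorem pins down $X$. The only genuine difference is in step (2): where you invoke Nikulin's classification of primitive embeddings into $\Lambda_{K3}$ (Prop.~1.15.1), which is exactly the ``bookkeeping'' you flag as the main obstacle, the paper avoids Nikulin's embedding machinery entirely by observing the elementary sandwich
\[ T(X) \oplus \NS(X) \ \subset\ H^2(X,\bZ(1)) \ \subset\ (T(X)\oplus\NS(X))^\vee, \]
where the outer containment has finite index; so once $T$ and $\NS$ are fixed up to isometry there are only finitely many intermediate subgroups to try, and the Hodge structure extends automatically to $(T\oplus\NS)^\vee_\bQ$. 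This keeps the argument closer to first principles and removes the need to match gluing data. For the finiteness of $\NS(\ov X)$, the paper cites Cassels (finitely many isometry classes of lattices of given rank and discriminant), while you cite Nikulin's genus finiteness; both serve equally well. So your proposal is sound, but you can simplify the middle step by trading Nikulin's embedding classification for the overlattice count.
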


\begin{proof}
The N\'eron--Severi group \( \NS(X) \) is the orthogonal complement of \( T(X) \) in \( H^2(X, \bZ(1)) \), which is an even unimodular lattice. Thus the discriminant groups of $T(X)$ and
$\NS(X)$ are canonically isomorphic, whereas their discriminant forms differ by a sign
\cite[Corollary~1.6.2]{nikulin:lattices}.
According to \cite[Chapter~9, Theorem~1.1]{cassels:quadratic-forms} there are only finitely many isometry classes of lattices with given rank and discriminant.
Hence there are finitely many choices for \( \NS(X) \).

We know that
\[ T(X) \oplus \NS(X)  \subset  H^2(X, \bZ(1))  \subset  (T(X) \oplus \NS(X))^\vee. \]
The index of \( T(X) \oplus \NS(X) \) in \( (T(X) \oplus \NS(X))^\vee \) is finite.
Hence for each possible isometry class of \( \NS(X) \), there are finitely many possibilities for \( H^2(X, \bZ(1)) \) as a sublattice of \( (T(X) \oplus \NS(X))^\vee \).

Because \( \NS(X) \) is purely of Hodge type \( (0, 0) \), the Hodge structure on \( T(X) \) determines the Hodge structure on \( (T(X) \oplus \NS(X))^\vee \) and hence on \( H^2(X, \bZ(1)) \).

Hence if we fix \( T(X) \) up to Hodge isometry, then there are finitely many possibilities for \( H^2(X, \bZ(1)) \) up to Hodge isometry.
Finally, by the global Torelli theorem for unpolarised K3 surfaces (\cite[Theorem~7.5.3]{huybrechts:lectures}, building on \cite{ps-shafarevich:torelli}), the Hodge isometry class of \( H^2(X, \bZ(1)) \) determines~\( X \).
\end{proof}

To complete the proof of Theorem~\ref{finiteness-cm-k3}, note that if \( X \) is a K3 surface defined over a number field~\( k \) of degree~\( n \), then \( X \) has a polarisation of some degree~\( 2d \) defined over \( k \) and therefore gives rise to a point in \( M_{2d}(k) \).
We can lift this to a point \( x \in \tilde{M}_{2d}(\Qalg) \) defined over a number field of degree~\( 2n \).
If \( X \) is a K3 surface of CM type, then \( x \) is a CM point in \( \tilde{M}_{2d} \).
It follows that \( f_{2d}(x) \) is a CM point in \( S_\# \) defined over a number field of degree~\( 2n \).
By Proposition~\ref{finiteness-cm-in-abelian-type}, there are finitely many such CM points in \( S_\# \).
Combining Lemma~\ref{transcendental-isometry} and Proposition~\ref{finiteness-fm} we see that each point in \( S_\# \) comes from only finitely many \( \Qalg \)-isomorphism classes of K3 surfaces.

\section{Brauer groups of forms and the Mumford--Tate conjecture} \label{sec:brauer}

\subsection{Mumford--Tate conjecture}

Let $X$ be a smooth, projective and geometrically integral
variety over a field $k$ that is finitely generated over $\Q$.
We choose an embedding $k\hookrightarrow\C$ and
define $H$ as the quotient of $H^2(X_\C,\Z(1))$ by its torsion subgroup. 
We write $H_\Q=H\otimes_\Z\Q$, $H_\R=H\otimes_\Z\R$, $H_\C=H\otimes_\Z\C$, 
and for a prime $\ell$ write $H_\ell=H\otimes_\Z\Z_\ell$. 

Let $\gGL(H)$ be the group $\Z$-scheme such that
for any commutative ring $R$ we have $\gGL(H)(R)=\GL(H\otimes_\Z R)$.
The generic fibre $\gGL(H)_\Q$ is the algebraic group $\gGL(H_\Q)$
over $\Q$. The {\bf Mumford--Tate group} $\gG_\bQ\subset\gGL(H_\Q)$ of the 
natural weight zero Hodge structure on $H$ 
is the smallest connected algebraic group over~$\Q$
such that $\gG_\bR$ contains the image of the homomorphism
$h:\mathbb{S}\to \gGL(H)_\bR$. It is well known that $\gG_\Q$ is reductive
so that $H_\Q$ is a semisimple $\gG_\Q$-module.
Let $\gG$ be the group $\Z$-scheme which is the Zariski closure of the Mumford--Tate
group $\gG_\bQ$ in $\gGL(H)$. 

Let $N$ be the quotient of $\NS(\ov X)$ by its torsion subgroup.
By the Lefschetz $(1,1)$-theorem one has
\begin{equation}
(H_\Q)^{\gG_\bQ}=H^{0,0}\cap H_\Q=N_\Q. \label{1,1}
\end{equation}

For a field $L$ such that $k\subset L\subset\bar k$ we write $\Ga_L$ for
the Galois group $\Gal(\bar k/L)$. 

The comparison theorems between Betti and \'etale cohomology provide
an isomorphism $H_\ell \cong H^2_{\rm \acute et}(\ov X,\Z_\ell(1))$.
Let $\rho_\ell:\Ga_k\to \gGL(H)(\Z_\ell)$ be the resulting continuous representation.
We define $G_{k,\ell}$ to be the Zariski closure of $\rho_\ell(\Ga_k)$ in 
$\gGL(H)_{\bZ_\ell}$.
By a result of Serre there exists a finite field extension $k^{\rm conn}$ of $k$
such that for every field $K\subset \bar k$ containing $k^{\rm conn}$ and every prime $\ell$
the group $G_{K,\ell,\Q_\ell}$ is connected, see \cite{LP}.

Let us recall the Mumford--Tate conjecture, together with its integral and adelic variants.
Let $\rho:\Ga_k\to \gGL(H)(\hat\Z)$ be the continuous representation of $\Ga_k$
whose $\ell$-adic component is $\rho_\ell$.

\smallskip

The {\bf Mumford--Tate conjecture} at a prime $\ell$ says that
$\gG_{\bZ_\ell}=G_{k^{\rm conn},\ell}$.
By theorems of Bogomolov \cite{Bog80}, Serre \cite{Ser81} and Henniart \cite{Hen82} this implies that
$\rho_\ell(\Ga_{k^{\rm conn}})$ is an open subgroup of $\gG(\Z_\ell)$ of finite index.

\smallskip

The {\bf integral Mumford--Tate conjecture} says that 
there is a constant $C$ such that for all primes $\ell$ the image
$\rho_\ell(\Ga_{k^{\rm conn}})$ is a subgroup of $\gG(\Z_\ell)$
of index at most~$C$.
This was conjectured by Serre to hold for any $X$, see \cite[Conjecture~C.3.7]{Ser77} and \cite[10.3]{Ser94}.

\smallskip

The {\bf adelic Mumford--Tate conjecture} says that
$\rho(\Ga_{k^{\rm conn}})$ is an open subgroup of $\gG(\hat \Z)$ and therefore (since $\gG(\hat \Z)$ is compact) has finite index.
This conjecture can only be expected to hold if the Hodge structure on $H$ is 
Hodge-maximal \cite[2.6]{CaMo}, which is the case when $X$ is a K3 surface 
\cite[Proposition~6.2]{CaMo}.

\smallskip

When $X$ is an abelian variety, the {\bf classical Mumford--Tate conjecture} for $X$ is 
stated in terms of the natural Hodge structure on the first homology group $H_1=H_1(X_\C,\Z)$.
Then $H_1\otimes_\Z\Z_\ell$ is identified with the $\ell$-adic Tate module of $X$
and so carries a natural Galois representation.
The classical Mumford--Tate conjecture has integral and adelic versions.

\smallskip

It is clear that the adelic variant of the Mumford--Tate conjecture implies
its integral variant, which implies the usual Mumford--Tate conjecture for any $\ell$.

\subsection{Brauer groups}

Grothendieck \cite{G} defines the (cohomological) Brauer group of $X$ as
$\Br(X)=H^2_{\rm \acute et}(X,\bG_m)$.
We call $\Br(\ov X)$ the {\bf geometric Brauer group} of $X$.
By \cite[Th\'eor\`eme 2.1]{CTS} the image of the natural map
$\Br(X)\to\Br(\ov X)$ is contained in $\Br(\ov X)^{\Ga_k}$
as a subgroup of finite index, which can be explicitly bounded when $k$ is a number field
\cite[Th\'eor\`emes 2.2 et 4.3]{CTS}.

Grothendieck's classical computation \cite[\S 8]{G} 
shows that $\Br(\ov X)$ 
is a torsion abelian group which is 
an extension of a finite group by a divisible group $\Br(\ov X)_{\rm div}$
isomorphic to $(\Q/\Z)^{b_2-n}$, where $b_2={\rm rk}(H)$ 
and $n={\rm rk}(\NS(\ov X))$. Let 
\[ T_\ell(\Br(\ov X))=\varprojlim \Br(\ov X)[\ell^a], \quad 
V_\ell(\Br(\ov X))=T_\ell(\Br(\ov X))\otimes_{\bZ_\ell}\bQ_\ell. \]
Then $\Br(\ov X)_{\rm div}$ is the direct sum of its 
$\ell$-primary torsion subgroups
\[ \Br(\ov X)_{\rm div}\{\ell\}=
T_\ell(\Br(\ov X))\otimes_{\Z_\ell}\Q_\ell/\Z_\ell. \]
For $n\geq 1$ the Kummer exact sequence gives rise to exact sequences
of $\Ga_k$-modules 
\[ 0\to \NS(\ov X)/\ell^n\to H^2_{\rm \acute et}(\ov X,\mu_{\ell^n})\to \Br(\ov X)[\ell^n]\to 0. \]
Let $N_\ell=N\otimes_\Z\Z_\ell$. 
Taking the projective limit in $n$ we obtain the exact sequence of $\Ga_k$-modules
\[ 0\to N_\ell\to H_\ell\to T_\ell(\Br(\ov X))\to 0. \]

\subsection{Uniform boundedness of Brauer groups}

Recall that a variety $Y$ over a field $L$ such that $k\subset L\subset\bar k$
is called a {\bf $(\bar k/L)$-form} of $X$ if $Y\times_L\bar k \cong \ov X$.

\begin{definition}
We say that the {\em Galois invariant subgroups of the
geometric Brauer groups of forms of $X$ are uniformly bounded} if
for each positive integer $n$ there exists a constant $C = C_{n,X}$ such that for every
$(\bar k/L)$-form $Y$ of $X$
defined over a field extension~$L/k$ of degree $[L:k]\leq n$ we have
\( \lvert \Br(\ov Y)^{\Ga_L} \rvert < C \).
\end{definition}

The main result of this section is the following theorem.

\begin{theorem} \label{brauer-forms-bound}
Let $X$ be a smooth, projective and geometrically integral
variety defined over a field $k$ which is finitely generated over $\Q$. 
If the integral Mumford--Tate conjecture is true for $X$, then
the Galois invariant subgroups of the geometric Brauer groups of forms of $X$ are uniformly bounded.
\end{theorem}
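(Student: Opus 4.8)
The plan is to reduce the statement to a question about lattices with Galois action, using the exact sequences from Section~\ref{sec:brauer}. Recall that for any $(\bar k/L)$-form $Y$ of $X$ there is a canonical isomorphism $H^2_{\rm \acute et}(\ov Y,\Z_\ell(1))\cong H_\ell$ of $\Z_\ell$-modules (not of Galois modules), so $\Br(\ov Y)$ is again an extension of a finite group by $(\Q/\Z)^{b_2-n}$; the finite part of $\Br(\ov Y)$ injects into $\bigoplus_\ell \Br(\ov Y)_{\rm div}\{\ell\}$ after we understand its $\ell$-torsion, so it suffices to bound $|\Br(\ov Y)[\ell^a]|$ uniformly in $\ell$ and $a$ and in $Y$, and to show that $\Br(\ov Y)[\ell]=0$ for all but finitely many $\ell$ (the exceptional set being allowed to depend on $X$ but not on $Y$ or $L$). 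From the sequence $0\to N_\ell\to H_\ell\to T_\ell(\Br(\ov X))\to 0$ applied to $Y$, and since $\Br(\ov Y)[\ell^a]=T_\ell(\Br(\ov Y))/\ell^a\oplus(\text{a piece of the finite part})$, one sees that controlling $\Br(\ov Y)^{\Ga_L}$ amounts to controlling the $\Ga_L$-invariants of $T_\ell(\Br(\ov Y))\otimes\Q_\ell/\Z_\ell$, which in turn is governed by the $\Ga_L$-coinvariants of $N_\ell$ sitting inside $H_\ell$. Concretely, taking $\Ga_L$-cohomology of $0\to N_\ell\to H_\ell\to T_\ell(\Br(\ov Y))\to 0$ and then $\otimes\Q_\ell/\Z_\ell$, the group $\Br(\ov Y)^{\Ga_L}_{\rm div}\{\ell\}$ is squeezed between a quotient of $(H_\ell/N_\ell)^{\Ga_L}\otimes\Q_\ell/\Z_\ell$ and $H^1(\Ga_L, N_\ell)$, so the whole problem becomes: bound the index in $H_\ell$ of the $\Z_\ell$-submodule generated by $N_\ell$ together with the image of $\Ga_L$-invariants, uniformly in $\ell$, $Y$, $[L:k]\le n$.

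The key input is the integral Mumford--Tate conjecture. After replacing $k$ by $k^{\rm conn}$ (a fixed finite extension, which only multiplies degrees by a bounded constant, so harmless), we know $\rho_\ell(\Ga_{k^{\rm conn}})$ is a subgroup of $\gG(\Z_\ell)$ of index $\le C_0$ for all $\ell$, where $\gG$ is the $\Z$-model of the Mumford--Tate group. For a form $Y$ defined over $L$, the Galois representation on $H^2_{\rm \acute et}(\ov Y,\Z_\ell(1))$ differs from the one on $H_\ell$ by an inner twist: there is a cocycle $c\colon\Ga_k\to\Aut(\ov X_{\bar k})$, and after choosing the identification $\ov Y\cong\ov X$ the $\Ga_L$-action on $H_\ell$ is $\gamma\mapsto c(\gamma)\cdot\rho_\ell(\gamma)$. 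The crucial point is that $c(\gamma)$ acts on $H_\ell$ through the \emph{finite} group $\Aut$ of the polarised Hodge structure (or a finite group containing it) — independent of $\ell$ — and in particular through automorphisms that preserve the Mumford--Tate group and act on $N_\ell$ via a finite group. Hence the image of $\Ga_L$ in $\gGL(H)(\Z_\ell)$, call it $\Pi_{L,\ell}$, contains a subgroup of index bounded by $C_0\cdot[L:k^{\rm conn}]\cdot|\Aut|$ which lies in $\gG(\Z_\ell)$ and acts on $N_\ell=\left(H_\ell\right)^{\gG_{\Z_\ell}}$ trivially (since $N_\Q=(H_\Q)^{\gG_\Q}$ by \eqref{1,1}, and $\gG(\Z_\ell)$ fixes $N_\ell$ by definition of the $\Z$-model, up to the same bounded index issue at finitely many $\ell$ where the scheme-theoretic closure misbehaves). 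Therefore the $\Ga_L$-action on the quotient $H_\ell/N_\ell$ factors, up to bounded index, through a fixed finite group, and on $N_\ell$ itself the action is by a finite group of bounded order; from this one extracts a uniform bound on $(H_\ell/N_\ell)^{\Ga_L}$-torsion and on $H^1(\Ga_L,N_\ell)$, and hence on $|\Br(\ov Y)^{\Ga_L}|$.

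I expect the main obstacle to be handling the \emph{finitely many bad primes} cleanly: the scheme-theoretic Zariski closure $\gG$ of $\gG_\Q$ in $\gGL(H)$ need not be flat or smooth at all primes, so the identities $N_\ell=(H_\ell)^{\gG(\Z_\ell)}$ and ``$\gG(\Z_\ell)$ acts trivially on $N_\ell$'' can fail at a finite set $\Sigma$ of primes; one must argue that for $\ell\notin\Sigma$ the argument goes through verbatim, while for $\ell\in\Sigma$ one uses a crude bound (e.g.\ $|\Br(\ov Y)[\ell^a]|\le\ell^{a(b_2-n)}$ combined with the fact that the $\ell$-adic Galois image, being compact and of bounded index in a fixed group, forces $\Br(\ov Y)^{\Ga_L}\{\ell\}$ to be annihilated by a bounded power of $\ell$ with bounded corank $0$). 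The second delicate point is disentangling the inner twist: one needs that the forms of $X$ are classified by $H^1(k,\Aut_{\ov k}(\ov X))$ and that the image of such a cocycle in $\gGL(H)(\hat\Z)$ lands in a fixed finite subgroup independent of the form and of $\ell$ — true because $\Aut$ of the Hodge structure $H$ is finite (Minkowski-type finiteness, as already invoked in the introduction for $\GL_n(\Z)$), and the action of $\Aut_{\ov k}(\ov X)$ on $H_\ell$ factors through it compatibly for all $\ell$ via the comparison isomorphisms. Once these two technical points are dispatched, the bound on $|\Br(\ov Y)^{\Ga_L}|$ follows by assembling the $\ell$-adic estimates, the corank-zero divisibility, and the finiteness of the bad set, giving a constant $C_{n,X}$ as required.
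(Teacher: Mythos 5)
Your overall strategy matches the paper's: (a) handle the twist by noting that $\Aut(\ov X)$ acts on $H$ through a finite group $A(X)\subset\GL(b_2,\Z)$, so after passing to a bounded extension the twisted and untwisted Galois modules agree; (b) use the integral Mumford--Tate conjecture to conclude that, after the reduction in (a), the Galois image at every $\ell$ is a subgroup of $\gG(\Z_\ell)$ of uniformly bounded index. This is precisely the reduction the paper carries out in its Propositions 5.2 and 5.3. However, the heart of the matter --- bounding $\lvert\Br(\ov X)^{S}\rvert$ uniformly in $\ell$ for $S\subset\gG(\Z_\ell)$ of bounded index --- is asserted rather than proved, and one of your intermediate claims is simply false.

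Concretely: you write that ``the $\Ga_L$-action on the quotient $H_\ell/N_\ell$ factors, up to bounded index, through a fixed finite group.'' It does not. The image of $\Ga_L$ is, up to bounded index, $\gG(\Z_\ell)$, which is infinite and acts faithfully on $H_\ell/N_\ell$ (indeed $(H_{\Q_\ell}/N_{\Q_\ell})^{\gG_{\Q_\ell}}=0$). What you actually need, and what the paper's Claim 1 establishes, is the far subtler statement that for $\ell$ large enough (independently of the form $Y$ and of the subgroup $S$ of bounded index) \emph{no non-zero element} of $(H/N)\otimes\F_\ell$ is fixed by $S$. This is proved by a scheme-theoretic argument over $\Spec\Z[1/P]$: constructibility of the dimension of stabilisers, the fact that the generic-fibre fixed locus is $\{0\}$, a bound on the number of connected components of fibres, and Nori's estimates on $\lvert G(\F_\ell)\rvert$ for connected algebraic groups over $\F_\ell$. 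None of this is captured by your ``crude bound'' or ``bounded index'' remarks; the index bound alone is compatible with nonvanishing invariants (a group of index $mC$ in $\gG(\F_\ell)$ can fix lines when $\ell$ is small relative to $mC$, which is exactly why one must take $\ell_0 > \varepsilon^{-1}mC$ after the constructibility argument). You also leave the fixed-$\ell$ case to a ``crude bound'' when what is actually needed is that $V_\ell(\Br(\ov X))^S=0$, which follows from Zariski density of $S$ in $\gG_{\Q_\ell}$ together with semisimplicity of the $\gG_{\Q_\ell}$-module $H_{\Q_\ell}$, plus the fact that $\gG(\Z_\ell)$ has only finitely many open subgroups of a given index (Lazard). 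In short, the reduction step is correct and mirrors the paper, but the quantitative $\ell$-adic estimates --- which are the real content of the theorem --- are missing, and the one substitute you offer for them is incorrect.
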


\begin{corollary} \label{av}
Let $A$ be an abelian variety over a field $k$ finitely generated over $\Q$ 
for which the classical Mumford--Tate conjecture holds at a prime $\ell$,
for example an abelian variety of CM type. 
Then the Galois invariant subgroups of the geometric Brauer groups of forms of $A$ are uniformly bounded.
\end{corollary}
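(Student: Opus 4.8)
First, if \( \dim A = 1 \) then \( \Br(\ov A) = 0 \) (the Brauer group of a smooth proper curve over an algebraically closed field vanishes) and there is nothing to prove; so assume \( g = \dim A \ge 2 \). By Theorem~\ref{brauer-forms-bound} it then suffices to verify that \( A \) satisfies the integral Mumford--Tate conjecture in the sense of this section, that is, for the (torsion-free) weight-zero Hodge structure on \( H = H^2(A_\C, \Z(1)) \). Since the classical Mumford--Tate conjecture is formulated in terms of \( H_1 = H_1(A_\C, \Z) \), the plan is to first pass between the \( H \)-picture and the \( H_1 \)-picture, and then to invoke the integral refinement of the Mumford--Tate conjecture for abelian varieties.

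The Hodge structure \( H \) is obtained from \( H_1 \) by the functorial construction \( V \mapsto \wedge^2 V^\vee(1) \); on underlying \( \Z \)-modules this is just \( H \cong \wedge^2 H_1^\vee \), the Tate twist only shifting weights and the Hodge filtration. Hence the Mumford--Tate group and its \( \Z \)-model \( \gG \), and likewise the \( \ell \)-adic monodromy groups \( G_{k^{\mathrm{conn}},\ell} \), attached to \( H \) are the schematic images, under the homomorphism of \( \Z \)-group schemes \( \gGL(H_1) \to \gGL(\wedge^2 H_1^\vee) \) giving the natural action on \( \wedge^2 H_1^\vee \), of the corresponding objects attached to \( H_1 \); since \( g \ge 2 \) the kernel of this homomorphism is central and finite. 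It follows that the Mumford--Tate conjecture at \( \ell \) for \( H \) is equivalent to the classical Mumford--Tate conjecture at \( \ell \) for \( A \) (using, for one implication, that \( G_{k^{\mathrm{conn}},\ell} \) is connected and contained in \( \gG \)), and that the index of \( \rho_\ell(\Ga_{k^{\mathrm{conn}}}) \) in \( \gG(\Z_\ell) \) agrees, up to a factor bounded independently of \( \ell \), with the analogous index attached to \( H_1 \).

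It therefore remains to check that the classical Mumford--Tate conjecture for \( A \) at one prime implies its integral variant. By the \( \ell \)-independence results of Larsen and Pink \cite{LP} together with the inclusion \( G_{k^{\mathrm{conn}},\ell,\Q_\ell} \subseteq \gG_{\Q_\ell} \) (a theorem of Deligne for abelian varieties), the classical conjecture at one prime implies it at every prime, the two connected reductive groups in question then having the same, \( \ell \)-independent, dimension; and by the work of Cadoret and Moonen \cite[Theorems A and B]{CaMo} this yields a single constant \( C \) bounding, for all \( \ell \), the index of \( \rho_\ell(\Ga_{k^{\mathrm{conn}}}) \) in the \( H_1 \)-model of \( \gG \). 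Combined with the previous paragraph, this gives the integral Mumford--Tate conjecture for \( H \), so Theorem~\ref{brauer-forms-bound} applies; the CM case is covered because the classical Mumford--Tate conjecture is known for abelian varieties of CM type. I expect the only genuinely deep input to be \cite{CaMo}, used as a black box; within the argument itself the one point requiring care is making the comparison of indices between the \( H \)- and \( H_1 \)-pictures uniform in \( \ell \), in particular at \( \ell = 2 \) and finitely many other small primes where the \( \Z \)-models of the groups involved degenerate.
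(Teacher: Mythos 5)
Your proof is correct and follows essentially the same route as the paper: invoke Cadoret--Moonen to upgrade the classical Mumford--Tate conjecture at one prime to its integral version for $H_1$, transfer to $H^2(\cdot,\Z(1))$ via the natural surjection $\gG_1\to\gG$, note the CM case is covered, and then apply Theorem~\ref{brauer-forms-bound}. The main differences are cosmetic: you spell out the $\wedge^2 H_1^\vee$ construction and the resulting index comparison, which the paper compresses to ``it is not hard to see,'' and you treat $g=1$ separately (which the paper's formulation in terms of the surjection $\gG_1\to\gG$ makes unnecessary, since there $\gG$ is trivial and the integral conjecture for $H$ is vacuous); also the appeal to Larsen--Pink and Deligne is subsumed in the citation of \cite[Theorem~A]{CaMo}.
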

\begin{proof} 
For an abelian variety, Cadoret and Moonen show in \cite[Theorem~A (i)]{CaMo} 
that the classical Mumford--Tate
conjecture for one prime $\ell$ implies the integral classical Mumford--Tate conjecture.
Let $\gG_{1,\Q}$ be the Mumford--Tate group of $A$ defined by the 
Hodge structure on $H_1$. There is a natural surjective homomorphism $\gG_1\to\gG$
and it is not hard to see that the integral classical Mumford--Tate conjecture implies
the integral Mumford--Tate conjecture for $H$ as stated above.

We note that the Mumford--Tate conjecture
holds for abelian varieties of CM type.
This was proved by Pohlmann \cite{P68} for abelian varieties which are isogenous to a power of an absolutely simple abelian variety of CM type.
For general abelian varieties of CM type, it follows from \cite[Theorem~1.3.1]{Vas}.

Thus we can conclude by appealing to Theorem~\ref{brauer-forms-bound}.
\end{proof}

\begin{corollary} \label{k3}
Let $X$ be a K3 surface over a field $k$ finitely generated over $\Q$.
Then the Galois invariant subgroups of the geometric Brauer groups of forms of $X$ are uniformly bounded.
\end{corollary}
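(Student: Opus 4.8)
By Theorem~\ref{brauer-forms-bound}, it suffices to verify that the integral Mumford--Tate conjecture holds for any K3 surface $X$ over a field $k$ finitely generated over~$\Q$; a K3 surface is automatically smooth, projective and geometrically integral, so no other hypothesis of that theorem needs checking. Thus the whole content of the proof is this statement about the Hodge structure $H = H^2(X_\C, \Z(1))$ and the Galois representations $\rho_\ell$ attached to it.

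First I would recall that the Mumford--Tate conjecture at every prime $\ell$ is known for K3 surfaces, by the work of Tankeev and André on the Hodge structure on $H^2$; after enlarging $k$ to $k^{\rm conn}$ this gives $\gG_{\Z_\ell} = G_{k^{\rm conn},\ell}$ for all~$\ell$. Next, by \cite[Proposition~6.2]{CaMo} the Hodge structure $H$ of a K3 surface is Hodge-maximal, so the adelic Mumford--Tate conjecture is not obstructed, and by \cite[Theorem~B]{CaMo} it does in fact hold: $\rho(\Ga_{k^{\rm conn}})$ is an open subgroup of the compact group $\gG(\hat\Z)$, hence of some finite index~$C$.

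Finally, as observed in the discussion of the variants above, the adelic Mumford--Tate conjecture implies the integral one: composing $\rho$ with the projection to the $\ell$-adic factor shows that $\rho_\ell(\Ga_{k^{\rm conn}})$ has index at most~$C$ in $\gG(\Z_\ell)$ for every prime~$\ell$, with $C$ independent of~$\ell$. This is exactly the integral Mumford--Tate conjecture for $X$, so Theorem~\ref{brauer-forms-bound} applies and the corollary follows. There is no obstacle internal to this argument: all the depth sits in the cited inputs, namely the geometric Mumford--Tate conjecture for K3 surfaces (Tankeev, André) and its adelic refinement (Cadoret--Moonen).
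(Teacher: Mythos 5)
Your proof is correct and follows essentially the same route as the paper: both reduce the corollary to verifying the integral Mumford--Tate conjecture for K3 surfaces, and both obtain this from the adelic Mumford--Tate conjecture for K3 surfaces due to Cadoret and Moonen (which in turn rests on the Mumford--Tate conjecture proved by Tankeev and Andr\'e), then apply Theorem~\ref{brauer-forms-bound}. The only cosmetic difference is that the paper cites \cite[Theorem~6.6]{CaMo} directly for the K3 case, while you assemble the same conclusion from \cite[Proposition~6.2]{CaMo} and \cite[Theorem~B]{CaMo}.
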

\begin{proof}
The adelic Mumford--Tate conjecture is true for K3 surfaces by
\cite[Theorem 6.6]{CaMo}. (This result builds on the proof of the 
Mumford--Tate conjecture for K3 surfaces by Tankeev~\cite{tankeev:mt-k3} 
and Andr\'e~\cite{andre:kuga-satake},
as well as on previous work of Cadoret and Kret~\cite{cadoret-kret:galois-generic}.)
Thus the integral Mumford--Tate conjecture holds for $X$.
\end{proof}

In particular, for a K3 surface $X$ over $k$, Corollary~\ref{k3} allows one to recover
the finiteness of $\Br(\ov{X})^{\Ga_k}$ established earlier in \cite{SZ1}.

\subsection{Proof of Theorem~\ref{brauer-forms-bound}}

The proof of Theorem~\ref{brauer-forms-bound} has two steps.
In Proposition~\ref{5.2} we show that it is possible to find 
a finite extension~$k'$ of $L$ such that $\Br(\ov X) \cong \Br(\ov Y)$ as $\Ga_{k'}$-modules and such that the degree $m = [k':L]$ is bounded in terms of $X$. For this we can assume
without loss of generality that $k=L$.
Then we have
$ \Br(\ov Y)^{\Ga_L}  \subset  \Br(\ov Y)^{\Ga'}  =  \Br(\ov X)^{\Ga'}$,
where $\Ga' = \Ga_{k'}$.
Hence it suffices to bound the size of $\Br(\ov X)^{\Ga'}$ in terms of~$m$.
This is what we do in Proposition~\ref{5.3}, which is the second step of the proof. 

\begin{proposition} \label{5.2}
Let $X$ be a smooth, proper and geometrically integral variety
defined over a field~$k$ of characteristic zero. 
Then there exists a constant \( m = m_{X} \) such that for every $(\bar k/k)$-form
$Y$ of $X$ we have an isomorphism $\Br(\ov Y)_{\rm div} \cong \Br(\ov X)_{\rm div}$ 
of $\Ga_{k'}$-modules, where $k'$ is a field extension of $k$ of degree $[k':k] \leq m$.
\end{proposition}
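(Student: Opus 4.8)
The plan is to reduce the statement to a question about the finitely many twisted forms of the $\Gamma_k$-module $\Br(\ov X)_{\rm div}$, or rather of the associated Tate modules. First I would recall from Grothendieck's computation that $\Br(\ov X)_{\rm div}$ is determined, as a $\Gamma_k$-module, by the collection of Tate modules $T_\ell(\Br(\ov X))$, and that from the exact sequences
\[ 0\to N_\ell\to H_\ell\to T_\ell(\Br(\ov X))\to 0 \]
recalled above, $T_\ell(\Br(\ov X))$ is a subquotient of $H_\ell$. The key point is that a $(\bar k/k)$-form $Y$ of $X$ gives $\ov Y\cong\ov X$, hence canonically isomorphic geometric cohomology groups $H^2_{\rm \acute et}(\ov Y,\Z_\ell(1))\cong H^2_{\rm \acute et}(\ov X,\Z_\ell(1))$; what changes is the Galois action, which is twisted by a cocycle. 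So $Y$ determines a continuous homomorphism $\Gamma_k\to\Aut(\ov X)$ composed with the action on cohomology, and the $\Gamma_k$-module $H^2_{\rm \acute et}(\ov Y,\Z_\ell(1))$ is the twist of $H^2_{\rm \acute et}(\ov X,\Z_\ell(1))$ by this.

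The crucial input is that $\Aut(\ov X)$ acts on $\prod_\ell H_\ell$ (equivalently on $H$, up to torsion) through a \emph{finite} group. This is where one uses that $X$ is a smooth proper variety over a field of characteristic zero: the image of $\Aut(\ov X)$ in $\GL(H)\cong\GL_{b_2}(\Z)$ is a group of integral matrices preserving the cup-product form and the (finitely many) Hodge-theoretic/algebraic structures, and by Minkowski's theorem any finite subgroup of $\GL_{b_2}(\Z)$ — more to the point, the relevant image group is finite (for instance because it is a subgroup of the automorphisms of the polarised integral Hodge structure, which is finite, or by the theorem that the image of $\Aut(\ov X)$ in $\GL(H^*(\ov X,\Z))$ is finite for $X$ of general type, and a direct argument otherwise; in our applications $X$ is an abelian variety or a K3 surface where this is classical). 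Let $\Phi$ be this finite group and let $k_1/k$ be the finite Galois extension cut out by the homomorphism $\Gamma_k\to\Aut(\ov X)\to\Phi$ attached to $Y$; then over $k_1$ the form $Y$ becomes "untwisted" at the level of $H$, so $H^2_{\rm \acute et}(\ov Y,\Z_\ell(1))\cong H^2_{\rm \acute et}(\ov X,\Z_\ell(1))$ as $\Gamma_{k_1}$-modules for every $\ell$ compatibly, hence $\Br(\ov Y)_{\rm div}\cong\Br(\ov X)_{\rm div}$ as $\Gamma_{k_1}$-modules. The degree $[k_1:k]$ divides $\abs{\Phi}$, which depends only on $X$, so one may take $m=\abs{\Phi}$ — but this is not yet quite the statement, since a priori $k_1$ depends on $Y$ through the choice of cocycle, not just through its degree.

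To fix the dependence on $Y$, I would argue as follows: the twisting cocycle is a class in $H^1(k,\Phi)$ (non-abelian cohomology with the finite group $\Phi$), and what we need is that $Y$ becomes trivial, i.e. that $H^2_{\rm \acute et}(\ov Y,\Z_\ell(1))\cong H^2_{\rm \acute et}(\ov X,\Z_\ell(1))$ over an extension of bounded degree. Concretely, a continuous cocycle $c\colon\Gamma_k\to\Phi$ has open kernel, and the fixed field $k_c$ of $\ker(c)$ is a finite extension with $\Gal(k_c/k)$ embedding into $\Phi$, so $[k_c:k]\le\abs{\Phi}$; over $k_c$ the cocycle becomes trivial. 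Thus for \emph{every} form $Y$ the required extension $k'=k_c$ has degree at most $m=\abs{\Phi}$, uniformly. Setting $m_X=\abs{\Phi}$ (or, if one prefers an explicit bound, $m_X = \abs{\GL_{b_2}(\F_3)}$ via Minkowski, since any finite subgroup of $\GL_{b_2}(\Z)$ injects into $\GL_{b_2}(\F_3)$) completes the proof.

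The main obstacle I expect is justifying that $\Aut(\ov X)$ acts through a finite group on $H$ — or, dually, pinning down exactly which structure the twisting respects so that "untwisting over $k_c$" genuinely yields an isomorphism of $\Gamma_{k_c}$-modules $\Br(\ov Y)_{\rm div}\cong\Br(\ov X)_{\rm div}$ and not merely an abstract isomorphism of groups. One must be careful that the identification $\ov Y\cong\ov X$ is chosen compatibly across all primes $\ell$ (it is, since it comes from a single $\bar k$-isomorphism of varieties, which induces compatible isomorphisms on all of $H^*_{\rm \acute et}(-,\Z_\ell)$ and on Betti cohomology), and that the descent of the module structure from $\bar k$ to $k_c$ is the honest Galois-equivariant one. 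Once finiteness of the automorphism action is in hand, the rest is the elementary observation that a continuous cocycle into a finite group trivializes over an extension of degree bounded by the order of that group.
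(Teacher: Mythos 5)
Your plan follows the same route as the paper (twist $H^2_{\text{\'et}}$ by the cocycle attached to $Y$, trivialise over a bounded extension, conclude the Tate modules — hence $\Br(\ov Y)_{\rm div}$ — untwist), but there is a genuine gap in the step you yourself flagged as the main obstacle. You assert that the image $\Phi$ of $\Aut(\ov X)$ in $\GL(H)$ is a \emph{finite} group and take $m=\abs{\Phi}$. This is false in general, including in the cases the paper cares about: a K3 surface can have infinite automorphism group (for example many K3 surfaces of Picard rank $\geq 2$), and since $\Aut(\ov X)\to\GL(H^2)$ is injective for K3 surfaces by Torelli, the image $\Phi$ is then infinite. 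Similarly for abelian varieties such as $E\times E$, whose automorphism group contains $\GL_2(\Z)$ acting non-trivially on $H^2$. So the extension $k_1$ you build by "the finite Galois extension cut out by $\Gamma_k\to\Phi$" need not exist, and the bound $m=\abs{\Phi}$ is not available.

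The correct observation — which is what the paper actually uses — is that $\Phi$ (called $A(X)$ in the paper) only needs to be \emph{discrete}, not finite. Since $\Gamma_k$ is compact and the cocycle $\tilde c\colon\Gamma_k\to\Phi$ is continuous into a discrete group, the \emph{image} $G=\tilde c(\Gamma_k)$ is automatically finite, and the stabiliser/kernel of $\tilde c$ is open. (To make $\tilde c$ a genuine homomorphism one first passes to a finite extension of $k$ over which $\Gamma_k$ acts trivially on $\Phi$; this extension has degree bounded in terms of $X$ alone, because $\Gamma_k\to\Aut(\Phi)$ has finite image by the same compact-to-discrete argument.) The uniform bound on $[k':k]$ then comes from Minkowski applied to the \emph{finite} subgroup $G\subset\GL_{b_2}(\Z)$, not from any finiteness of $\Phi$ itself. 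You do mention Minkowski as an "alternative" at the end, but it is in fact the indispensable ingredient: the bound $m_X$ must be Minkowski's bound on finite subgroups of $\GL_{b_2}(\Z)$, applied to the image of the cocycle, which is finite for topological reasons even when $\Phi$ is not.
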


\begin{proof} 
The varieties $X$ and $Y$ are defined over a subfield $k_0\subset k$
which is finitely generated over $\Q$. 
Furthermore, $k_0$ can be chosen so that the isomorphism $\ov Y \cong \ov X$
is defined over a finite extension of $k_0$. Thus without loss of generality
we can assume that $k$ is finitely generated over $\Q$. 
Let us choose an embedding $\bar k\hookrightarrow\C$.
This allows us to define $H$ as the quotient of $H^2(X_\C,\Z(1))$ by its torsion subgroup.
The automorphism group $\Aut(\ov X)$ is a subgroup of $\Aut(X_\C)$, so we have a natural action
of $\Aut(\ov X)$ on $H$. 

The group $\Aut(\ov X)$ has a natural structure of a $k$-scheme
which is locally of finite type. The action of $\Ga_k$ on $\Aut(\ov X)$ 
is continuous when $\Aut(\ov X)$ is given discrete topology, which means that
the stabilisers of the elements of $\Aut(\ov X)$ are open subgroups
of the profinite group $\Ga_k$. 
By the comparison theorems between Betti and $\ell$-adic \'etale cohomology,
the action of $\Aut(\ov X)$ on $H$ is compatible with the natural action of
$\Aut(\ov X)$ on $H_\ell$ identified with the quotient of $H^2_{\rm \acute et}(\ov X,\Z_\ell(1))$
by its torsion subgroup. Thus the images of both actions are canonically isomorphic.

We define $A(X)$ as the image of the natural action of $\Aut(\ov X)$ on $H$.
Since $A(X)$ is identified with the image of the action of
$\Aut(\ov X)$ on $H_\ell$, the discrete group $A(X)$ inherits a continuous action of
$\Ga_k$ so that the natural surjective homomorphism $\Aut(\ov X)\to A(X)$ is
$\Ga_k$-equivariant. The action of $\Ga_k$ on $A(X)$ 
is a continuous homomorphism $\Ga_k\to\Aut(A(X))$,
where the group of automorphisms $\Aut(A(X))$ of the group $A(X)$ is given
discrete topology. Since $\Ga_k$ is compact and $\Aut(A(X))$ is discrete,
this homomorphism has finite image. Replacing $k$ by a finite field extension,
we can assume that $\Ga_k$ acts trivially on $A(X)$.

Since $Y$ is a $(\bar k/k)$-form of $X$, by Galois descent $Y$
can be obtained by twisting $X$ with a continuous 1-cocycle 
$c:\Ga_k\to\Aut(\ov X)$. Composing $c$ with $\Aut(\ov X)\to A(X)$
we obtain a cocycle $\tilde c:\Ga_k\to A(X)$.
The action of $\Aut(\ov X)$ on $T_\ell(\Br(\ov X))=H_\ell/N_\ell$ factors through $A(X)$.
Hence the $\Ga_k$-module $T_\ell(\Br(\ov Y))$
is the twist of the $\Ga_k$-module $T_\ell(\Br(\ov X))$ by $\tilde c$ composed with
the natural homomorphism $A(X)\to \GL(T_\ell(\Br(\ov X)))$.

We have arranged that the action of $\Ga_k$ on $A(X)$ is trivial, therefore
the cocycle $\tilde c$ is a homomorphism $\Ga_k\to A(X)$.
Since $\Ga_k$ is compact and $A(X)$ is discrete, the image $G=\tilde c(\Ga_k)$ is finite.
Let $k'\subset \bar k$ be the invariant field of the kernel of $\tilde c$.
The group $G$ is a finite subgroup of $\GL(b_2,\Z)$,
where $b_2$ is the rank of $H$.  Minkowski's lemma \cite{minkowski:finite-subgroups} says that 
there exists a constant $m$ depending only on $b_2$ such that $|G|=[k':k]\leq m$. 
The cocycle $\tilde c$ is trivialised by
the base change from $k$ to $k'$, hence 
$\Br(\ov X)_{\rm div}$ and $\Br(\ov Y)_{\rm div}$ are isomorphic
$\Ga_{k'}$-modules. 
\end{proof}

\begin{proposition} \label{5.3}
Let $X$ be a smooth, projective and geometrically integral
variety defined over a field $k$ which is finitely generated over $\Q$. 
Assume that the integral Mumford--Tate conjecture is true for $X$.
Then for each positive integer $m$ there exists a constant $C' = C'_{m,X}$ such that for every 
subgroup $\Ga'\subset\Ga_k$ of index $m$ we have $\lvert\Br(\bar X)^{\Ga'}\rvert < C'$.
\end{proposition}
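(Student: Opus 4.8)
The plan is to reduce the desired bound on $\abs{\Br(\bar X)^{\Gamma'}}$ to a uniform vanishing, for almost all primes $\ell$, of the $\Gamma'$-invariants in the mod-$\ell$ reduction of the ``transcendental'' quotient $H/N$, and to obtain that vanishing from the integral Mumford--Tate conjecture. First I would pass to the divisible part: by Grothendieck's computation $\Br(\bar X)$ is an extension of a finite group of order depending only on $X$ by the characteristic subgroup $\Br(\bar X)_{\rm div}$, so it is enough to bound $\abs{\Br(\bar X)_{\rm div}^{\Gamma'}}=\prod_\ell\abs{\Br(\bar X)_{\rm div}\{\ell\}^{\Gamma'}}$. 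From the exact sequence $0\to N_\ell\to H_\ell\to T_\ell(\Br(\bar X))\to 0$, whose last term is $\Z_\ell$-free, the quotient $Q:=H/N$ is a free $\Z$-module, $T_\ell(\Br(\bar X))=Q\otimes_\Z\Z_\ell$, $\Br(\bar X)_{\rm div}\{\ell\}=Q\otimes_\Z\Q_\ell/\Z_\ell$, and $\Br(\bar X)_{\rm div}\{\ell\}[\ell]\cong Q\otimes_\Z\F_\ell$ as $\Gamma_k$-modules. By the Lefschetz $(1,1)$-theorem \eqref{1,1}, $N_\bQ=H_\bQ^{\gG_\bQ}$, so $\gG$ stabilises $N$ and acts on $Q$; since $H_\bQ$ is a semisimple $\gG_\bQ$-module, $Q\otimes_\Z\bQ$ has no non-zero $\gG_\bQ$-invariants.

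Second I would invoke the conjecture to settle each prime individually. Set $\Gamma''=\Gamma'\cap\Gamma_{k^{\rm conn}}$, which has index at most $m\,[k^{\rm conn}:k]$ in $\Gamma_{k^{\rm conn}}$. The integral Mumford--Tate conjecture supplies a constant $C$, independent of $\ell$, with $[\gG(\Z_\ell):\rho_\ell(\Gamma_{k^{\rm conn}})]\leq C$, hence $[\gG(\Z_\ell):\rho_\ell(\Gamma'')]\leq D:=Cm\,[k^{\rm conn}:k]$ for every $\ell$, uniformly in $\Gamma'$. As $\rho_\ell(\Gamma'')$ has finite index in the Zariski-dense subgroup $\rho_\ell(\Gamma_{k^{\rm conn}})$ of the connected group $\gG_{\Q_\ell}$, it is itself Zariski dense, so $(Q\otimes\Q_\ell)^{\Gamma'}\subseteq(Q\otimes\Q_\ell)^{\Gamma''}=(Q\otimes\Q_\ell)^{\gG_{\Q_\ell}}=0$. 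Consequently the Tate module of $\Br(\bar X)_{\rm div}\{\ell\}^{\Gamma'}$ is contained in $(Q\otimes\Z_\ell)^{\Gamma'}=0$, so $\Br(\bar X)_{\rm div}\{\ell\}^{\Gamma'}$ is finite for every $\ell$. Moreover, for a fixed $\ell$ this already gives a bound depending only on $\ell$, $X$ and $m$: the compact $\ell$-adic group $\gG(\Z_\ell)$ has only finitely many open subgroups of index $\leq D$, and $\Br(\bar X)_{\rm div}\{\ell\}^{\Gamma'}\subseteq\Br(\bar X)_{\rm div}\{\ell\}^{\Gamma''}=(Q\otimes\Q_\ell/\Z_\ell)^{\rho_\ell(\Gamma'')}$ depends only on the subgroup $\rho_\ell(\Gamma'')$.

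The remaining, and I expect hardest, step is to produce a finite set $S=S(X,m)$ of primes outside which $\Br(\bar X)_{\rm div}\{\ell\}^{\Gamma'}=0$; the product over $\ell\in S$ of the bounds above, times the order of the finite part of $\Br(\bar X)$, then yields $C'_{m,X}$. Being a finite $\ell$-group, $\Br(\bar X)_{\rm div}\{\ell\}^{\Gamma'}$ is trivial as soon as its $\ell$-torsion $(Q\otimes\F_\ell)^{\Gamma'}$ is. For all but finitely many $\ell$ — in particular those with $\ell>D$ and $\gG_{\Z_\ell}$ smooth — the first congruence kernel $\ker(\gG(\Z_\ell)\to\gG(\F_\ell))$ is pro-$\ell$; having no proper subgroup of index $<\ell$, it lies in $\rho_\ell(\Gamma'')$, so $(Q\otimes\F_\ell)^{\Gamma''}=(Q\otimes\F_\ell)^{\Delta}$, where $\Delta\subseteq\gG(\F_\ell)$ is the image of $\rho_\ell(\Gamma'')$, of index $\leq D$. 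It thus suffices to show that for all $\ell$ outside a finite set depending only on $\gG$, $Q$ and $D$, every subgroup $\Delta\subseteq\gG(\F_\ell)$ of index $\leq D$ satisfies $(Q\otimes\overline{\F}_\ell)^{\Delta}=0$. I would prove this by combining three ingredients: that for large $\ell$ the group $\gG(\F_\ell)$ is Zariski dense in the connected reductive group $\gG_{\F_\ell}$; that a finite simple group of Lie type over $\F_\ell$ has no subgroup of index $\leq D$ once $\ell$ is large, so that $\Delta$ contains the subgroup $\gG(\F_\ell)^{+}$ generated by its $\ell$-elements, which already kills the $\gG_\bQ$-irreducible constituents of $Q\otimes\bQ$ on which the derived group $\gG^{\rm der}$ acts non-trivially (their reductions mod $\ell$ carry no trivial composition factor once $\ell$ exceeds a bound depending on their highest weights); and that the remaining constituents are non-zero characters of the central torus $\gG/\gG^{\rm der}$, on which $\Delta$ still acts without invariants because its image in $(\gG/\gG^{\rm der})(\F_\ell)$ has index $\leq D$ and such characters remain non-trivial on bounded-index subgroups for $\ell$ large. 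The fact that $\gG$ preserves the cup-product form on $Q$, so that its image lies in an orthogonal group, can be used to organise this last case analysis.
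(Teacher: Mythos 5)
Your overall architecture matches the paper's: reduce to the divisible part of the Brauer group, show for each fixed $\ell$ that $\Br(\ov X)_{\rm div}\{\ell\}^{\Gamma'}$ is finite with a bound depending only on $\ell$ and $m$ (using Zariski density, semisimplicity of $H_{\Q_\ell}$ and the Lefschetz $(1,1)$-theorem, plus finiteness of bounded-index open subgroups of $\gG(\Z_\ell)$), and then find a finite set of primes outside which the invariants vanish. Your ``fixed $\ell$'' step is essentially identical to the paper's Claim~2. The genuine divergence is in the large-$\ell$ vanishing statement.

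For that step the paper works directly with arbitrary subgroups $S\subset\gG(\F_\ell)$ of bounded index and never needs the structure theory of finite groups of Lie type: it considers the action morphism $\sigma\colon\gG\times\A^r_\Z\to\A^r_\Z$ on $Q=H/N$, forms the ``universal stabiliser'' $W\subset\gG\times\A^r_\Z$, and uses EGA~IV semicontinuity of fibre dimension to show that for $\ell$ large the stabiliser of any nonzero $M\in\A^r(\F_\ell)$ has dimension $\leq\dim\gG-1$. Combined with the constructibility of the number of geometric components and Nori's bound $(\ell-1)^{\dim G}\leq|G(\F_\ell)|\leq(\ell+1)^{\dim G}$, this yields a lower bound of the form $\varepsilon\ell$ on the index of that stabiliser, which beats $mC$ once $\ell>\varepsilon^{-1}mC$. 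This is quantitative and self-contained.

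Your route instead passes through $\gG(\F_\ell)^+$ and decomposes $Q_\Q$ into $\gG_\Q$-irreducible constituents. This can be made to work, but the key step — that for constituents on which $\gG^{\rm der}$ acts non-trivially, the reductions mod $\ell$ of a $\gG$-stable lattice carry no trivial $\gG^{\rm der}$-composition factor for $\ell$ large — is left at the level of a citation to Jantzen-type results ``depending on highest weights,'' and this requires real care: you must compare your lattice with a Weyl lattice (they agree mod $\ell$ only for $\ell$ prime to a bounded index), argue that $\gG(\F_\ell)^+$ is Zariski dense in $\gG^{\rm der}_{\ov\F_\ell}$, and handle the fact that $\gG^{\rm der}$ need not be simply connected. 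The surrounding structure-theoretic steps (that a bounded-index subgroup of $\gG(\F_\ell)$ contains $\gG(\F_\ell)^+$ once $\ell$ is large; surjectivity of $\gG(\Z_\ell)\to\gG(\F_\ell)$; the index of the image of $\gG(\F_\ell)$ in $(\gG/\gG^{\rm der})(\F_\ell)$) are each correct in spirit but also need to be filled in. If you develop step (d) honestly, the cleanest proof of ``no trivial mod-$\ell$ constituent for $\ell$ large'' is itself a semicontinuity-of-fixed-points argument very close to the paper's, so the representation-theoretic detour does not really buy you anything; the paper's geometric argument is the more economical path.
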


In order to prove Proposition~\ref{5.3}, it is clear that we can replace $k$ by a finite field extension,
so we assume that $k=k^{\rm conn}$.
Since $\Br(\ov X)$ is an extension of a finite abelian group by $\Br(\ov X)_{\rm div}$,
the proposition follows from the following two claims:

\begin{claim}
There is a positive $\ell_0=\ell_0(X,m)$ such that 
$\Br(\ov X)_{\rm div}[\ell]^{\Ga'}=0$ for $\ell\geq \ell_0$.
\end{claim}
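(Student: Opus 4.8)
The plan is to identify \(\Br(\ov X)_{\mathrm{div}}[\ell]\) with an explicit mod‑\(\ell\) \(\Ga_k\)-module, use the integral Mumford--Tate conjecture to force the \(\Ga'\)-action through a bounded‑index subgroup of \(\gG(\F_\ell)\), and then analyse the semisimple and central‑torus directions of \(\gG\) separately. As in the rest of the proof of Proposition~\ref{5.3} we may assume \(k=k^{\mathrm{conn}}\); the integral Mumford--Tate conjecture then gives a constant \(C\) with \([\gG(\Z_\ell):\rho_\ell(\Ga_k)]\le C\) for all \(\ell\). From the exact sequence \(0\to N_\ell\to H_\ell\to T_\ell(\Br(\ov X))\to0\) recalled above and the freeness of \(T_\ell(\Br(\ov X))\), the submodule \(N_\ell\) is saturated in \(H_\ell\), so reduction modulo \(\ell\) gives an isomorphism of \(\Ga_k\)-modules
\[ \Br(\ov X)_{\mathrm{div}}[\ell]\ \cong\ T_\ell(\Br(\ov X))/\ell\ \cong\ (H\otimes_\Z\F_\ell)\big/(N\otimes_\Z\F_\ell)=:M_\ell . \]
It therefore suffices to find \(\ell_0\) such that \(M_\ell^{\Ga'}=0\) for \(\ell\ge\ell_0\). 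Using that \(\gG_\Q\) is reductive together with~\eqref{1,1}, I would write \(H_\Q=N_\Q\oplus W_\Q\) as \(\gG_\Q\)-modules with \((W_\Q)^{\gG_\Q}=0\), and spread this out over \(\Z[1/S]\), enlarging the finite set \(S\) so that \(\gG_{\Z[1/S]}\) is smooth with connected reductive fibres, \(\gG(\Z_\ell)\twoheadrightarrow\gG(\F_\ell)\) for \(\ell\notin S\), and \(H_{\Z[1/S]}=N_{\Z[1/S]}\oplus W_{\Z[1/S]}\) is a \(\gG\)-stable decomposition. For \(\ell\notin S\) this gives \(M_\ell\cong W\otimes\F_\ell\) as \(\gG(\F_\ell)\)-modules, and the \(\Ga'\)-action factors through the image \(\Delta\subseteq\gG(\F_\ell)\) of \(\rho_\ell(\Ga')\), which has index at most \(B:=Cm\). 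Thus everything reduces to the assertion: for \(\ell\) large, every subgroup \(\Delta\subseteq\gG(\F_\ell)\) of index \(\le B\) satisfies \((W\otimes\F_\ell)^{\Delta}=0\).

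To prove this I would split off the derived subgroup \(\gG^{\mathrm{der}}\) (a semisimple group scheme over \(\Z[1/S]\) after enlarging \(S\)) from the quotient torus \(\gG^{\mathrm{ab}}=\gG/\gG^{\mathrm{der}}\). Let \(\gG(\F_\ell)^{+}\subseteq\gG^{\mathrm{der}}(\F_\ell)\) be the subgroup generated by the \(\F_\ell\)-points of the unipotent radicals of the \(\F_\ell\)-parabolic subgroups. For \(\ell\) large this group is perfect and, modulo its centre, a product of finite simple groups of Lie type of bounded rank, hence has no proper subgroup of index \(\le B\); being perfect, it is then contained in every \(\Delta\) of index \(\le B\). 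Moreover, for \(\ell\) larger than a bound depending only on \(W\), one has \((W\otimes\F_\ell)^{\gG(\F_\ell)^{+}}=(W\otimes\F_\ell)^{\gG^{\mathrm{der}}_{\F_\ell}}=(W_\Q)^{\gG^{\mathrm{der}}_\Q}\otimes\F_\ell\): the Chevalley-type group \(\gG(\F_\ell)^{+}\) has the same invariants as the algebraic group \(\gG^{\mathrm{der}}_{\F_\ell}\) once \(\ell\) exceeds the weights occurring in \(W\), and the invariants of a reductive group in a fixed representation do not jump for \(\ell\) large. Writing \(W':=(W_\Q)^{\gG^{\mathrm{der}}_\Q}\), the \(\gG_\Q\)-action on \(W'\) factors through the torus \(\gG^{\mathrm{ab}}_\Q\), and \((W')^{\gG^{\mathrm{ab}}_\Q}=(W_\Q)^{\gG_\Q}=0\), so no weight of \(W'\) for \(\gG^{\mathrm{ab}}_\Q\) is trivial. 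Finally, the image of \(\Delta\) in the finite abelian group \(\gG^{\mathrm{ab}}(\F_\ell)\) has index \(\le B\), hence contains every \(B!\)-th power, while for \(\ell\) large every nontrivial weight of \(W'\) induces a character of \(\gG^{\mathrm{ab}}(\F_\ell)\) whose order does not divide \(B!\) (the order of such a character tends to infinity with \(\ell\)). Therefore no nonzero weight space of \(W'\otimes\F_\ell\) is fixed by \(\Delta\), so \((W\otimes\F_\ell)^{\Delta}\subseteq(W'\otimes\F_\ell)^{\Delta}=0\), which completes the argument.

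I expect the main obstacle to be this last step, and within it the central‑torus direction rather than the semisimple one. Whereas \(\gG(\F_\ell)^{+}\) has no proper subgroup of small index for \(\ell\) large, the group \(\gG(\F_\ell)\) itself always has a subgroup of index \(2\) as soon as \(\gG^{\mathrm{ab}}\neq1\), so one cannot simply enlarge \(\Delta\) to all of \(\gG(\F_\ell)\); the way out is that the weights of \(\gG^{\mathrm{ab}}\) occurring in \(W\) are fixed and nontrivial, while the orders of their reductions modulo \(\ell\) grow without bound. This is precisely the case relevant to CM varieties, where \(\gG_\Q\) is a torus and \(W_\Q\) is the transcendental part of \(H_\Q\); the facts invoked about finite groups of Lie type (absence of small‑index subgroups, and agreement of invariants of \(\gG(\F_\ell)^{+}\) with those of \(\gG^{\mathrm{der}}_{\F_\ell}\) for \(\ell\) large) are standard but are the technical heart of the proof.
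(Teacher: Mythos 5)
Your proposal is correct, and it gives a genuinely different proof from the paper's. The paper argues algebro-geometrically and in a uniform way: it considers the morphism of $\Z$-schemes $(\sigma,p_2)\colon \gG\times\A^r_\Z\to\A^r_\Z\times\A^r_\Z$, uses semicontinuity of fibre dimension and constructibility of the number of geometric components (both from EGA~IV) to show that for $\ell\gg 0$ the stabiliser of every nonzero $\F_\ell$-point of $H/N$ has dimension at most $\dim\gG-1$ and a bounded number of components, and then applies Nori's estimate $(\ell-1)^{\dim G}\le|G(\F_\ell)|\le(\ell+1)^{\dim G}$ to conclude that any such stabiliser has index $>\varepsilon\ell$ in $\gG(\F_\ell)$, hence cannot contain a subgroup of index $\le mC$. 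Your proof instead exploits the reductive structure of $\gG$ directly: you split $W=H_\Q/N_\Q$ into the part on which $\gG^{\mathrm{der}}$ acts and the part $W'=(W_\Q)^{\gG^{\mathrm{der}}_\Q}$ on which $\gG$ acts through its torus quotient, and handle the two pieces with different tools (perfectness and absence of small-index subgroups in $\gG(\F_\ell)^{+}$ for the semisimple piece; growth of the orders of reductions of fixed nontrivial characters for the torus piece). The two routes reach the same dichotomy between "semisimple" and "central" directions, but the paper avoids making the decomposition explicit by working with stabiliser schemes of arbitrary points, whereas you make it explicit and then reduce to standard structure theory of finite groups of Lie type. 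What your approach buys is a more conceptual picture that makes plain why the central torus is the delicate direction (as you note, the only one present in the CM case); what the paper's approach buys is uniformity — it never needs the identification of $\gG(\F_\ell)^{+}$-invariants with $\gG^{\mathrm{der}}_{\F_\ell}$-invariants for $\ell$ large, which, while standard (it follows, e.g., from the fact that a vector fixed by $U(\F_\ell)$ for a unipotent $U\cong\G_a^k$ is fixed by $U$ once $\ell$ exceeds the degree of the action, plus generation of $\gG^{\mathrm{der}}$ by root groups), is precisely the kind of finite-group-of-Lie-type input the algebro-geometric argument circumvents. One small remark on your torus step: the surjectivity of $\gG(\F_\ell)\to\gG^{\mathrm{ab}}(\F_\ell)$, which you implicitly use to say the image of $\Delta$ has index $\le B$, follows from Lang's theorem applied to the connected group $\gG^{\mathrm{der}}_{\F_\ell}$; it is worth making this explicit.
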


\begin{claim}
For each $\ell$ there is a positive integer $a=a(X,m,\ell)$ 
such that we have $\ell^a\Br(\ov X)_{\rm div}\{\ell\}^{\Ga'}=0$.
\end{claim}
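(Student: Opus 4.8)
The plan is to translate the claim into a statement about Tate modules and deduce it from the Mumford--Tate conjecture. Write $T = T_\ell(\Br(\ov X))$ and $M = \Br(\ov X)_{\rm div}\{\ell\} = T\otimes_{\Z_\ell}\Q_\ell/\Z_\ell$. As recalled in the text, $T$ is a free $\Z_\ell$-module of rank $b_2 - n$ carrying a continuous action of $\Ga_k$, $M\cong(\Q_\ell/\Z_\ell)^{b_2-n}$, and $T\otimes_{\Z_\ell}\Q_\ell = V_\ell(\Br(\ov X)) = H_{\Q_\ell}/N_{\Q_\ell}$, the last identity coming from tensoring $0\to N_\ell\to H_\ell\to T\to 0$ with $\Q_\ell$. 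The first step is the elementary observation that for any open subgroup $\Ga'\subset\Ga_k$ the group $M^{\Ga'}$ is finite if and only if $V_\ell(\Br(\ov X))^{\Ga'}=0$: passing to Tate modules commutes with taking $\Ga'$-invariants, so $T_\ell(M^{\Ga'}) = (T_\ell M)^{\Ga'} = T^{\Ga'}$; a $\Z_\ell$-submodule of $(\Q_\ell/\Z_\ell)^{b_2-n}$ is finite precisely when its Tate module vanishes; and $T^{\Ga'}=0$ if and only if $T^{\Ga'}\otimes_{\Z_\ell}\Q_\ell = V_\ell(\Br(\ov X))^{\Ga'}$ vanishes, since $T$ is torsion-free. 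As a finite subgroup of $(\Q_\ell/\Z_\ell)^{b_2-n}$ is annihilated by a power of $\ell$, it suffices to produce a single open subgroup $\Phi^*$ of $\gG(\Z_\ell)$, depending only on $X$, $m$ and $\ell$, with $\Phi^*\subseteq\rho_\ell(\Ga')$ for every open $\Ga'\subset\Ga_k$ of index at most $m$ and with $V_\ell(\Br(\ov X))^{\Phi^*}=0$.

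To verify the vanishing of invariants I would use the Mumford--Tate conjecture. As in the proof of Proposition~\ref{5.3} we may assume $k = k^{\rm conn}$ (at the cost of replacing $m$ by $m[k^{\rm conn}:k]$, which is harmless since the bound is allowed to depend on $X$). By the Mumford--Tate conjecture at~$\ell$ --- which follows from the integral Mumford--Tate conjecture assumed here --- together with the theorems of Bogomolov, Serre and Henniart, $\rho_\ell(\Ga_k)$ is an open subgroup of $\gG(\Z_\ell)$ of some finite index $C = C_\ell$. Hence any open finite-index subgroup of $\gG(\Z_\ell)$ --- for instance $\rho_\ell(\Ga')$ for $\Ga'$ open in $\Ga_k$ --- is $\ell$-adically open in $\gG(\Q_\ell)$ and therefore Zariski dense in the connected group $\gG_{\Q_\ell}$. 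Since $H_\Q$ is a semisimple $\gG_\Q$-module and $N_\Q = (H_\Q)^{\gG_\Q}$ by~\eqref{1,1}, the quotient $V_\ell(\Br(\ov X)) = (H_\Q/N_\Q)\otimes_\Q\Q_\ell$ is a $\gG_{\Q_\ell}$-module with no non-zero invariants; and the invariants of an algebraic representation of $\gG_{\Q_\ell}$ under a Zariski-dense subgroup coincide with the invariants under all of $\gG_{\Q_\ell}$. It follows that $V_\ell(\Br(\ov X))^{\Phi'}=0$ for every open finite-index subgroup $\Phi'$ of $\gG(\Z_\ell)$.

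For the uniformity in $\Ga'$, let $\Phi^*$ be the intersection of all open subgroups of $\gG(\Z_\ell)$ of index at most $Cm$. A compact $\ell$-adic Lie group is topologically finitely generated and hence has only finitely many open subgroups of each index, so $\Phi^*$ is a finite intersection of open subgroups and is therefore open of finite index in $\gG(\Z_\ell)$. If $\Ga'\subset\Ga_k$ is open of index at most $m$, then $\rho_\ell(\Ga')$ is a closed, hence open, subgroup of $\gG(\Z_\ell)$ of index at most $Cm$, so $\Phi^*\subseteq\rho_\ell(\Ga')$. By the previous paragraph $V_\ell(\Br(\ov X))^{\Phi^*}=0$, so $M^{\Phi^*}$ is finite; pick $a = a(X,m,\ell)$ with $\ell^a M^{\Phi^*}=0$. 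Then $M^{\Ga'}\subseteq M^{\Phi^*}$, so $\ell^a M^{\Ga'} = \ell^a\Br(\ov X)_{\rm div}\{\ell\}^{\Ga'} = 0$ for every such $\Ga'$, which is the claim.

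The step I expect to require the most care is the uniformity over $\Ga'$: since $\Ga_k$ will in general have infinitely many open subgroups of index $m$, one cannot argue one $\Ga'$ at a time, and the point is that after applying $\rho_\ell$ these subgroups all have index at most $Cm$ inside the topologically finitely generated group $\gG(\Z_\ell)$, so they all contain the single open subgroup $\Phi^*$. The reduction of the claim to $V_\ell(\Br(\ov X))^{\Ga'}=0$, and the computation of invariants of algebraic representations under Zariski-dense subgroups, are routine once this is in place.
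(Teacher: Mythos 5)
Your proposal is correct and follows essentially the same line as the paper: reduce the claim to the vanishing of $V_\ell(\Br(\ov X))^{\Ga'}$, use the Mumford--Tate conjecture plus Bogomolov--Serre--Henniart to see $\rho_\ell(\Ga_k)$ is open of finite index in $\gG(\Z_\ell)$, invoke Lazard's theory so that $\gG(\Z_\ell)$ is topologically finitely generated and has only finitely many open subgroups of each index, use Zariski density and semisimplicity together with \eqref{1,1} to kill the invariants, and finally observe the bound is uniform over $\Ga'$. The only cosmetic difference is in how the uniformity is packaged: the paper enumerates the finitely many subgroups $S = \rho_\ell(\Ga')$ of $\rho_\ell(\Ga_k)$ of index $\le m$ and takes the maximum of the resulting exponents, whereas you form the single open subgroup $\Phi^*$ as the (finite) intersection of all open subgroups of index $\le Cm$ and work with that; both steps rely on exactly the same input and are interchangeable.
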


Note that $\ell_0$ and $a$ do not depend on $\Ga'$ but only on the index $m=[\Ga_k:\Ga']$.

\begin{proof}[Proof of Claim~1]
Since the integral Mumford--Tate conjecture is true for $X$, 
there is a constant $C$ such that for all primes $\ell$ the image
$\rho_\ell(\Ga_k)$ is a subgroup of $\gG(\Z_\ell)$ of index at most $C$.
The isomorphism of $\Ga$-modules $\Br(\ov X)[\ell]=T_\ell(\Br(\ov X))/\ell$ shows that
to prove Claim 1, it is enough to prove
that there exists $\ell_0$ such 
that $(T_\ell(\Br(\ov X))/\ell)^S=0$ for any $\ell \geq \ell_0$ and any subgroup $S\subset\gG(\Z_\ell)$ of index at most~$mC$.

The generic fibre of $\gG\to\Spec(\Z)$ is the connected algebraic group $\gG_\Q$.
By \cite[Proposition 9.7.8]{EGAIV}
we can assume that $\ell_0$ is large enough so that for any prime $\ell\geq \ell_0$
the fibre $\gG\times_\Z\Spec(\F_\ell)$ is a connected algebraic group over $\F_\ell$.
We identify $H/N\cong\Z^r$ with the $\Z$-points of the affine space $\A^r_\Z$ over ${\rm Spec}(\Z)$.
Then $T_\ell(\Br(\ov X))/\ell=(H/N)/\ell$ is identified with $\A^r_\Z(\F_\ell)$.
The representation $\gG\to\gGL(H_\Q/N_\Q)$ extends to a natural action of $\gG$ 
on $\A^r_\Z$. 
Let us denote the corresponding morphism by
\[ \sigma \colon \gG\times\A^r_\Z\to\A^r_\Z. \]
Consider the morphism
\[ (\sigma,p_2) \colon \gG\times \A^r_\Z\lra \A^r_\Z\times \A^r_\Z. \]
Let $W$ be the closed subscheme of $\gG\times \A^r_\Z$ defined as 
the inverse image of the diagonal in $\A^r_\Z\times \A^r_\Z$.
The geometric fibres of $p_2 \colon W\to \A^r_\Z$ are the stabilisers of geometric points.
The geometric points $\bar x\in \A^r_\Z$ that are fixed by the action of $\gG$
are those for which the fibre of $p_2:W\to \A^r_\Z$ is the whole of
$\gG\times_\Z {\rm Spec}(k(\bar x))$.

We write $P$ for the product of all primes less than $\ell_0$.
Then $\gG\times_\Z\Spec(\F_\ell)$ is a connected algebraic group over $\F_\ell$
whenever $(\ell,P)=1$. Thus
a geometric point $\bar x\in \A^r_{\Z[1/P]}$ is a fixed point 
if and only if the dimension of its stabiliser is $d=\dim(\gG)$.

By \cite[Th\'eor\`eme~13.1.3]{EGAIV}, the dimension of the fibres of $p_2 \colon W \to \A^r_\Z$ is upper semi-continuous on the source $W$.
Since the identity section of $\gG \times \A^r_\Z \to \A^r_\Z$ factors through $W$, we can pull back by this section to conclude that the dimension of the fibres of $p_2 \colon W \to \A^r_\Z$ is upper semi-continuous on the target $\A^r_\Z$.
Therefore we have a closed subscheme $F\subset\A^r_{\Z[1/P]}$
defined as the scheme of points whose
stabilisers have maximal dimension~$d$.
The geometric points of $F$ are the fixed geometric points
for the action of $\gG\times_\Z\Z[1/P]$.

By \eqref{1,1} we have $(H_\C)^{\gG_\C}=N_\C$.
Since $H_\C$ is a semisimple $\gG_\C$-module, this implies
$(H_\C/N_\C)^{\gG_\C}=0$. Thus the generic fibre
$F_\Q\subset\A^r_\Q$ is one point $\{0\}$. The scheme $F$ has finite type, so it has only
finitely many irreducible components. 
Exactly one of them dominates $\Spec(\Z[1/P])$, namely,
the image of the section $\Spec(\Z[1/P]) \to \A^r_{\Z[1/P]}$ corresponding to the origin in $\A^r(\Z[1/P])$.
Let us write $\A^r_{\Z[1/P]}\setminus\{0\}$ for the complement to the 
image of this section.
Other irreducible components of $F$ do not meet the generic fibre 
$F_\Q$, so they are contained in the fibres of the structure morphism $F\to {\rm Spec}(\Z[1/P])$.
Therefore, after increasing $\ell_0$ we can assume that
the stabiliser of every geometric point $\bar x\in \A^r_{\Z[1/P]}\setminus\{0\}$ is a subgroup
of $\gG\times_\Z {\rm Spec}(k(\bar x))$ of dimension at most $d-1$.

Let $W'$ be the inverse image of $\A^r_{\Z[1/P]}\setminus\{0\}$
in $W\times_\Z\Z[1/P]$, and let 
\[ \pi \colon  W'\lra \A^r_{\Z[1/P]}\setminus\{0\} \]
be the natural projection. The number of geometric connected components of the 
fibres of $\pi$ is a constructible function \cite[Corollaire 9.7.9]{EGAIV}, hence
there exists a constant $h$ such that for any
$M\in \A^r_{\Z[1/P]}(\F_\ell)$, $M\not=0$, the fibre $W_M=\pi^{-1}(M)$
has at most $h$ geometric connected components.
By a result of Nori \cite[Lemma 3.5]{N87},
the number of $\F_\ell$-points of a connected algebraic group $G$ 
over $\F_\ell$ satisfies
\[ (\ell-1)^{\dim(G)}\leq |G(\F_\ell)|\leq (\ell+1)^{\dim(G)}. \]
By the choice of $\ell_0$ each geometric fibre of $\pi$ has dimension at most $d-1$.
Thus $|W_M(\F_\ell)|\leq h(\ell+1)^{d-1}$. On the other hand, we have
$|\gG(\F_\ell)|\geq (\ell-1)^d$.
After increasing $\ell_0$ we obtain that there exists an $\varepsilon>0$ such that
for any prime $\ell\geq\ell_0$ 
and any $\F_\ell$-point $M\in \A^r_{\Z[1/P]}$, $M\not=0$, the index of the stabiliser of $M$ in
$\gG(\F_\ell)$ is greater than $\varepsilon\ell$. Take $\ell_0>\varepsilon^{-1}mC$.
Then no non-zero point
of $T_\ell(\Br(\ov X))/\ell$ is fixed by a subgroup $S\subset\gG(\F_\ell)$ of index at most $mC$,
hence $(T_\ell(\Br(\ov X))/\ell)^S=0$. This finishes the proof of Claim 1.
\end{proof}

\begin{proof}[Proof of Claim~2]
We now fix $\ell$. By the Mumford--Tate conjecture 
$\rho_\ell(\Ga)$ is a subgroup of finite index in $\gG(\Z_\ell)$.
Since $\gG(\Z_\ell)$ is a compact $\ell$-adic analytic Lie group, by Lazard's theory
it is a topologically finitely generated profinite group (see, for example, \cite[Corollary 9.36]{DdSMS}).
Then $\gG(\Z_\ell)$ has only finitely many open subgroups of fixed index
\cite[Proposition 1.6]{DdSMS}.
Thus $\rho_\ell(\Ga)$ has only finitely many subgroups $S$ of index at most $m$.
It suffices to show that $\Br(\ov X)\{\ell\}^S$ is finite for each of these subgroups~$S$.

It is well known that 
$\Br(\ov X)_{\rm div}\{\ell\}^S$ is finite if $V_\ell(\Br(\ov X))^S=0$.
Indeed, if $\Br(\ov X)\{\ell\}^S$ is infinite, then $\Br(\ov X)$ has an $S$-stable element
of order $\ell^n$ for each positive integer $n$.
The limit of a projective system of non-empty finite sets is non-empty.
Applying this to the limit of the sets of elements of order $\ell^n$
in $\Br(\ov X)^S$ we obtain a non-zero element of $T_\ell(\Br(\ov X))^S$, hence
a non-zero element of $V_\ell(\Br(\ov X))^S$.

We claim that we have the following equalities
\[ V_\ell(\Br(\ov X))^S=(H_{\Q_\ell}/N_{\Q_\ell})^S=(H_{\Q_\ell}/N_{\Q_\ell})^{\gG_{\Q_\ell}}
=(H_{\Q_\ell})^{\gG_{\Q_\ell}}/N_{\Q_\ell}=0. \]
The second one is due to the fact that $S$ is a Zariski dense subset of 
the algebraic group $\gG_{\Q_\ell}$. 
Since $\gG_{\Q_\ell}$ is reductive, the $\gG_{\Q_\ell}$-module $H_{\Q_\ell}$ is semisimple,
and this implies the third equality. The last equality follows from \eqref{1,1}.
This proves Claim~2, and so finishes the proof of Theorem~\ref{brauer-forms-bound}.
\end{proof}

\smallskip

\noindent{\bf Remark.} The same proof can be used to prove the following statement.
Let $A$ be an abelian variety over a field $k$ finitely generated over $\Q$ for which the
classical Mumford--Tate conjecture holds at a prime $\ell$. 
For each positive integer $n$ there exists a constant $C = C_{n,A}$ such that for every
abelian variety $B$ over a field $L$, where $k\subset L\subset\bar k$ and $[L:k]\leq n$, 
if $B$ is a $(\bar k/L)$-form of $A$, then \( \lvert B(L)_{\rm tors} \rvert < C \).

\bigskip

\noindent Department of Mathematics, South Kensington Campus,
Imperial College London, SW7 2BZ England, U.K. 

\medskip

\noindent {\tt m.orr@imperial.ac.uk}

\bigskip

\noindent Department of Mathematics, South Kensington Campus,
Imperial College London, SW7 2BZ England, U.K. -- and --
Institute for the Information Transmission Problems,
Russian Academy of Sciences, 19 Bolshoi Karetnyi, Moscow, 127994
Russia

\medskip

\noindent {\tt a.skorobogatov@imperial.ac.uk}

\end{document}